\numberwithin{equation}{section}
\newtheorem{proposition}{Proposition}[section]
\newtheorem{thm}[proposition]{Theorem}
\newtheorem{lemma}[proposition]{Lemma}
\begin{document}

\title[Convergence of the random walk loop-soup clusters to CLE]
{Convergence of the two-dimensional random walk loop-soup clusters to CLE}
\author{Titus Lupu}
\address {CNRS and LPSM, UMR 8001,
Sorbonne Université,
4 place Jussieu,
75252 Paris cedex 05,
France}
\email{titus.lupu@upmc.fr}
\keywords{Conformal loop ensemble; Gaussian free field; loop-soup; metric graph; Poisson ensemble of Markov loops}
\subjclass[2010]{Primary  60G15; 60J67; 60K35; 82B20; Secondary  82B27}

\begin{abstract}
We consider the random walk loop-soup of sub-critical intensity parameter on the discrete half-plane $\mathtt{H}:=\mathbb{Z}\times\mathbb{N}$. We look at the clusters of discrete loops and show that the scaling limit of the outer boundaries of outermost clusters is a $\hbox{CLE}_{\kappa}$ Conformal loop ensemble.
\end{abstract}

\maketitle

\section{Introduction}
\label{secIntro}

One can naturally associate to a wide class of Markov processes an infinite measure on time-parametrised loops. Roughly speaking, given a locally compact second-countable space $S$, a Markov process $(X_{t})_{0\leq t<\zeta}$ on 
$S$, defined up to a killing time $\zeta\in (0,+\infty]$, 
with transition densities $p_{t}(x,y)$ with respect some $\sigma$-finite
measure $m(dy)$, incorporating the killing if there is one, and with bridge probability measures $\mathbb{P}^{t}_{x,y}(\cdot)$, where the bridges are conditioned on $\zeta >t$, the loop measure associated to $X$ is
\begin{equation}
\label{DefLoopMeasure}
\mu(\cdot)=\int_{x\in S}\int_{t>0}\mathbb{P}^{t}_{x,x}(\cdot)p_{t}(x,x)\dfrac{dt}{t}m(dx).
\end{equation}
See \cite{LeJanMarcusRosen2012Loops} for the precise setting and definition. A \textit{Poisson ensemble of Markov loops} or \textit{loop-soup} of intensity parameter $\alpha >0$ is a Poisson point process of loops of intensity $\alpha\mu$. It is a random countable collection of loops. 
These loop-soups satisfy some universal properties, one of which is the relation to the Gaussian free field at intensity parameter $\alpha=1/2$ \cite{LeJan2011Loops,Lupu2014LoopsGFF}.
We will deal with the clusters of loops. Two loops $\gamma$ and $\gamma'$ in a loop-soup belong to the same cluster if there is a chain of loops $\gamma_{0},\dots,\gamma_{j}$ such that $\gamma_{0}=\gamma$, 
$\gamma_{j}=\gamma'$ and $\gamma_{i}$ and $\gamma_{i-1}$ visit a common point in $S$.

We will consider loop-soups in three different settings.
In the first one, on the continuum half-plane $\mathbb{H}=\lbrace\Im(z)>0\rbrace\subset\mathbb{C}$, we will consider the loop-soups associated to the Brownian motion on $\mathbb{H}$ killed at the first hitting time of the boundary $\mathbb{R}$ and denote them
$\mathcal{L}_{\alpha}^{\mathbb{H}}$. These two-dimensional Brownian loop-soups were introduced by Lawler and Werner in \cite{LawlerWerner2004ConformalLoopSoup} and used by Sheffield and Werner in \cite{SheffieldWerner2012CLE} to give a construction of Conformal loop ensembles ($\hbox{CLE}$).
In \eqref{DefLoopMeasure} we use the same normalisation of the loop measure as in 
\cite{LawlerWerner2004ConformalLoopSoup}, \cite{SheffieldWerner2012CLE}, \cite{LeJan2011Loops} or \cite{LawlerFerreras2007RWLoopSoup}. However, contrary to what is claimed in \cite{SheffieldWerner2012CLE}, the intensity parameter $\alpha$ is not equal to the \textit{central charge} $c$. The central charge is a notion that comes from Conformal Field Theory and representations of Virasoro algebra. Actually,
\begin{displaymath}
\alpha=\dfrac{c}{2}.
\end{displaymath}
The $1/2$ factor was pointed out by Werner in a private communication. It also appears in Lawler's work \cite{Lawler2009PartFuncSLELoopMes}.
The confusion originates from the article
\cite{LawlerWerner2004ConformalLoopSoup}. There the authors consider a Brownian loop
soup in the half-plane and a continuous path cutting the half-plane, parametrised by
the half-plane capacity.  For such a path the half-plane capacity at time
$t$
equals
$2t$
.  It
discovers progressively new Brownian loops and the authors map these loops conformally
to the origin. In Theorem
1
they identify the processes of these conformally mapped
Brownian loops to be a Poisson point process with intensity proportional to the Brownian
bubble measure. In the identification of the intensity there is a factor
2 missing. Actually, in
the article \cite{LawlerWerner2004ConformalLoopSoup}, 
Theorem 1 is inconsistent with Proposition 11.

In the second setting, on the discrete rescaled half-plane 
\begin{displaymath}
\mathtt{H}_{n}:=\left(\dfrac{1}{n}\mathbb{Z}\right)\times\left(\dfrac{1}{n}\mathbb{N}\right),
\end{displaymath}
we will consider the loop-soups associated to the nearest neighbours Markov jump process with uniform transition rates and killed at the first hitting time of the boundary 
$\frac{1}{n}\mathbb{Z}\times\lbrace 0\rbrace$. 
We will denote these loop-soups 
$\mathcal{L}_{\alpha}^{\mathtt{H}_{n}}$. 
The loop-soups associated to Markov jump processes on more general electrical networks were studied by Le Jan in \cite{LeJan2011Loops}. 
If one forgets the parametrisation by continuous time and the "loops" that visit only one vertex, these are exactly the random walk loop-soups studied by Lawler and Trujillo-Ferreras in \cite{LawlerFerreras2007RWLoopSoup}. See also \cite{LawlerLimic2010RW}, Section 9.

In the third setting, we will use the metric (or cable) graphs 
$\widetilde{\mathtt{H}}_{n}$ associated to $\mathtt{H}_{n}$: each 
"discrete" edge $\lbrace (\frac{i}{n},\frac{j}{n}),(\frac{i+1}{n},\frac{j}{n})\rbrace$ or
$\lbrace (\frac{i}{n},\frac{j}{n}),(\frac{i}{n},\frac{j+1}{n})\rbrace$ is replaced by a continuous line of length $\frac{1}{n}$. Let 
$(B^{\widetilde{\mathtt{H}}_{n}}_{t})_{0\leq t<\zeta_{n}}$ be the Brownian motion on
$\widetilde{\mathtt{H}}_{n}$ (cable process) killed at reaching the boundary, that is to say the vertices 
$\frac{1}{n}\mathbb{Z}\times \lbrace 0\rbrace$ 
and all the lines joining $(\frac{i}{n},0)$ to $(\frac{i+1}{n},0)$.
One can find a construction of
$(B^{\widetilde{\mathtt{H}}_{n}}_{t})_{0\leq t<\zeta_{n}}$
in \cite{Lupu2014LoopsGFF}.
Inside each line segment, $B^{\widetilde{\mathtt{H}}_{n}}_{t}$ evolves
like a one-dimensional Brownian motion. After reaching a vertex, the process makes Brownian excursions in each of the four possible directions before hitting the next vertex. Each direction has an equal rate.
$(B^{\widetilde{\mathtt{H}}_{n}}_{2t})_{0\leq t<\zeta_{n}/2}$ converges in law to the Brownian motion on the half-plane $\mathbb{H}$ killed at reaching $\mathbb{R}$. We will denote by $\mathcal{L}_{\alpha}^{\widetilde{\mathtt{H}}_{n}}$ the loop-soups associated to $(B^{\widetilde{\mathtt{H}}_{n}}_{t})_{0\leq t<\zeta_{n}}$. The loop-soups on metric graphs were first considered
in \cite{Lupu2014LoopsGFF}. We will use metric graphs because at intensity parameter $\alpha=1/2$ the probability that two points belong to the same cluster of loops can be explicitly expressed using a metric graph Gaussian free field. Indeed, the clusters of loops are then exactly the sign clusters of the Gaussian free field
\cite{Lupu2014LoopsGFF}.

The discrete loops $\mathcal{L}_{\alpha}^{\mathtt{H}_{n}}$ can be deterministically recovered from the metric graph loops
$\mathcal{L}_{\alpha}^{\widetilde{\mathtt{H}}_{n}}$. The first are the trace on the vertices of the latter. In particular each cluster of $\mathcal{L}_{\alpha}^{\mathtt{H}_{n}}$ is contained in a cluster of $\mathcal{L}_{\alpha}^{\widetilde{\mathtt{H}}_{n}}$, but the clusters of $\mathcal{L}_{\alpha}^{\widetilde{\mathtt{H}}_{n}}$ may be strictly larger \cite{Lupu2014LoopsGFF}.

$c=1$ is the critical central charge for the Brownian loop percolation on $\mathbb{H}$ (or any other simply connected proper subset of $\mathbb{C}$). This means that the critical intensity parameter is $\alpha=1/2$. For $\alpha>1/2$, $\mathcal{L}_{\alpha}^{\mathbb{H}}$ has only one cluster everywhere dense in $\mathbb{H}$. If
$\alpha\in(0,1/2]$, there are infinitely many clusters and each is bounded \cite{SheffieldWerner2012CLE}. 
$\alpha=1/2$ is also the critical intensity parameter for the existence of an unbounded cluster of loops on discrete or metric graph half-plane $\mathtt{H}_{n}$ respectively
$\widetilde{\mathtt{H}}_{n}$ 
\cite{Lupu2014LoopsHalfPlane, Lupu2014LoopsGFF}.
In all three settings, for $\alpha\in(0,1/2]$, we will consider the collection of outer boundaries of outermost clusters (not surrounded by any other cluster) and denote it $\mathcal{F}_{\rm ext}(\mathcal{L}^{S}_{\alpha})$, where $S$ is $\mathbb{H}$, $\mathtt{H}_{n}$ or $\widetilde{\mathtt{H}}_{n}$. Next we give the formal definition of 
$\mathcal{F}_{\rm ext}(\mathcal{L}^{S}_{\alpha})$. We consider the set of all points in $\mathbb{H}$ visited by a loop in
$\mathcal{L}^{S}_{\alpha}$ and take its complement in $\mathbb{H}$. This complement has only one unbounded connected component. We take the boundary in $\mathbb{H}$ of this connected component (by definition it does not intersect $\mathbb{R}$). The elements of $\mathcal{F}_{\rm ext}(\mathcal{L}^{S}_{\alpha})$ are the connected components of this
boundary. We will call the elements of $\mathcal{F}_{\rm ext}(\mathcal{L}^{S}_{\alpha})$
\textit{contours}. The contours are pairwise disjoint and non nested.
See Figure \ref{CEClusters} for a representation of 
$\mathcal{F}_{\rm ext}(\mathcal{L}^{{\widetilde{\mathtt{H}}_{n}}}_{\alpha})$.

\begin{figure}[ht]
\begin{center}
  \includegraphics[scale=0.3]{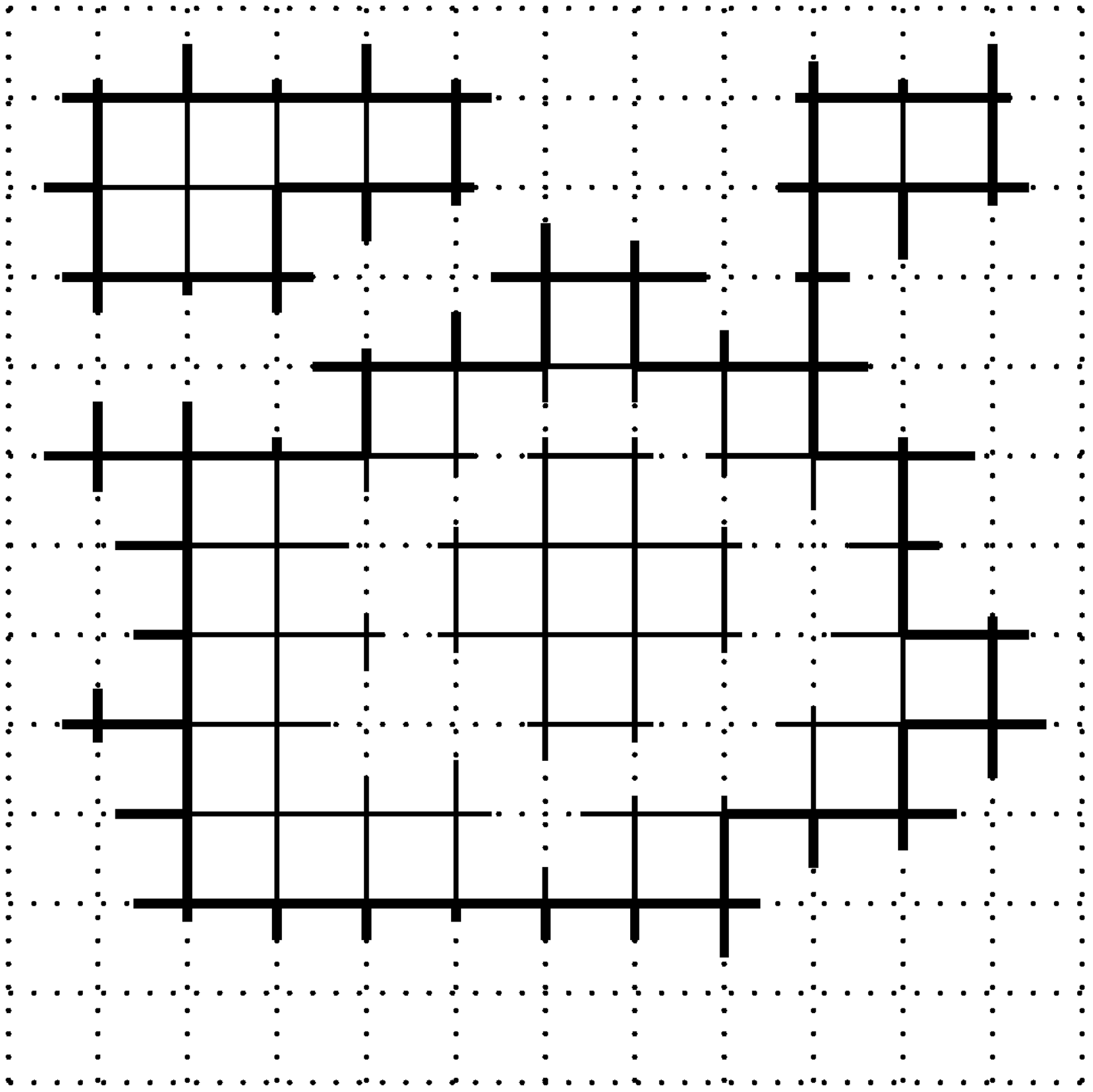}
\end{center}
  \caption{Illustration of three clusters (thin full lines) of 
$\mathcal{L}^{{\widetilde{\mathtt{H}}_{n}}}_{\alpha}$,
two of them being external and one being surrounded.
The thick lines represent the elements of 
$\mathcal{F}_{\rm ext}(\mathcal{L}^{{\widetilde{\mathtt{H}}_{n}}}_{\alpha})$.}
  \label{CEClusters}
\end{figure}

The contours in $\mathcal{F}_{\rm ext}(\mathcal{L}^{\mathbb{H}}_{\alpha})$, $\alpha\in(0,1/2]$, are non self-intersecting loops, and are equal in law to a Conformal loop ensemble $\hbox{CLE}_{\kappa}$,
$\kappa\in(8/3,4]$ \cite{SheffieldWerner2012CLE}.
The relation between $\alpha$ and $\kappa$ is given by
\begin{equation}
\label{ckappa}
2\alpha=c=\dfrac{(3\kappa-8)(6-\kappa)}{2\kappa}.
\end{equation}
We will denote by $\kappa(\alpha)$ the value of $\kappa$ corresponding to a particular intensity parameter $\alpha$.

We will show that both $\mathcal{F}_{\rm ext}(\mathcal{L}^{{\mathtt{H}}_{n}}_{\alpha})$
and $\mathcal{F}_{\rm ext}(\mathcal{L}^{{\widetilde{\mathtt{H}}_{n}}}_{\alpha})$ converge in law to
$\mathcal{F}_{\rm ext}(\mathcal{L}^{\mathbb{H}}_{\alpha})\stackrel{(d)}{=}\hbox{CLE}_{\kappa(\alpha)}$ for
$\alpha\in(0,1/2]$. Observe that $\kappa(1/2)=4$ and 
$\mathcal{F}_{\rm ext}(\mathcal{L}^{{\widetilde{\mathtt{H}}_{n}}}_{1/2})$ and
$\hbox{CLE}_{4}$ are both related to the Gaussian free field. 
$\mathcal{F}_{\rm ext}(\mathcal{L}^{{\widetilde{\mathtt{H}}_{n}}}_{1/2})$ is the collection of outer boundaries of outermost sign clusters of a GFF on the metric graph $\widetilde{\mathtt{H}}_{n}$ \cite{Lupu2014LoopsGFF} and the $\hbox{CLE}_{4}$ loops are in some sense zero level lines of the continuum GFF on $\mathbb{H}$ with zero boundary conditions on
$\mathbb{R}$ \cite{MillerSheffieldCLE4GFF,WangWu2014LvlLineGFF,ASW2017BTLS,
SchrammSheffield2009ContourDiscrGFF,SchrammSheffield2013ContourContGFF}.

Next we define the notion of convergence we will use. $d_{H}$ will be Hausdorff distance on the compact subsets of $\mathbb{H}$. We introduce the distance $d^{\ast}_{H}$ between finite collections of compact subsets of $\mathbb{H}$:
\begin{displaymath}
d^{\ast}_{H}(\mathcal{K},\mathcal{K}')=
\left\lbrace
\begin{array}{ll}
+\infty & \text{if}~\vert\mathcal{K}\vert\neq\vert\mathcal{K}'\vert, \\ 
\min_{\sigma\in \mathrm{Bij}(\mathcal{K},\mathcal{K}')}\max_{K\in \mathcal{K}}d_{H}(K,\sigma(K)) & \text{otherwise},
\end{array}   
\right.
\end{displaymath}
where $\mathcal{K}$ and $\mathcal{K}'$ are finite collections of compact subsets and $\mathrm{Bij}(\mathcal{K},\mathcal{K}')$ is the set of all bijections from $\mathcal{K}$ to $\mathcal{K}'$.
Given $z\in\mathbb{H}$, we will denote by
\begin{displaymath}
\mathcal{F}_{\rm ext}(\mathcal{L}^{S}_{\alpha})(z)
\end{displaymath}
the contour of $\mathcal{F}_{\rm ext}(\mathcal{L}^{S}_{\alpha})$ that contains or surrounds $z$, whenever it exists. It exists a.s.\ in the case $S=\mathbb{H}$. Given $z_{1},\dots,z_{j}\in\mathbb{H}$, we will denote
\begin{displaymath}
\mathcal{F}_{\rm ext}(\mathcal{L}^{S}_{\alpha})[z_{1},\dots,z_{j}]:=
\lbrace \mathcal{F}_{\rm ext}(\mathcal{L}^{S}_{\alpha})(z_{i})\vert 1\leq i\leq j\rbrace.
\end{displaymath}
By the convergence in law of $\mathcal{F}_{\rm ext}(\mathcal{L}^{{\mathtt{H}}_{n}}_{\alpha})$ and
$\mathcal{F}_{\rm ext}(\mathcal{L}^{{\widetilde{\mathtt{H}}_{n}}}_{\alpha})$ to
$\mathcal{F}_{\rm ext}(\mathcal{L}^{\mathbb{H}}_{\alpha})$ we mean that for any $z_{1},\dots,z_{j}\in\mathbb{H}$,
$\mathcal{F}_{\rm ext}(\mathcal{L}^{{\mathtt{H}}_{n}}_{\alpha})[z_{1},\dots,z_{j}]$ and 
$\mathcal{F}_{\rm ext}(\mathcal{L}^{{\widetilde{\mathtt{H}}_{n}}}_{\alpha})[z_{1},\dots,z_{j}]$ converge in law
to $\mathcal{F}_{\rm ext}(\mathcal{L}^{\mathbb{H}}_{\alpha})[z_{1},\dots,z_{j}]$ for the distance
$d^{\ast}_{H}$.

So, the main result in this article is the following.
\begin{thm}
\label{ThmConvAll0} 
Let $\alpha\in (0,1/2]$.
$\mathcal{F}_{\rm ext}(\mathcal{L}_{\alpha}^{\mathtt{H}_{n}})$ and 
$\mathcal{F}_{\rm ext}(\mathcal{L}_{\alpha}^{\widetilde{\mathtt{H}}_{n}})$ converge in law (in the above defined sense) as
$n\rightarrow +\infty$ to $\mathcal{F}_{\rm ext}(\mathcal{L}_{\alpha}^{\mathbb{H}})$, that is to say to a 
$\hbox{CLE}_{\kappa(\alpha)}$ on $\mathbb{H}$.
\end{thm}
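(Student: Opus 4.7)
The plan is to pass from continuum to discrete through the metric graph. The three main ingredients are: convergence of the loop-soups themselves as point processes of loops \cite{LawlerFerreras2007RWLoopSoup}, the Sheffield--Werner identification of $\hbox{CLE}_{\kappa(\alpha)}$ with the collection of outermost cluster boundaries of $\mathcal{L}_\alpha^{\mathbb{H}}$ \cite{SheffieldWerner2012CLE}, and, for the critical value $\alpha=1/2$, the metric-graph GFF coupling of \cite{Lupu2014LoopsGFF}. I would first prove the metric-graph convergence $\mathcal{F}_{\rm ext}(\mathcal{L}_\alpha^{\widetilde{\mathtt{H}}_n})\to\mathcal{F}_{\rm ext}(\mathcal{L}_\alpha^{\mathbb{H}})$ and then deduce the vertex-graph convergence.

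The first step is a coupling with a diameter cutoff. By the invariance principle applied to the cable Brownian motion, together with the Lawler--Trujillo Ferreras coupling of the random walk loop-soup to the Brownian loop-soup, for each $\epsilon>0$ I can couple $\mathcal{L}_\alpha^{\widetilde{\mathtt{H}}_n}$ and $\mathcal{L}_\alpha^{\mathbb{H}}$ so that the subcollections of loops of diameter at least $\epsilon$ agree up to a Hausdorff error going to zero in probability as $n\to\infty$. Since only finitely many loops are involved, the cluster structure on this subfamily and its outermost boundaries through any fixed marked points $z_1,\dots,z_j$ are continuous in the loops, so they converge in the metric $d^{\ast}_H$.

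The core step is removing the cutoff: one must show, uniformly in $n$, that as $\epsilon\to 0$ the outermost contour through $z_i$ built from loops of diameter at least $\epsilon$ converges to the true outermost contour, both in the lattice and in the continuum. On the continuum side this is essentially built into the Sheffield--Werner construction. On the lattice side, the key estimate is that with probability tending to one, no chain of loops of diameter below $\epsilon$ connects two otherwise disjoint macroscopic clusters. For subcritical $\alpha<1/2$ this should follow from a crossing-type argument exploiting the subcriticality of the loop soup. At the critical intensity $\alpha=1/2$ I would instead invoke \cite{Lupu2014LoopsGFF}: the clusters of $\mathcal{L}_{1/2}^{\widetilde{\mathtt{H}}_n}$ are the sign clusters of the metric-graph GFF, which converge together with their outermost sign boundaries to the sign clusters of the continuum GFF, identified with $\hbox{CLE}_4$ in \cite{MillerSheffieldCLE4GFF,SchrammSheffield2013ContourContGFF}.

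To pass from $\mathcal{L}_\alpha^{\widetilde{\mathtt{H}}_n}$ to $\mathcal{L}_\alpha^{\mathtt{H}_n}$, observe that every cluster of the vertex-graph loop-soup sits inside a cluster of the metric-graph one, so the vertex contours are a priori nested inside the metric contours. The extra gluings on the metric graph come exactly from Brownian excursions along a single cable segment of length $1/n$ that connect two vertex-clusters sharing no common vertex; a direct analysis of such excursions should show that the probability of creating a macroscopic extra gluing vanishes as $n\to\infty$, so the two contour collections become $d^{\ast}_H$-close. The main obstacle in the whole programme is the continuity-in-$\epsilon$ step at the critical intensity $\alpha=1/2$: the potential merging of macroscopic clusters by very small loops is a delicate near-critical percolation question, and even though the GFF representation of \cite{Lupu2014LoopsGFF} converts it into the essentially standard question of continuity of sign clusters of a nondegenerate random field, obtaining uniform-in-$n$ control up to the boundary $\mathbb{R}$ will be the most delicate part of the argument.
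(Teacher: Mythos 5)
Your overall architecture (cutoff coupling for a lower bound, metric graph as intermediary, sandwich between vertex-graph and metric-graph contours) matches the paper's first step, but the two places where you defer to ``a crossing-type argument'' and to ``invoking'' the GFF literature are exactly where the actual difficulty lives, and neither deferral works. First, for $\alpha<1/2$ there is no usable subcriticality: $\alpha=1/2$ is the critical intensity for loop-soup percolation, and for every $\alpha\in(0,1/2]$ the clusters are macroscopic with conformally invariant scaling limits, so no exponential-decay or RSW-type estimate is available at the discrete level to rule out microscopic loops merging two macroscopic clusters. The paper handles $\alpha<1/2$ by a completely different route: it writes $\mathcal{L}_{1/2}=\mathcal{L}_{\alpha}\cup\mathcal{L}_{\bar{\alpha}}$ with independent pieces and shows (Lemma \ref{LemPositivity}, Theorem \ref{ThmConvAll}) that if the $\alpha$-contours had a limit strictly larger than $\hbox{CLE}_{\kappa(\alpha)}$, superposing the independent $\bar{\alpha}$-soup would force the $1/2$-contours to exceed $\hbox{CLE}_{4}$, contradicting the critical case. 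Second, at $\alpha=1/2$ your appeal to ``sign clusters of the metric-graph GFF converge to sign clusters of the continuum GFF, identified with $\hbox{CLE}_{4}$'' is circular: the continuum GFF is a distribution with no pointwise sign, and the convergence of the metric-graph sign-cluster boundaries to $\hbox{CLE}_{4}$ is precisely the statement being proved here, not a known input.

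What the paper actually does with the isomorphism of \cite{Lupu2014LoopsGFF} is much more concrete: it computes an exact finite-$n$ formula for a single observable, namely the probability $p^{\widetilde{\mathtt{H}}_{n}}_{1/2,u,v}(q)$ that two independent Poisson ensembles of boundary excursions are linked through a loop cluster (Lemma \ref{LemProbNoConx}, Proposition \ref{PropProbMetrGraph}, giving $1-q^{-2\sqrt{uv}}$ in the limit), and independently computes the continuum analogue $p^{\mathbb{H}}_{1/2,u_{0}(1/2),v}(q)$ by identifying the relevant cluster envelope with an $\hbox{SLE}_{4}$ and solving a martingale ODE (Lemmas \ref{LemSLE}--\ref{LemDiffEq}, Proposition \ref{PropConvP}). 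The two answers coincide; since the cutoff loops already force the limit contours to be at least $\hbox{CLE}_{4}$, any strictly larger limit would make the discrete connection probability strictly exceed the continuum one, a contradiction (Theorem \ref{ThmConvOneHalf}). You need an argument of this one-observable-matching type (or some other genuine upper bound); as written, your ``core step'' of removing the cutoff is asserted rather than proved, for both the critical and the subcritical intensities.
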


In the article \cite{BrugCamiaLis2014RWLoopsCLE} Van de Brug, Camia and Lis consider clusters of rescaled two-dimensional random walk loops that are not too small. Given $T>0$ let $\mathcal{L}_{\alpha}^{\mathtt{H}_{n},T}$ be the
subset of $\mathcal{L}_{\alpha}^{\mathtt{H}_{n}}$ consisting of random walk loops that do at least $T$ jumps.
In \cite{BrugCamiaLis2014RWLoopsCLE} it is almost shown that for 
$\theta\in(16/9,2)$ and $\alpha\in(0,1/2]$, 
$\mathcal{F}_{\rm ext}(\mathcal{L}_{\alpha}^{\mathtt{H}_{n},n^{\theta}})$ converges in law to a $\hbox{CLE}_{\kappa(\alpha)}$ process in the sense described previously. The result uses the approximation of "not too small" Brownian loops by "not too small" random walk loops obtained by Lawler and Trujillo-Ferreras in \cite{LawlerFerreras2007RWLoopSoup}. However the authors in \cite{BrugCamiaLis2014RWLoopsCLE} consider the loop-soups only on bounded domains. 
In the present paper, we will extend their result by removing the cutoff on microscopic loops (and also consider the case of unbounded domains).
Actually, the "microscopic" loops that are thrown away in \cite{BrugCamiaLis2014RWLoopsCLE} create additional connections and may merge
large clusters. So the point is to show that this happens with a probability converging to $0$ and the contribution of microscopic loops does not change the picture at macroscopic level.
Observe that in \cite{BrugCamiaLis2014RWLoopsCLE} the authors use the same normalisation of the measure on loops as we do but with the widespread confusion about the factor $2$ in the intensity of loop-soups.

From above considerations one deduces that the contours obtained in the limit from
$\mathcal{F}_{\rm ext}(\mathcal{L}_{\alpha}^{\mathtt{H}_{n}})$ and \textit{a fortiori} from
$\mathcal{F}_{\rm ext}(\mathcal{L}_{\alpha}^{\widetilde{\mathtt{H}}_{n}})$ are "at least as big as" 
$\hbox{CLE}_{\kappa(\alpha)}$ loops. We thus have a "lower bound". To conclude the convergence we need an "upper bound". We will prove Theorem \ref{ThmConvAll0}
in two steps.
First, we will construct an "upper bound" for 
$\mathcal{F}_{\rm ext}(\mathcal{L}_{1/2}^{\widetilde{\mathtt{H}}_{n}})$ and deduce the convergence to $\hbox{CLE}_{4}$ of 
$\mathcal{F}_{\rm ext}(\mathcal{L}_{1/2}^{\widetilde{\mathtt{H}}_{n}})$ and
$\mathcal{F}_{\rm ext}(\mathcal{L}_{1/2}^{\mathtt{H}_{n}})$. Then from this we will deduce the desired convergences for $\alpha\in(0,1/2)$.
For this, we will divide the loop-soup of intensity $1/2$ in two independent
loop-soups of respective intensities $\alpha$ and $\bar{\alpha}$,
with $\alpha+\bar{\alpha}=1/2$.
If the scaling limit of 
$\mathcal{F}_{\rm ext}(\mathcal{L}_{\alpha}^{\widetilde{\mathtt{H}}_{n}})$
happens to contain contours "strictly larger" than $\hbox{CLE}_{\kappa(\alpha)}$,
then the additional independent contribution of 
$\mathcal{L}_{\bar{\alpha}}^{\widetilde{\mathtt{H}}_{n}}$
would give in the scaling limit of
$\mathcal{F}_{\rm ext}(\mathcal{L}_{1/2}^{\widetilde{\mathtt{H}}_{n}})$ contours
"strictly larger" than $\hbox{CLE}_{4}$, and this would contradict the first step.

Next we explain how the "upper bound" in the critical case
$\alpha=1/2$ will be constructed.
We additionally introduce two Poisson point processes of excursions on 
$\widetilde{\mathtt{H}}_{n}$ and on $\mathbb{H}$. First we consider $\widetilde{\mathtt{H}}_{n}$. 
Let $x\in \frac{1}{n}\mathbb{Z}_{-}\times \lbrace 0\rbrace$, where 
$\mathbb{Z}_{-}$ includes $0$.
Let ${\nu_{\rm exc}^{\widetilde{\mathtt{H}}_{n}}(x\rightarrow (-\infty,0])}$ be the measure on excursions of the metric graph Brownian motion $B^{\widetilde{\mathtt{H}}_{n}}$ from $x$ to a point in 
$\frac{1}{n}\mathbb{Z}_{-}\times \lbrace 0\rbrace$. It is defined as follows: Let 
$\mathbb{P}_{x+i\varepsilon}^{\widetilde{\mathtt{H}}_{n}}(\cdot, 
B^{\widetilde{\mathtt{H}}_{n}}_{\zeta_{n}^{-}}\in \frac{1}{n}\mathbb{Z}_{-}\times \lbrace 0\rbrace)$ be the law of a sample path of $B^{\widetilde{\mathtt{H}}_{n}}$, started at $x+i\varepsilon$, restricted to the event 
$B^{\widetilde{\mathtt{H}}_{n}}_{\zeta_{n}^{-}}\in \frac{1}{n}\mathbb{Z}_{-}\times \lbrace 0\rbrace$ (we do not condition and the total mass is $<1$). Then
\begin{displaymath}
\nu_{\rm exc}^{\widetilde{\mathtt{H}}_{n}}(x\rightarrow (-\infty,0])=
\lim_{\varepsilon\rightarrow 0}\dfrac{1}{\varepsilon}
\mathbb{P}_{x+i\varepsilon}^{\widetilde{\mathtt{H}}_{n}}\left(\cdot, 
B^{\widetilde{\mathtt{H}}_{n}}_{\zeta_{n}^{-}}\in \frac{1}{n}\mathbb{Z}_{-}\times \lbrace 0\rbrace\right).
\end{displaymath}
Let $q\in(1,+\infty)$ and  $x\in ((\frac{1}{n}\mathbb{Z})\cap[1,q])\times \lbrace 0\rbrace$. We will similarly denote by
${\nu_{\rm exc}^{\widetilde{\mathtt{H}}_{n}}(x\rightarrow [1,q])}$ the measure on excursions from $x$ to
$((\frac{1}{n}\mathbb{Z})\cap[1,q])\times \lbrace 0\rbrace$. Let
\begin{equation}
\label{DefExcInfty0}
\nu_{\rm exc}^{\widetilde{\mathtt{H}}_{n}}((-\infty,0]):=
\dfrac{8\pi}{n}\sum_{x\in \frac{1}{n}\mathbb{Z}_{-}\times \lbrace 0\rbrace}
\nu_{\rm exc}^{\widetilde{\mathtt{H}}_{n}}(x\rightarrow (-\infty,0]),
\end{equation}
\begin{equation}
\label{DefExc1q}
\nu_{\rm exc}^{\widetilde{\mathtt{H}}_{n}}([1,q]):=
\dfrac{8\pi}{n}\sum_{x\in ((\frac{1}{n}\mathbb{Z})\cap[1,q])\times \lbrace 0\rbrace}
\nu_{\rm exc}^{\widetilde{\mathtt{H}}_{n}}(x\rightarrow [1,q]).
\end{equation}
$\nu_{\rm exc}^{\widetilde{\mathtt{H}}_{n}}((-\infty,0])$ is a measure on excursions from and to
$\frac{1}{n}\mathbb{Z}_{-}\times \lbrace 0\rbrace$. $\nu_{\rm exc}^{\widetilde{\mathtt{H}}_{n}}([1,q])$ is a measure on excursions from and to $((\frac{1}{n}\mathbb{Z})\cap[1,q])\times \lbrace 0\rbrace$. 

The above measures can be disintegrated over the starting and the endpoint. The measure induced over the couple starting and endpoint is
\begin{displaymath}
8\pi
\sum_{\frac{i}{n}\in~\text{interval}}
\sum_{\frac{j}{n}\in~\text{interval}}
\mathbb{P}_{(\frac{i}{n},\frac{1}{n})}\bigg(B^{\widetilde{\mathtt{H}}_{n}}~\text{hits}
\left(\frac{1}{n}\mathbb{Z}\right)\times \lbrace 0\rbrace~\text{on}~
\left(\frac{j}{n},0\right)\bigg)
\delta_{((\frac{i}{n},\frac{0}{n}),(\frac{j}{n},\frac{0}{n}))},
\end{displaymath}
where "interval" stands for either $(-\infty, 0]$ or $[1,q]$, and 
$\delta_{\cdot}$ denotes the Dirac mass.
Let $G^{\mathtt{H}}(\cdot,\cdot)$ be the Green's function of the simple random walk $(x_{k})_{k\geq 0}$ on $\mathtt{H}=\mathbb{Z}\times\mathbb{N}$, killed at the first hitting time of $\mathbb{Z}\times\lbrace 0\rbrace$. 
Let $i,j\in\mathbb{Z}$. Then
\begin{equation}
\label{EqDL1}
\begin{split}
\mathbb{P}_{(\frac{i}{n},\frac{1}{n})}\bigg(B^{\widetilde{\mathtt{H}}_{n}}~\text{hits}&~
\left(\frac{1}{n}\mathbb{Z}\right)\times \lbrace 0\rbrace~\text{on}~\left(\frac{j}{n},0\bigg)
\right)\\&=
\sum_{k=0}^{+\infty}
\mathbb{P}_{(i,1)}\left(x_{1},\dots,x_{k-1}\not\in
\mathbb{Z}\times\lbrace 0\rbrace
,x_{k}=(j,1),x_{k+1}=(j,0)\right)
\\&=
\dfrac{1}{4}
\sum_{k=0}^{+\infty}
\mathbb{P}_{(i,1)}\left(x_{1},\dots,x_{k-1}\not\in
\mathbb{Z}\times\lbrace 0\rbrace
,x_{k}=(j,1)\right)
\\&=
\dfrac{1}{4}G^{\mathtt{H}}((i,1),(j,1))=
\dfrac{1}{4}G^{\mathtt{H}}((0,1),(j-i,1)).
\end{split}
\end{equation}
Indeed, to go from $(\frac{i}{n},\frac{1}{n})$ to $(\frac{j}{n},0)$ the moving particle needs to reach
$(\frac{j}{n},\frac{1}{n})$, possibly make excursions from and to this point without hitting
$\left(\frac{1}{n}\mathbb{Z}\right)\times \lbrace 0\rbrace$, and then with probability $\frac{1}{4}$ transition to
$(\frac{j}{n},0)$.
Thus, the measure over the starting and endpoint is
\begin{displaymath}
2\pi
\sum_{\frac{i}{n}\in~\text{interval}}
\sum_{\frac{j}{n}\in~\text{interval}}
G^{\mathtt{H}}((0,1),(j-i,1))
\delta_{((\frac{i}{n},\frac{0}{n}),(\frac{j}{n},\frac{0}{n}))}.
\end{displaymath}

Observe that the above measure is invariant by permuting the starting and the endpoint. Moreover, the conditional probability measures on excursions where the both ends are fixed are covariant with time reversal, that is to say the distribution on the unoriented excursion does not change. This means that the whole measures on excursions
$\nu_{\rm exc}^{\widetilde{\mathtt{H}}_{n}}((-\infty,0])$ and
$\nu_{\rm exc}^{\widetilde{\mathtt{H}}_{n}}([1,q])$ are invariant under time reversal. 

According to the asymptotic expansion given in \cite{LawlerLimic2010RW}, Section $8.1.1$,
\begin{equation}
\label{EqDL2}
G^{\mathtt{H}}((0,1),(j,1))=\dfrac{1}{\pi j^{2}}+O\left(\dfrac{1}{j^{3}}\right).
\end{equation}
So, as $n$ tends to infinity, the measure on the starting and endpoint converges to a measure with density with respect to Lebesgue:
\begin{displaymath}
2\dfrac{dx dy}{(y-x)^{2}}
\mathbbm{1}_{x,y\in~\text{interval}}
.
\end{displaymath}
The conditional probability measures on excursions of 
$B^{\widetilde{\mathtt{H}}_{n}}$ with fixed endpoints converge too. The limits are
the probability measures on two-dimensional Brownian excursions from $x$ to $y$ in $\mathbb{H}$, where $x,y\in\mathbb{R}$, and we will denote them
$\mathbb{P}^{\mathbb{H}}_{x,y}(\cdot)$. See \cite{Werner2005ConfRestr}, Section 1.2, for more on these normalised excursion probability measures.

Consequently, as $n$ tends to infinity, 
$\nu_{\rm exc}^{\widetilde{\mathtt{H}}_{n}}((-\infty,0])$ and
$\nu_{\rm exc}^{\widetilde{\mathtt{H}}_{n}}([1,q])$ have limits which are measures on Brownian excursions in $\mathbb{H}$, from and to 
$(-\infty,0]\times \lbrace 0\rbrace$ respectively 
$[1,q]\times \lbrace 0\rbrace$, and which disintegrate as follows:
\begin{displaymath}
\nu_{\rm exc}^{\mathbb{H}}((-\infty,0])=2\int_{-\infty}^{0}\int_{-\infty}^{0}
\mathbb{P}^{\mathbb{H}}_{x,y} \dfrac{dx dy}{(y-x)^{2}},
\qquad
\nu_{\rm exc}^{\mathbb{H}}([1,q])=
2\int_{1}^{q}\int_{1}^{q} \mathbb{P}^{\mathbb{H}}_{x,y} \dfrac{dx dy}{(y-x)^{2}}.
\end{displaymath}
In general, given $a<b\in \mathbb{R}$, we will use the notation
\begin{displaymath}
\nu_{\rm exc}^{\mathbb{H}}([a,b]):=
2\int_{a}^{b}\int_{a}^{b} \mathbb{P}^{\mathbb{H}}_{x,y} \dfrac{dx dy}{(y-x)^{2}}.
\end{displaymath}
See \cite{Werner2005ConfRestr}, Section 4.3, for more on these infinite mass excursion measures.

We will consider on $\widetilde{\mathtt{H}}_{n}$ three independent Poisson point processes:
\begin{itemize}
\item a loop-soup $\mathcal{L}^{\widetilde{\mathtt{H}}_{n}}_{1/2}$,
\item a Poisson point process of excursions of intensity $u\nu_{\rm exc}^{\widetilde{\mathtt{H}}_{n}}((-\infty,0])$, $u>0$,
denoted by $\mathcal{E}^{\widetilde{\mathtt{H}}_{n}}_{u}((-\infty,0])$,
\item a Poisson point process of excursions of intensity $v\nu_{\rm exc}^{\widetilde{\mathtt{H}}_{n}}([1,q])$, $v>0$, denoted by $\mathcal{E}^{\widetilde{\mathtt{H}}_{n}}_{v}([1,q])$.
\end{itemize}
We will consider the following event: either an excursion from $\mathcal{E}^{\widetilde{\mathtt{H}}_{n}}_{u}((-\infty,0])$
intersects an excursion from $\mathcal{E}^{\widetilde{\mathtt{H}}_{n}}_{v}([1,q])$ or an excursion from
$\mathcal{E}^{\widetilde{\mathtt{H}}_{n}}_{u}((-\infty,0])$ and one from
$\mathcal{E}^{\widetilde{\mathtt{H}}_{n}}_{v}([1,q])$ intersect a common cluster of $\mathcal{L}^{\widetilde{\mathtt{H}}_{n}}_{1/2}$. We will denote by
$p^{\widetilde{\mathtt{H}}_{n}}_{1/2,u,v}(q)$ the probability of this event. The second condition of intersecting a common cluster is equivalent to intersecting a common contour in 
$\mathcal{F}_{\rm ext}(\mathcal{L}^{{\widetilde{\mathtt{H}}_{n}}}_{1/2})$. 

Similarly we will consider on $\mathbb{H}$ three independent Poisson point processes:
\begin{itemize}
\item a loop-soup $\mathcal{L}^{\mathbb{H}}_{\alpha}$, $\alpha\in (0,1/2]$,
\item a Poisson point process of excursions of intensity $u\nu_{\rm exc}^{\mathbb{H}}((-\infty,0])$, $u>0$,
denoted by $\mathcal{E}^{\mathbb{H}}_{u}((-\infty,0])$,
\item a Poisson point process of excursions of intensity $v\nu_{\rm exc}^{\mathbb{H}}([1,q])$, $v>0$, denoted by $\mathcal{E}^{\mathbb{H}}_{v}([1,q])$.
\end{itemize}
Then we will consider the event when either an excursion from $\mathcal{E}^{\mathbb{H}}_{u}((-\infty,0])$ intersects an excursion from $\mathcal{E}^{\mathbb{H}}_{v}([1,q])$ or an excursion from $\mathcal{E}^{\mathbb{H}}_{u}((-\infty,0])$ and one from $\mathcal{E}^{\mathbb{H}}_{v}([1,q])$ intersect a common cluster of
$\mathcal{L}^{\mathbb{H}}_{\alpha}$. This event is schematically represented in Figure \ref{CEChain}. We denote
by $p^{\mathbb{H}}_{\alpha,u,v}(q)$ its probability.

\begin{figure}[ht]
\begin{center}
  \includegraphics[scale=0.5]{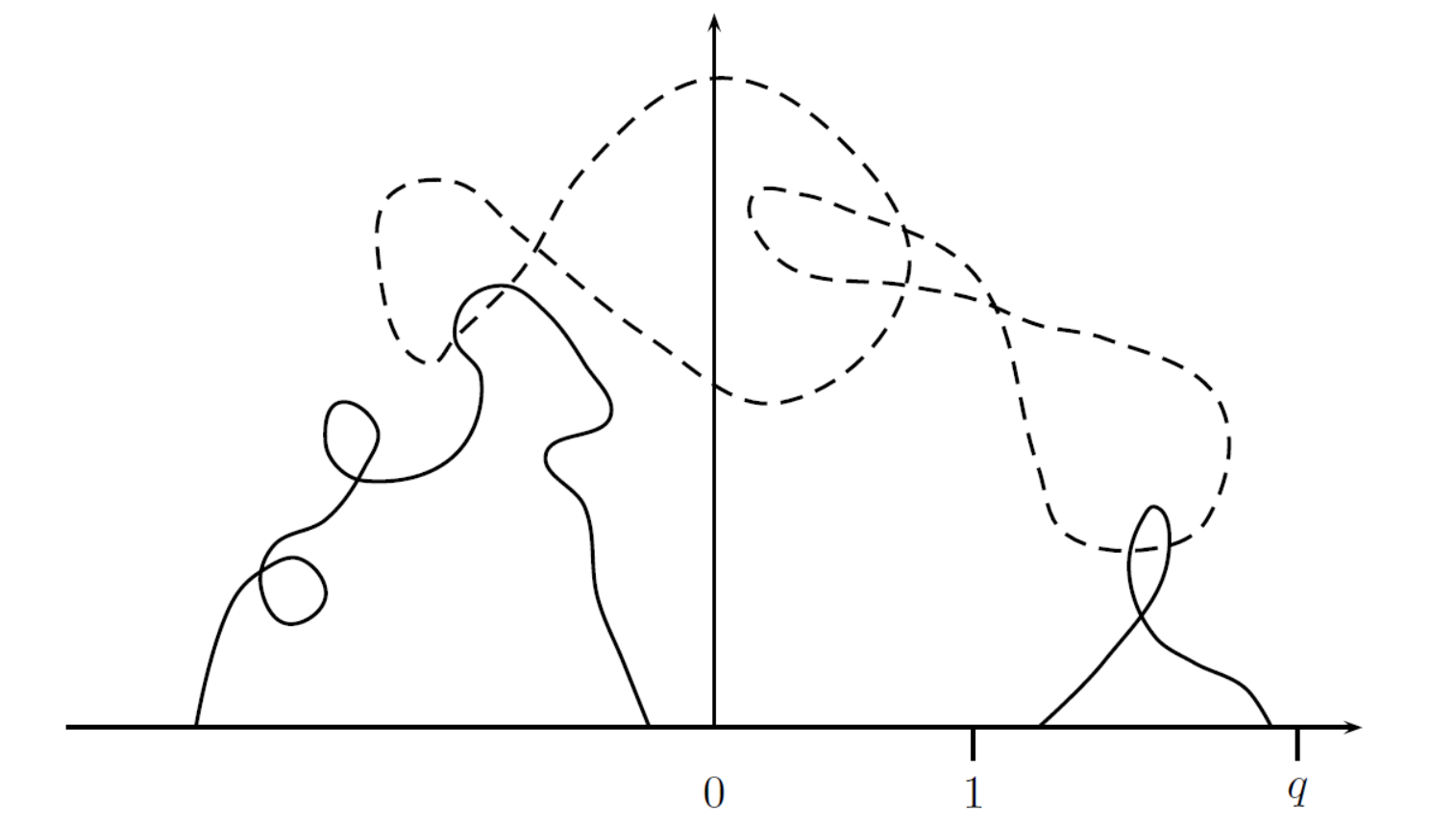}
 \end{center}
  \caption{Two excursions (full lines) connected by a chain of two loops (doted lines).}
  \label{CEChain}
\end{figure}

In Section \ref{secMetricGraph} we will compute $p^{\widetilde{\mathtt{H}}_{n}}_{1/2,u,v}(q)$
using the duality with the Gaussian free field, and compute its limit as $n$ tends to $+\infty$. In Section 
\ref{secContHP}, for an arbitrary value of $v$ and a particular value $u_{0}(\alpha)$ of $u$ (depending on $\alpha$) we will establish a differential equation in $q$ for $1-p^{\mathbb{H}}_{\alpha,u,v}(q)$. Using this we will show that
\begin{equation}
\label{EqConvEq}
\lim_{n\rightarrow +\infty}p^{\widetilde{\mathtt{H}}_{n}}_{1/2,u_{0}(1/2),v}(q)=
p^{\mathbb{H}}_{1/2,u_{0}(1/2),v}(q).
\end{equation}
This convergence will provide the "upper bound" we need. 
Indeed, if the scaling limit of
$\mathcal{F}_{\rm ext}(\mathcal{L}_{1/2}^{\widetilde{\mathtt{H}}_{n}})$ contains contours "strictly larger" than $\hbox{CLE}_{4}$, then
the limit contours would connect
$\mathcal{E}^{\mathbb{H}}_{u_{0}(1/2)}((-\infty,0])$ and
 $\mathcal{E}^{\mathbb{H}}_{v}([1,q])$ with a probability strictly larger than
\\$p^{\mathbb{H}}_{1/2,u_{0}(1/2),v}(q)$, which in
\eqref{EqConvEq} would give a strict inequality rather then an equality.
In Section \ref{secConv} we will prove the convergences to $\hbox{CLE}$
out of \eqref{EqConvEq} using the above argument.

\section{Computations on metric graph}
\label{secMetricGraph}

Let $\mathcal{G}=(V,E)$ be a connected undirected graph. $V$ is countable and each vertex is of finite degree. Each
edge $\lbrace x,y\rbrace$ is endowed with a positive conductance $C(x,y)>0$. We also consider a metric graph 
$\widetilde{\mathcal{G}}$ associated to $\mathcal{G}$ where each edge $\lbrace x,y\rbrace$ is replaced by a continuous line of length
\begin{equation}
\label{LengthConductance}
r(x,y)=\dfrac{1}{2}C(x,y)^{-1}.
\end{equation}

Let $B^{\widetilde{\mathcal{G}}}$ be the Brownian motion on the metric graph $\widetilde{\mathcal{G}}$. Let $F$ be a subset of $V$. Let $\zeta_{F}$ be the first time $B^{\widetilde{\mathcal{G}}}$ hits $F$. Let 
$\mu^{\widetilde{\mathcal{G}},F}$ be the measure on loops associated to 
$(B^{\widetilde{\mathcal{G}}}_{t})_{0\leq t<\zeta_{F}}$, the Brownian motion killed at reaching $F$. It is defined according to \eqref{DefLoopMeasure}. See \cite{Lupu2014LoopsGFF} for details. Let 
$\mathcal{L}^{\widetilde{\mathcal{G}},F}_{\alpha}$ be the Poisson point process of intensity $\alpha\mu^{\widetilde{\mathcal{G}},F}$.

$B^{\widetilde{\mathcal{G}}}$ has a time-space continuous family of local times $L_{t}^{z}(B^{\widetilde{\mathcal{G}}})$.
The Green's function of the killed Brownian motion 
$(B^{\widetilde{\mathcal{G}}}_{t})_{0\leq t<\zeta_{F}}$ is defined to be
\begin{displaymath}
G^{\widetilde{\mathcal{G}},F}(z,z')=
\mathbb{E}_{z}
\left[L_{\zeta_{F}}^{z'}(B^{\widetilde{\mathcal{G}}})\right]
\end{displaymath}
and is symmetric.
Just as $B^{\widetilde{\mathcal{G}}}$, a loop $\gamma\in\mathcal{L}^{\widetilde{\mathcal{G}},F}_{\alpha}$ has a
family of continuous local times $L_{t}^{z}(\gamma)$. We will denote by $t_{\gamma}$ the total life-time of the loop $\gamma$. The occupation field 
\\$(\widehat{\mathcal{L}}^{z}_{\alpha})_{z\in\widetilde{\mathcal{G}}\setminus F}$ is defined as
\begin{displaymath}
\widehat{\mathcal{L}}^{z}_{\alpha}=
\sum_{\gamma\in\mathcal{L}^{\widetilde{\mathcal{G}},F}_{\alpha}}
L_{t_{\gamma}}^{z}(\gamma).
\end{displaymath}
It is a continuous field. The clusters of $\mathcal{L}^{\widetilde{\mathcal{G}},F}_{\alpha}$ are delimited by the zero set of the occupation field.

At intensity parameter $\alpha=1/2$, the occupation field $(\widehat{\mathcal{L}}^{z}_{\alpha})_{z\in\widetilde{\mathcal{G}}\setminus F}$ is related to the Gaussian free field 
$(\phi_{z})_{z\in\widetilde{\mathcal{G}}\setminus F}$ with zero mean and covariance function 
$G^{\widetilde{\mathcal{G}},F}$.  Given $z\in\widetilde{\mathcal{G}}\setminus F$ such that
$\widehat{\mathcal{L}}^{z}_{1/2}>0$, we denote by $\mathcal{C}_{1/2}(z)$ the cluster of
$\mathcal{L}^{\widetilde{\mathcal{G}},F}_{1/2}$ that contains $z$. We introduce a countable family 
$(\sigma(\mathcal{C}_{1/2}(z)))_{z\in\widetilde{\mathcal{G}}\setminus F}$ of
i.i.d. random variables, independent of $\mathcal{L}^{\widetilde{\mathcal{G}},F}_{1/2}$ conditional on the clusters, which equal $-1$ or $1$ with equal probability. There is an equality in law (see \cite{Lupu2014LoopsGFF}):
\begin{equation}
\label{DualityLoopsGFF}
(\phi_{z})_{z\in\widetilde{\mathcal{G}}\setminus F}\stackrel{(d)}{=}
\left(\sigma(\mathcal{C}_{1/2}(z))\sqrt{2\widehat{\mathcal{L}}^{z}_{1/2}}\right)
_{z\in\widetilde{\mathcal{G}}\setminus F}.
\end{equation}

Let $x,y\in V\setminus F$. Let $C^{\rm eq}(x,y),\chi^{\rm eq}_{(x,y)}(x),\chi^{\rm eq}_{(x,y)}(y)$ be the quantities defined by
\begin{displaymath}
\left(
\begin{array}{cc}
G^{\widetilde{\mathcal{G}},F}(x,x) & 
G^{\widetilde{\mathcal{G}},F}(x,y) \\ 
G^{\widetilde{\mathcal{G}},F}(x,y) & 
G^{\widetilde{\mathcal{G}},F}(y,y)
\end{array}
\right)^{-1}
=\left(
\begin{array}{cc}
\chi^{\rm eq}_{(x,y)}(x)+C^{\rm eq}(x,y) & -C^{\rm eq}(x,y) \\ 
-C^{\rm eq}(x,y) & \chi^{\rm eq}_{(x,y)}(y)+C^{\rm eq}(x,y)
\end{array}
\right).
\end{displaymath}
Then $C^{\rm eq}(x,y)>0$, $\chi^{\rm eq}_{(x,y)}(x),
\chi^{\rm eq}_{(x,y)}(y)\geq 0$, 
$(\chi^{\rm eq}_{(x,y)}(x)$ and
$\chi^{\rm eq}_{(x,y)}(y))\neq (0,0)$. 
$C^{\rm eq}(x,y)$, $\chi^{\rm eq}_{(x,y)}(x)$ and
$\chi^{\rm eq}_{(x,y)}(y)$ are the conductances of a network electrically equivalent to $\mathcal{G}$, where all vertices in $F$ are at the same electrical potential. This equivalent network has three vertices, $x$, $y$ and a vertex corresponding to the set $F$. $C^{\rm eq}(x,y)$ is the conductance between $x$ and $y$, $\chi^{\rm eq}_{(x,y)}(x)$ respectively 
$\chi^{\rm eq}_{(x,y)}(y)$ is the conductance
between $x$ and $F$ respectively $y$ and $F$.

Let $\mathcal{N}_{1/2}(x,y)$ the number of loops in $\mathcal{L}^{\widetilde{\mathcal{G}},F}_{1/2}$
that visit both $x$ and $y$.

\begin{lemma}
\label{LemProbNoConx}
Let $u,v>0$ and $x,y\in V\setminus F$.
\begin{equation}
\label{ProbAbsConx}
\mathbb{P}\left(\mathcal{C}_{1/2}(x)\neq\mathcal{C}_{1/2}(y)\Big\vert
\widehat{\mathcal{L}}^{x}_{1/2}=u,
\widehat{\mathcal{L}}^{y}_{1/2}=v,
\mathcal{N}_{1/2}(x,y)=0\right)=e^{-2C^{\rm eq}(x,y)\sqrt{uv}}.
\end{equation}
\end{lemma}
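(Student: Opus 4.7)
My strategy combines the isomorphism \eqref{DualityLoopsGFF} with a Poissonian decomposition of the loop-soup. The key observation is the inclusion $\{\mathcal{C}_{1/2}(x)\neq\mathcal{C}_{1/2}(y)\}\subseteq\{\mathcal{N}_{1/2}(x,y)=0\}$: if a single loop visits both $x$ and $y$ it forces them into the same cluster. Hence the left-hand side of \eqref{ProbAbsConx} equals
\begin{displaymath}
\dfrac{\mathbb{P}\bigl(\mathcal{C}_{1/2}(x)\neq\mathcal{C}_{1/2}(y)\bigm|\widehat{\mathcal{L}}^{x}_{1/2}=u,\widehat{\mathcal{L}}^{y}_{1/2}=v\bigr)}{\mathbb{P}\bigl(\mathcal{N}_{1/2}(x,y)=0\bigm|\widehat{\mathcal{L}}^{x}_{1/2}=u,\widehat{\mathcal{L}}^{y}_{1/2}=v\bigr)},
\end{displaymath}
and I would compute the numerator and the denominator separately, aiming for $e^{-2C^{\rm eq}\sqrt{uv}}/\cosh(2C^{\rm eq}\sqrt{uv})$ and $1/\cosh(2C^{\rm eq}\sqrt{uv})$ respectively.

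For the numerator, \eqref{DualityLoopsGFF} makes conditioning on the occupation fields equivalent to conditioning on $(|\phi_{x}|,|\phi_{y}|)=(\sqrt{2u},\sqrt{2v})$; the cluster signs necessarily agree on $\{\mathcal{C}_{1/2}(x)=\mathcal{C}_{1/2}(y)\}$ while they are independent Rademacher on the complement. Writing $\alpha_{x}:=\chi^{\rm eq}_{(x,y)}(x)+C^{\rm eq}(x,y)$ and similarly $\alpha_{y}$, the bivariate centred Gaussian density of $(\phi_{x},\phi_{y})$ evaluated at $(\epsilon_{1}\sqrt{2u},\epsilon_{2}\sqrt{2v})$ is proportional to $\exp(-\alpha_{x}u-\alpha_{y}v+2\epsilon_{1}\epsilon_{2}C^{\rm eq}\sqrt{uv})$, so
\begin{displaymath}
\mathbb{P}\bigl(\mathrm{sign}(\phi_{x}\phi_{y})=\epsilon\bigm| |\phi_{x}|=\sqrt{2u},|\phi_{y}|=\sqrt{2v}\bigr)=\dfrac{e^{2\epsilon C^{\rm eq}\sqrt{uv}}}{2\cosh(2C^{\rm eq}\sqrt{uv})},\qquad\epsilon=\pm 1.
\end{displaymath}
Opposite signs ($\epsilon=-1$) can only occur when the clusters differ and then with the probability $1/2$ from independent Rademachers, so doubling the $\epsilon=-1$ case gives the numerator $e^{-2C^{\rm eq}\sqrt{uv}}/\cosh(2C^{\rm eq}\sqrt{uv})$.

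For the denominator, I would decompose $\mathcal{L}^{\widetilde{\mathcal{G}},F}_{1/2}$ into four independent Poisson point processes indexed by which of $x,y$ a loop visits. On $\{\mathcal{N}_{1/2}(x,y)=0\}$ the variable $\widehat{\mathcal{L}}^{x}_{1/2}$ is entirely produced by the only-$x$ sub-soup, which is the intensity-$1/2$ loop-soup on $\widetilde{\mathcal{G}}$ with $\{y\}$ adjoined to the killing set; \eqref{DualityLoopsGFF} applied there turns it into a rescaled $\chi^{2}_{1}$ of scale $G^{\widetilde{\mathcal{G}},F\cup\{y\}}(x,x)=1/\alpha_{x}$, and symmetrically for $\widehat{\mathcal{L}}^{y}_{1/2}$ with scale $1/\alpha_{y}$. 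By independence, the joint density on $\{\mathcal{N}=0\}$ is $\sqrt{\alpha_{x}\alpha_{y}/(\pi^{2}uv)}\,e^{-\alpha_{x}u-\alpha_{y}v}$. The Poissonian factor $\mathbb{P}(\mathcal{N}_{1/2}(x,y)=0)$ I would identify by sending $\lambda,\mu\to\infty$ in the Le Jan identity $\mathbb{E}[e^{-\lambda\widehat{\mathcal{L}}^{x}-\mu\widehat{\mathcal{L}}^{y}}]=\det(I+G_{\{x,y\}}\mathrm{diag}(\lambda,\mu))^{-1/2}$, dividing by the analogous Laplace transforms of the two only-one-vertex sub-soups and using $\det G_{\{x,y\}}=1/(\alpha_{x}\alpha_{y}-(C^{\rm eq})^{2})$; this yields $\mathbb{P}(\mathcal{N}=0)=\sqrt{1-(C^{\rm eq})^{2}/(\alpha_{x}\alpha_{y})}$. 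Finally the full density of $(\widehat{\mathcal{L}}^{x}_{1/2},\widehat{\mathcal{L}}^{y}_{1/2})$, obtained by summing the bivariate Gaussian density over the four sign patterns, equals $\sqrt{\alpha_{x}\alpha_{y}-(C^{\rm eq})^{2}}/(\pi\sqrt{uv})\cdot e^{-\alpha_{x}u-\alpha_{y}v}\cosh(2C^{\rm eq}\sqrt{uv})$; dividing the $\{\mathcal{N}=0\}$-joint density by this gives $1/\cosh(2C^{\rm eq}\sqrt{uv})$ for the denominator.

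The main technical step will be the Laplace-transform-limit identification of $\mathbb{P}(\mathcal{N}_{1/2}(x,y)=0)$, which requires juggling the Green's functions on $\widetilde{\mathcal{G}}$ with killing sets $F$, $F\cup\{x\}$, and $F\cup\{y\}$; the electrical-network formula $G^{\widetilde{\mathcal{G}},F\cup\{y\}}(x,x)=1/(\chi^{\rm eq}(x)+C^{\rm eq})$ is precisely what makes the sub-soup densities on $\{\mathcal{N}=0\}$ align with those on the full probability space. Everything else is routine bivariate-Gaussian bookkeeping.
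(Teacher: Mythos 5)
Your proposal is correct, and it reproduces the paper's treatment of the numerator exactly: the same reduction of \eqref{ProbAbsConx} to a ratio via the inclusion $\{\mathcal{N}_{1/2}(x,y)>0\}\subseteq\{\mathcal{C}_{1/2}(x)=\mathcal{C}_{1/2}(y)\}$, and the same sign computation from \eqref{DualityLoopsGFF}, yielding $e^{-2C^{\rm eq}\sqrt{uv}}/\cosh(2C^{\rm eq}\sqrt{uv})$. Where you genuinely diverge is the denominator and the final assembly. The paper does not compute $\mathbb{P}(\mathcal{N}_{1/2}(x,y)=0\mid\widehat{\mathcal{L}}^{x}_{1/2}=u,\widehat{\mathcal{L}}^{y}_{1/2}=v)$ directly: it observes that this quantity (hence the whole ratio) depends only on $u,v$ and the three equivalent conductances, replaces $\widetilde{\mathcal{G}}$ by a one-dimensional interval with the same equivalent network, and imports the answer from Lemmas~3.4 and 3.5 of \cite{Lupu2014LoopsGFF}; the value $\cosh(2C^{\rm eq}\sqrt{uv})^{-1}$ of the denominator then falls out as a byproduct. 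You instead compute the denominator in situ on the general metric graph, via the four-fold Poisson decomposition by which of $x,y$ a loop visits, the restriction property identifying the only-$x$ sub-soup with the $F\cup\{y\}$-killed soup (so that $G^{\widetilde{\mathcal{G}},F\cup\{y\}}(x,x)=1/\alpha_{x}$ gives the scale of the $\chi^{2}_{1}$ marginal), and Le Jan's determinantal Laplace transform to extract $\mathbb{P}(\mathcal{N}_{1/2}(x,y)=0)=\sqrt{1-(C^{\rm eq})^{2}/(\alpha_{x}\alpha_{y})}$; I checked the Gaussian bookkeeping and the limit $\lambda,\mu\to+\infty$, and everything is consistent (the limit does require noting that each loop through both $x$ and $y$ carries strictly positive local time at both points, which holds here). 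Your route is self-contained and avoids the external one-dimensional lemmas, at the cost of more computation; the paper's route is shorter but leans on the invariance of the answer under replacement by an electrically equivalent network and on results proved elsewhere. Both arrive at the same two ingredients and the same quotient $e^{-2C^{\rm eq}(x,y)\sqrt{uv}}$.
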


\begin{proof}
If $\mathcal{N}_{1/2}(x,y)>0$ then $\mathcal{C}_{1/2}(x)=\mathcal{C}_{1/2}(y)$. Thus
\begin{multline}
\label{ProbAbsConx2}
\mathbb{P}\left(\mathcal{C}_{1/2}(x)\neq\mathcal{C}_{1/2}(y)\Big\vert
\widehat{\mathcal{L}}^{x}_{1/2}=u,
\widehat{\mathcal{L}}^{y}_{1/2}=v,
\mathcal{N}_{1/2}(x,y)=0\right)\\=
\dfrac{\mathbb{P}\left(\mathcal{C}_{1/2}(x)\neq\mathcal{C}_{1/2}(y)\Big\vert
\widehat{\mathcal{L}}^{x}_{1/2}=u,
\widehat{\mathcal{L}}^{y}_{1/2}=v\right)}
{\mathbb{P}\left(\mathcal{N}_{1/2}(x,y)=0\Big\vert
\widehat{\mathcal{L}}^{x}_{1/2}=u,
\widehat{\mathcal{L}}^{y}_{1/2}=v\right)}.
\end{multline}

The value of the denominator
\begin{displaymath}
\mathbb{P}\left(\mathcal{N}_{1/2}(x,y)=0\Big\vert
\widehat{\mathcal{L}}^{x}_{1/2}=u,
\widehat{\mathcal{L}}^{y}_{1/2}=v\right)
\end{displaymath}
depends only on $u,v$ and on $G^{\widetilde{\mathcal{G}},F}(x,x),G^{\widetilde{\mathcal{G}},F}(y,y),
G^{\widetilde{\mathcal{G}},F}(x,y)$ (or equivalently on 
\\$C^{\rm eq}(x,y)$, $\chi^{\rm eq}_{(x,y)}(x)$, 
$\chi^{\rm eq}_{(x,y)}(y)$).
This a general property of the loop-soups (see \cite{LeJan2011Loops}, especially chapter $7$).

As for the numerator, it can be computed using the duality with the Gaussian free field \eqref{DualityLoopsGFF}. 
If $\mathcal{C}_{1/2}(x)=\mathcal{C}_{1/2}(y)$,
then $\phi_{x}$ and $\phi_{y}$ have same sign. Otherwise, $\phi_{x}$ and $\phi_{y}$ have same sign with conditional probability $1/2$.
Thus
\begin{equation*}
\begin{split}
\mathbb{P}\Big(\mathcal{C}_{1/2}(x)\neq\mathcal{C}_{1/2}(y)\Big\vert
\widehat{\mathcal{L}}^{x}_{1/2}=u, 
\widehat{\mathcal{L}}^{y}_{1/2}=v\Big)
&=
1-\mathbb{E}\left[\operatorname{sgn}(\phi_{x})\operatorname{sgn}(\phi_{y})\big\vert\vert\phi_{x}\vert
=\sqrt{2u},\vert\phi_{y}\vert=\sqrt{2v}\right]
\\&=
1-
\dfrac{e^{2C^{\rm eq}(x,y)\sqrt{uv}}-e^{-2C^{\rm eq}(x,y)\sqrt{uv}}}
{e^{2C^{\rm eq}(x,y)\sqrt{uv}}+e^{-2C^{\rm eq}(x,y)\sqrt{uv}}}
\\&=\dfrac{e^{-2C^{\rm eq}(x,y)\sqrt{uv}}}{\cosh(2C^{\rm eq}(x,y)\sqrt{uv})}.
\end{split}
\end{equation*}

It follows that the probability \eqref{ProbAbsConx} that we want to compute only depends on $u,v$ and on
\\$C^{\rm eq}(x,y),\chi^{\rm eq}_{(x,y)}(x),\chi^{\rm eq}_{(x,y)}(y)$. Thus it is the same if we replace $\widetilde{\mathcal{G}}$
by the interval
\begin{displaymath}
I=\left(-\frac{1}{2}\chi^{\rm eq}_{(x,y)}(x)^{-1},
\frac{1}{2}C^{\rm eq}(x,y)^{-1}
+\dfrac{1}{2}\chi^{\rm eq}_{(x,y)}(y)^{-1}\right),
\end{displaymath}
the Brownian motion on $\widetilde{\mathcal{G}}$ by the Brownian motion on $I$ killed at endpoints, and the points $x$ and $y$ by $0$ and $\tfrac{1}{2}C^{\rm eq}(x,y)^{-1}$ respectively. 
According to Lemma $3.4$ and $3.5$ in
\cite{Lupu2014LoopsGFF}, we get \eqref{ProbAbsConx}.

By the way we also get that
\begin{displaymath}
\mathbb{P}\left(\mathcal{N}_{1/2}(x,y)=0\Big\vert
\widehat{\mathcal{L}}^{x}_{1/2}=u,
\widehat{\mathcal{L}}^{y}_{1/2}=v\right)=
\cosh(2C^{\rm eq}(x,y)\sqrt{uv})^{-1}.
\qedhere
\end{displaymath}
\end{proof}

In \cite{LeJan2011Loops}, chapter $7$, there is a combinatorial representation of $C^{\rm eq}(x,y)$. Given $z\in V$, we will denote
\begin{displaymath}
\lambda(z):=\sum_{\substack{z'\in V\\ z'\sim z}}C(z,z'),
\end{displaymath}
where the sum is over the neighbours of $z$ in the (discrete) graph $\mathcal{G}$. Then
\begin{displaymath}
C^{\rm eq}(x,y)=\lambda(x)\sum_{j\geq 1}\sum_{
\substack{
(z_{0},\dots,z_{j})\in (V\setminus F)^{j+1} \\ 
z_{0}=x,z_{j}=y,z_{i}\sim z_{i-1}\\ 
z_{i}\neq x,y~\text{for}~1\leq i\leq j-1
}
}\prod_{i=1}^{j}\dfrac{C(z_{i-1},z_{i})}{\lambda(z_{i-1})}.
\end{displaymath}
The sum is over all the discrete nearest neighbour paths joining $x$ to $y$, that avoid $F$ and only visit $x$ and $y$ at endpoints. The above equality can be rewritten as
\begin{equation}
\label{RepCeq}
C^{\rm eq}(x,y)=\sum_{\substack{z\in V\\z\sim x}}C(x,z)\mathbb{P}_{z}(B^{\widetilde{\mathcal{G}}}~\text{hits}~
y~\text{before}~F~\text{or}~x).
\end{equation}

Next we return to the metric graph half-plane 
$\widetilde{\mathtt{H}}_{n}$. Let $a>0$. Let 
$\widetilde{\mathcal{G}}_{n,a}(q)$ be the metric graph obtained from $\widetilde{\mathtt{H}}_{n}$ by identifying the following vertices:
\begin{itemize}
\item All the vertices in 
$((\frac{1}{n}\mathbb{Z})\cap[-a,0])\times \lbrace 0\rbrace$ 
are identified into a single vertex
$\lhd_{n}(a)$.
\item All the vertices in 
$((\frac{1}{n}\mathbb{Z})\cap[1,q])\times \lbrace 0\rbrace$ 
are identified into a single vertex
$\rhd_{n}(q)$.
\end{itemize}
See Figure \ref{CEGraph}. We consider a finite value of $a$ just to have a finite degree for the quotient vertex $\lhd_{n}(a)$, but eventually we will consider $a\to +\infty$.

\begin{figure}[ht]
\begin{center}
  \includegraphics[scale=0.5]{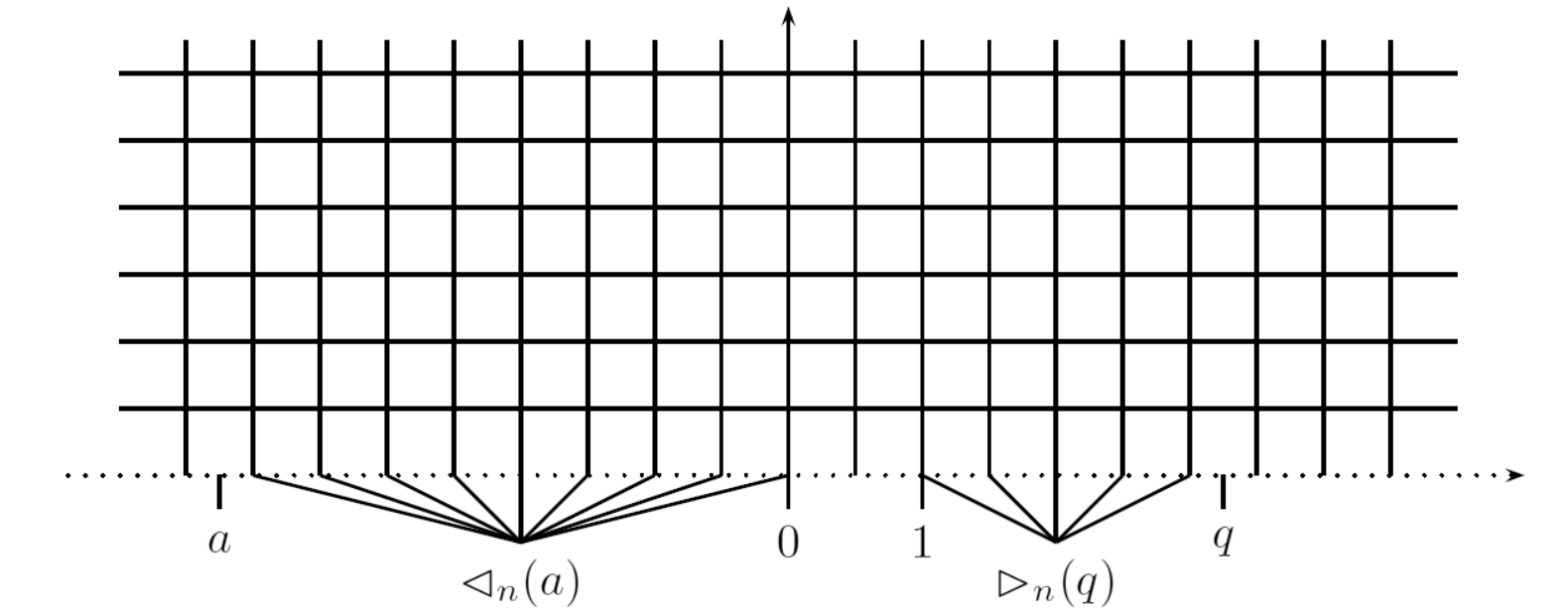}
\end{center}
  \caption{Illustration of points identified into $\lhd_{n}(a)$ and $\rhd_{n}(q)$.}
  \label{CEGraph}
\end{figure}

As the length of the line joining $(\frac{i}{n},\frac{j}{n})$ to $(\frac{i+1}{n},\frac{j}{n})$ or
$(\frac{i}{n},\frac{j}{n})$ to $(\frac{i}{n},\frac{j+1}{n})$ is $\frac{1}{n}$, the corresponding conductance is according
to \eqref{LengthConductance} equal to $\frac{n}{2}$. Let 
$C^{\rm eq}_{n,a}(q)$ be the equivalent (or effective) conductance between
$\lhd_{n}(a)$ and $\rhd_{n}(q)$ when all the points in 
$(\frac{1}{n})\mathbb{Z}\times\lbrace 0\rbrace$ 
other than those identified to $\lhd_{n}(a)$ or $\rhd_{n}(q)$ have the same electrical potential. According
to \eqref{RepCeq},
\begin{displaymath}
C^{\rm eq}_{n,a}(q)=\dfrac{n}{2}\sum_{i=n}^{\lfloor nq\rfloor}
\mathbb{P}_{(\frac{i}{n},\frac{1}{n})}\left(B^{\widetilde{\mathtt{H}}_{n}}~\text{hits}~
\left(\frac{1}{n}\mathbb{Z}\right)\times \lbrace 0\rbrace~\text{on}~[-a,0]
\times\lbrace 0\rbrace\right).
\end{displaymath}
As $a$ tends to infinity, $C^{\rm eq}_{n,a}(q)$ increases and converges to
\begin{equation}
\label{Ceqn}
C^{\rm eq}_{n}(q)=\dfrac{n}{2}\sum_{i=n}^{\lfloor nq\rfloor}
\mathbb{P}_{(\frac{i}{n},\frac{1}{n})}\left(B^{\widetilde{\mathtt{H}}_{n}}~\text{hits}~
\left(\frac{1}{n}\mathbb{Z}\right)\times \lbrace 0\rbrace~\text{on}~(-\infty,0]
\times\lbrace 0\rbrace\right).
\end{equation}

\begin{lemma}
\label{LemCeqHP}
For all $n\in\mathbb{N}^{\ast}$ and $q>1$, $C^{\rm eq}_{n}(q)<+\infty$. Moreover,
\begin{displaymath}
\lim_{n\rightarrow +\infty}\dfrac{1}{n}C^{\rm eq}_{n}(q)
=\dfrac{1}{8\pi}\log(q).
\end{displaymath}
\end{lemma}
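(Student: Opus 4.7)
The strategy is to combine the exact identity \eqref{EqDL1} with the asymptotic \eqref{EqDL2} to turn $C^{\rm eq}_{n}(q)$ into a Riemann–type sum of the form $\sum 1/i$, whose integral limit produces the logarithm.

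More concretely, my first step is to use \eqref{EqDL1} to compute the hitting probability in \eqref{Ceqn}. Summing the formula there over $j\leq 0$ gives, for each $i\geq n$,
\begin{equation*}
\mathbb{P}_{(i/n,1/n)}\!\left(B^{\widetilde{\mathtt{H}}_{n}}\text{ hits } \tfrac{1}{n}\mathbb{Z}\times\{0\}\text{ on }(-\infty,0]\times\{0\}\right)
= \tfrac{1}{4}\sum_{j\leq 0} G^{\mathtt{H}}((0,1),(j-i,1))
= \tfrac{1}{4}\sum_{m\geq i} G^{\mathtt{H}}((0,1),(m,1)),
\end{equation*}
where in the last equality I use the horizontal reflection symmetry $G^{\mathtt{H}}((0,1),(m,1))=G^{\mathtt{H}}((0,1),(-m,1))$ of the simple random walk killed at $\mathbb{Z}\times\{0\}$. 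Finiteness of $C^{\rm eq}_{n}(q)$ is already clear at this point: the asymptotic \eqref{EqDL2} implies $G^{\mathtt{H}}((0,1),(m,1))=O(m^{-2})$, so the tail sum $\sum_{m\geq i}G^{\mathtt{H}}((0,1),(m,1))$ is finite for every $i\geq n\geq 1$, and the outer sum in \eqref{Ceqn} has only finitely many terms.

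For the asymptotics, I plug \eqref{EqDL2} into the tail sum to get
\begin{equation*}
\sum_{m\geq i} G^{\mathtt{H}}((0,1),(m,1)) = \sum_{m\geq i}\frac{1}{\pi m^{2}} + O\!\left(\frac{1}{i^{2}}\right) = \frac{1}{\pi i} + O\!\left(\frac{1}{i^{2}}\right),
\end{equation*}
by the standard comparison $\sum_{m\geq i}m^{-2} = i^{-1}+O(i^{-2})$. Substituting back into \eqref{Ceqn},
\begin{equation*}
C^{\rm eq}_{n}(q) = \frac{n}{8}\sum_{i=n}^{\lfloor nq\rfloor}\left[\frac{1}{\pi i}+O\!\left(\frac{1}{i^{2}}\right)\right] = \frac{n}{8\pi}\sum_{i=n}^{\lfloor nq\rfloor}\frac{1}{i} + O(1).
\end{equation*}
Since $\sum_{i=n}^{\lfloor nq\rfloor} i^{-1} = \log(\lfloor nq\rfloor/n)+O(1/n) = \log q + O(1/n)$, I conclude $C^{\rm eq}_{n}(q)=\tfrac{n}{8\pi}\log q + O(1)$, and dividing by $n$ yields the claimed limit.

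The only mildly delicate point is controlling the error term uniformly: I need the $O(m^{-3})$ error in \eqref{EqDL2} to hold uniformly in $m$ (so that its tail sum really is $O(i^{-2})$, hence contributes $O(1)$ after multiplying by $n$ and summing $O(n)$ terms with $i\geq n$), and I need the reflection symmetry of $G^{\mathtt{H}}$, which follows directly from the invariance of the simple random walk on $\mathtt{H}$ under $(i,j)\mapsto(-i,j)$ together with the invariance of the killing set $\mathbb{Z}\times\{0\}$. Neither of these is a real obstacle, so the proof reduces to the calculation above.
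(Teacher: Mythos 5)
Your proof is correct and follows essentially the same route as the paper: both reduce $C^{\rm eq}_{n}(q)$ via \eqref{EqDL1} and \eqref{EqDL2} to the double sum $\frac{n}{8\pi}\sum_{i=n}^{\lfloor nq\rfloor}\sum_{m\geq i}m^{-2}+O(1)$ and then evaluate the harmonic sum to get $\log q$. Your explicit invocation of the reflection symmetry of $G^{\mathtt{H}}$ and of the uniformity of the error in \eqref{EqDL2} only makes implicit steps of the paper's computation visible; there is no substantive difference.
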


\begin{proof}
Using the computation \eqref{EqDL1} and the asymptotic expansion
\eqref{EqDL2}, we get that 
\\$C^{\rm eq}_{n}(q)<+\infty$ and that
\begin{equation*}
\begin{split}
\dfrac{1}{n}C^{\rm eq}_{n}(q)=&\dfrac{1}{8\pi}\sum_{i=n}^{\lfloor nq\rfloor}
\sum_{j=0}^{+\infty}\dfrac{1}{(i+j)^{2}}+
O\left(\sum_{i=n}^{\lfloor nq\rfloor}
\sum_{j=0}^{+\infty}\dfrac{1}{(i+j)^{3}}\right)\\
=&\dfrac{1}{8\pi}\sum_{i=n}^{\lfloor nq\rfloor}
\dfrac{1}{i}+O\left(\sum_{i=n}^{\lfloor nq\rfloor}
\dfrac{1}{i^{2}}\right)\\
=&\dfrac{1}{8\pi}\log(q)+O\left(\dfrac{1}{n}\right).
\qedhere
\end{split}
\end{equation*}
\end{proof}

Let $\nu_{\rm exc}^{\widetilde{\mathtt{H}}_{n}}([-a,0])$ be the measure on excursions $\nu_{\rm exc}^{\widetilde{\mathtt{H}}_{n}}((-\infty,0])$ restricted to the excursions from and to
$[-a,0]\times\lbrace 0\rbrace$. Let $\mathcal{L}_{\alpha}^{\widetilde{\mathcal{G}}_{n,a}(q)}$ be the loop-soup associated to the Brownian motion on the metric graph $\widetilde{\mathcal{G}}_{n,a}(q)$, killed at the first hitting time of 
$(\frac{1}{n})\mathbb{Z}\times\lbrace 0\rbrace$ outside the points identified to $\lhd_{n}(a)$ or $\rhd_{n}(q)$. Let $(\widehat{\mathcal{L}}^{z}_{n,a,q,\alpha})_{z\in\widetilde{\mathcal{G}}_{n,a}(q)}$  be the occupation field of $\mathcal{L}_{\alpha}^{\widetilde{\mathcal{G}}_{n,a}(q)}$. Let $\mathcal{N}_{\alpha}(\lhd_{n}(a),\rhd_{n}(q))$ be the number of loops in $\mathcal{L}_{\alpha}^{\widetilde{\mathcal{G}}_{n,a}(q)}$ joining $\lhd_{n}(a)$ to $\rhd_{n}(q)$.

\begin{lemma}
\label{LemRepLoopsExc}
Let $a,\alpha,u,v>0$. We consider $\mathcal{L}_{\alpha}^{\widetilde{\mathcal{G}}_{n,a}(q)}$ conditioned on
\begin{displaymath}
\widehat{\mathcal{L}}^{\lhd_{n}(a)}_{n,a,q,\alpha}=u,
\widehat{\mathcal{L}}^{\rhd_{n}(q)}_{n,a,q,\alpha}=v~\text{and}~
\mathcal{N}_{\alpha}(\lhd_{n}(a),\rhd_{n}(q))=0.
\end{displaymath}
Then $\mathcal{L}_{\alpha}^{\widetilde{\mathcal{G}}_{n,a}(q)}$ consists of three independent families of loops:
\begin{itemize}
\item The loops that visit neither $\lhd_{n}(a)$ nor $\rhd_{n}(q)$. These are the same as the loops in
$\mathcal{L}^{\widetilde{\mathtt{H}}_{n}}_{\alpha}$.
\item The loops that visit $\lhd_{n}(a)$. The excursions these loops make outside $\lhd_{n}(a)$ form a Poisson point process of intensity 
$\frac{n}{8\pi}u\nu_{\rm exc}^{\widetilde{\mathtt{H}}_{n}}([-a,0])$.
\item The loops that visit $\rhd_{n}(q)$. The excursions these loops make outside $\rhd_{n}(q)$ form a Poisson point process of intensity $\frac{n}{8\pi}v\nu_{\rm exc}^{\widetilde{\mathtt{H}}_{n}}([1,q])$.
\end{itemize}
\end{lemma}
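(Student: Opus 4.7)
The plan is to combine the standard Poissonian splitting of loop-soups with Itô excursion theory at the two identified vertices $\lhd_{n}(a)$ and $\rhd_{n}(q)$.

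First, I would decompose $\mathcal{L}_{\alpha}^{\widetilde{\mathcal{G}}_{n,a}(q)}$ into four independent sub-Poisson point processes according to which of $\lhd_{n}(a),\rhd_{n}(q)$ each loop visits: (i) neither, (ii) only $\lhd_{n}(a)$, (iii) only $\rhd_{n}(q)$, (iv) both. This is immediate from the additive splitting of $\mu^{\widetilde{\mathcal{G}}_{n,a}(q)}$ over these disjoint events. The conditioning $\mathcal{N}_{\alpha}(\lhd_{n}(a),\rhd_{n}(q))=0$ is exactly the event that family (iv) is empty, so enforcing it preserves the independence of the first three families. The further conditioning $\widehat{\mathcal{L}}^{\lhd_{n}(a)}_{n,a,q,\alpha}=u$, $\widehat{\mathcal{L}}^{\rhd_{n}(q)}_{n,a,q,\alpha}=v$ concerns only families (ii) and (iii) separately, since loops in (i) contribute no local time at either identified vertex. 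A loop in family (i) is supported on $\widetilde{\mathcal{G}}_{n,a}(q)\setminus\{\lhd_{n}(a),\rhd_{n}(q)\}$, whose killed Brownian motion coincides with that on $\widetilde{\mathtt{H}}_{n}$ (the identified vertices are themselves killing vertices of $\widetilde{\mathtt{H}}_{n}$); therefore the law of (i) is $\mathcal{L}^{\widetilde{\mathtt{H}}_{n}}_{\alpha}$.

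The main step is to describe family (ii) (and symmetrically (iii)). I would reroot each loop visiting $\lhd_{n}(a)$ at $\lhd_{n}(a)$ and express it as the concatenation of its excursions back to $\lhd_{n}(a)$. By the Itô-type representation of Poisson ensembles of Markov loops passing through a fixed point (see Le Jan \cite{LeJan2011Loops}, Chapter 7), conditional on the total local time at $\lhd_{n}(a)$ being $u$, the superposition of all these excursions is a Poisson point process on excursion space of intensity $u\cdot\bar\nu_{\lhd_{n}(a)}$, where $\bar\nu_{\lhd_{n}(a)}$ is the Itô excursion measure at $\lhd_{n}(a)$ in $\widetilde{\mathcal{G}}_{n,a}(q)$. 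By the Poisson restriction property, imposing $\mathcal{N}_{\alpha}=0$ on top just restricts $\bar\nu_{\lhd_{n}(a)}$ to excursions avoiding $\rhd_{n}(q)$, and these are precisely the excursions of $B^{\widetilde{\mathtt{H}}_{n}}$ starting from and killed upon reaching a vertex in $[-a,0]\times\{0\}$ (rather than $[1,q]\times\{0\}$ or the rest of the killing boundary).

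The remaining step — and the main obstacle — is to identify this restricted measure with $\tfrac{n}{8\pi}\,\nu_{\rm exc}^{\widetilde{\mathtt{H}}_{n}}([-a,0])$. Splitting $\bar\nu_{\lhd_{n}(a)}$ according to the starting vertex $x\in ([-a,0]\cap\tfrac{1}{n}\mathbb{Z})\times\{0\}$, the summand at $x$ is an entrance-law measure that can be read as a $\tfrac{1}{\varepsilon}\mathbb{P}^{\widetilde{\mathtt{H}}_{n}}_{x+i\varepsilon}$ limit, which matches the defining limit of $\nu_{\rm exc}^{\widetilde{\mathtt{H}}_{n}}(x\to[-a,0])$. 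The overall prefactor is then obtained by matching the local-time normalization implicit in Le Jan's loop measure \eqref{DefLoopMeasure} for a metric graph of edge conductance $n/2$ (cf.\ \eqref{LengthConductance}) against the normalization \eqref{DefExcInfty0} of $\nu_{\rm exc}^{\widetilde{\mathtt{H}}_{n}}([-a,0])$, producing precisely the factor $\tfrac{n}{8\pi}$. The rest is bookkeeping.
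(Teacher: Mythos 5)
Your proposal is correct and follows essentially the same route as the paper, whose own proof simply invokes the restriction property (loops avoiding a set form the loop-soup of the process killed on that set), the representation of loops through a fixed point as a Poisson point process of excursions conditionally on the local time there, and the observation that the factor $\tfrac{n}{8\pi}$ is just the inverse of the normalisation $\tfrac{8\pi}{n}$ built into \eqref{DefExcInfty0}--\eqref{DefExc1q}. Your additional bookkeeping (the four-way independent splitting, the factorisation of the conditioning once $\mathcal{N}_{\alpha}=0$ forces the fourth family to be empty) is a faithful expansion of the same argument.
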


\begin{proof}
This follows from universal properties of loop-soups.
The subset of loops that do not visit a given set $F'$ is distributed like the loop-soup of the same Markov process, but 
with additional killing at hitting $F'$ (restriction property). The loops that visit a particular point $z$ can be represented by a Poisson point process of Markovian excursions outside $z$.
See for instance \cite{LeJan2011Loops}, Sections 2.2, 2.3, 7.1, 7.2, 7.3, and \cite{LawlerLimic2010RW}, 
Propositions 9.3.1 and 9.4.1. 
The factor $\frac{n}{8\pi}$ in $\frac{n}{8\pi}u\nu_{\rm exc}^{\widetilde{\mathtt{H}}_{n}}([-a,0])$ and 
$\frac{n}{8\pi}v\nu_{\rm exc}^{\widetilde{\mathtt{H}}_{n}}([1,q])$ comes from the normalisation factor $\frac{8\pi}{n}$ in the definition of $\nu_{\rm exc}^{\widetilde{\mathtt{H}}_{n}}([-a,0])$ (\eqref{DefExcInfty0}) and $\nu_{\rm exc}^{\widetilde{\mathtt{H}}_{n}}([1,q])$ (\eqref{DefExc1q}).
\end{proof}

\begin{proposition}
\label{PropProbMetrGraph}
Let $u,v>0$, $q>1$ and $n\geq 1$.
\begin{equation}
\label{ProbCondCable}
p^{\widetilde{\mathtt{H}}_{n}}_{1/2,u,v}(q)=1-e^{-2C^{\rm eq}_{n}(q)\frac{8\pi\sqrt{uv}}{n}}.
\end{equation}
\begin{equation}
\label{LimProbCable}
\lim_{n\rightarrow +\infty}p^{\widetilde{\mathtt{H}}_{n}}_{1/2,u,v}(q)=
1-q^{-2\sqrt{uv}}.
\end{equation}
\end{proposition}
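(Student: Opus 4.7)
The plan is to identify the event defining $p^{\widetilde{\mathtt{H}}_{n}}_{1/2,u,v}(q)$ as a cluster-connection event in the quotient metric graph $\widetilde{\mathcal{G}}_{n,a}(q)$ for large $a$, and then invoke Lemmas \ref{LemProbNoConx} and \ref{LemRepLoopsExc}. First I would consider $\mathcal{L}_{1/2}^{\widetilde{\mathcal{G}}_{n,a}(q)}$ conditioned on $\widehat{\mathcal{L}}^{\lhd_{n}(a)}_{n,a,q,1/2}=\tfrac{8\pi u}{n}$, $\widehat{\mathcal{L}}^{\rhd_{n}(q)}_{n,a,q,1/2}=\tfrac{8\pi v}{n}$, and $\mathcal{N}_{1/2}(\lhd_{n}(a),\rhd_{n}(q))=0$. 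By Lemma \ref{LemRepLoopsExc} this conditional configuration is the superposition of three independent pieces: a copy of $\mathcal{L}^{\widetilde{\mathtt{H}}_n}_{1/2}$; a Poisson point process of excursions outside $\lhd_n(a)$ of intensity $\tfrac{n}{8\pi}\cdot\tfrac{8\pi u}{n}\,\nu_{\rm exc}^{\widetilde{\mathtt{H}}_n}([-a,0])=u\,\nu_{\rm exc}^{\widetilde{\mathtt{H}}_n}([-a,0])$; and similarly a Poisson point process of intensity $v\,\nu_{\rm exc}^{\widetilde{\mathtt{H}}_n}([1,q])$. Under this coupling, the event that $\lhd_{n}(a)$ and $\rhd_{n}(q)$ belong to the same cluster of $\mathcal{L}_{1/2}^{\widetilde{\mathcal{G}}_{n,a}(q)}$ is precisely the $[-a,0]$-truncated analogue of the event defining $p^{\widetilde{\mathtt{H}}_{n}}_{1/2,u,v}(q)$; denote its probability by $p^{\widetilde{\mathtt{H}}_{n},a}_{1/2,u,v}(q)$.

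Next, I would apply Lemma \ref{LemProbNoConx} directly in $\widetilde{\mathcal{G}}_{n,a}(q)$ with $x=\lhd_{n}(a)$, $y=\rhd_{n}(q)$ and the prescribed local-time values $\tfrac{8\pi u}{n}$, $\tfrac{8\pi v}{n}$, which immediately yields
\begin{displaymath}
1-p^{\widetilde{\mathtt{H}}_{n},a}_{1/2,u,v}(q)=\exp\!\left(-2C^{\rm eq}_{n,a}(q)\,\sqrt{\tfrac{8\pi u}{n}\cdot\tfrac{8\pi v}{n}}\right)=\exp\!\left(-2C^{\rm eq}_{n,a}(q)\,\tfrac{8\pi\sqrt{uv}}{n}\right).
\end{displaymath}
Letting $a\to+\infty$, I would use that $C^{\rm eq}_{n,a}(q)\uparrow C^{\rm eq}_{n}(q)$ (from the discussion preceding Lemma \ref{LemCeqHP}) and that the excursion intensity $u\,\nu_{\rm exc}^{\widetilde{\mathtt{H}}_n}([-a,0])$ increases to $u\,\nu_{\rm exc}^{\widetilde{\mathtt{H}}_n}((-\infty,0])$. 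Since adding more independent excursions can only create more connections, $p^{\widetilde{\mathtt{H}}_{n},a}_{1/2,u,v}(q)\uparrow p^{\widetilde{\mathtt{H}}_{n}}_{1/2,u,v}(q)$, and passing to the limit in the above identity gives \eqref{ProbCondCable}.

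Finally, \eqref{LimProbCable} is an immediate consequence of Lemma \ref{LemCeqHP}: since
\begin{displaymath}
2C^{\rm eq}_{n}(q)\,\tfrac{8\pi\sqrt{uv}}{n}=16\pi\sqrt{uv}\cdot\tfrac{C^{\rm eq}_{n}(q)}{n}\;\xrightarrow[n\to\infty]{}\;2\sqrt{uv}\,\log q,
\end{displaymath}
the exponential converges to $q^{-2\sqrt{uv}}$, so $p^{\widetilde{\mathtt{H}}_{n}}_{1/2,u,v}(q)\to 1-q^{-2\sqrt{uv}}$. The main technical points to be careful with are the matching of intensities (the factor $\tfrac{8\pi}{n}$ in the definitions \eqref{DefExcInfty0}--\eqref{DefExc1q} cancels exactly against the $\tfrac{n}{8\pi}$ produced by Lemma \ref{LemRepLoopsExc}, which is what forces the choice of local-time values $\tfrac{8\pi u}{n}$ and $\tfrac{8\pi v}{n}$) and the monotone passage to the limit $a\to+\infty$; everything else is a direct invocation of the previously established lemmas.
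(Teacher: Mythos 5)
Your proposal is correct and follows essentially the same route as the paper: condition the loop-soup on the quotient graph $\widetilde{\mathcal{G}}_{n,a}(q)$ on the occupation values $\tfrac{8\pi u}{n}$, $\tfrac{8\pi v}{n}$ and no direct connections, use Lemma \ref{LemRepLoopsExc} to identify the connection event, apply Lemma \ref{LemProbNoConx}, let $a\to+\infty$, and conclude with Lemma \ref{LemCeqHP}. Your added care about the monotone passage $a\to+\infty$ and the cancellation of the $\tfrac{8\pi}{n}$ normalisation is a slightly more explicit rendering of steps the paper leaves implicit, not a different argument.
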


\begin{proof}
Let $a>0$. Consider three independent Poisson point processes:
\begin{itemize}
\item a loop-soup $\mathcal{L}^{\widetilde{\mathtt{H}}_{n}}_{1/2}$,
\item a P.p.p of excursions of intensity $u\nu_{\rm exc}^{\widetilde{\mathtt{H}}_{n}}([-a,0])$,
\item a P.p.p of excursions of intensity $v\nu_{\rm exc}^{\widetilde{\mathtt{H}}_{n}}([1,q])$.
\end{itemize}
The probability for the two P.p.p. of excursions to be connected either directly or through a cluster of
$\mathcal{L}^{\widetilde{\mathtt{H}}_{n}}_{1/2}$ equals, according to Lemma \ref{LemRepLoopsExc}, the
probability for $\lhd_{n}(a)$ and $\rhd_{n}(q)$ to be in the same cluster of
$\mathcal{L}_{1/2}^{\widetilde{\mathcal{G}}_{n,a}(q)}$ conditional on
$\widehat{\mathcal{L}}^{\lhd_{n}(a)}_{n,a,q,1/2}=\frac{8\pi}{n}u$,
$\widehat{\mathcal{L}}^{\rhd_{n}(q)}_{n,a,q,1/2}=\frac{8\pi}{n}v$ and
$\mathcal{N}_{1/2}(\lhd_{n}(a),\rhd_{n}(q))=0$. According to Lemma \ref{LemProbNoConx} this probability equals
\begin{displaymath}
1-e^{-2C^{\rm eq}_{n,a}(q)\frac{8\pi\sqrt{uv}}{n}}.
\end{displaymath}
Taking the limit as $a$ tends to infinity we get \eqref{ProbCondCable}. Using Lemma \ref{LemCeqHP} we get the limit
\eqref{LimProbCable}.
\end{proof}

\section{Computations on continuum half-plane}
\label{secContHP}

On the continuum upper half plane $\mathbb{H}$ we consider two independent Poisson point processes:
\begin{itemize}
\item a Brownian loop-soup $\mathcal{L}^{\mathbb{H}}_{\alpha}$, $0<\alpha\leq 1/2$,
\item a P.p.p. of Brownian excursions from and to 
$(-\infty,0]\times\lbrace 0\rbrace$, 
$\mathcal{E}^{\mathbb{H}}_{u}((-\infty,0])$, $u>0$. 
\end{itemize}
We will consider the clusters made out of loops in $\mathcal{L}^{\mathbb{H}}_{\alpha}$ and excursions in $\mathcal{E}^{\mathbb{H}}_{u}((-\infty,0])$. Among these clusters we only take the clusters that contain at least one excursion and consider the rightmost envelop of these clusters. This envelop is a non self-intersecting curve joining $\mathbb{R}$ to infinity. It can be formally defined as follows. Take the clusters that contain at least one excursion.
The curve minus its starting point on $\mathbb{R}$ is the right-most component of the boundary in $\mathbb{H}$ of the closure in $\mathbb{H}$ of the set of points visited by the above clusters.

All the excursions $\mathcal{E}^{\mathbb{H}}_{u}((-\infty,0])$ are located left to the curve and there are only clusters made of loops right to it. According to \cite{Werner2003SLeasBound} and \cite{WernerWu2013FromCLEtoSLE} this boundary curve is an $\hbox{SLE}(\kappa,\rho)$ starting from $0$, where $\kappa$ is given by \eqref{ckappa} and $\rho$ by
\begin{displaymath}
u=\dfrac{(\rho+2)(\rho+6-\kappa)}{4\kappa}.
\end{displaymath}
We will define
\begin{equation}
\label{Defu0c}
u_{0}(\alpha):=\dfrac{6-\kappa(\alpha)}{2\kappa(\alpha)}.
\end{equation}
We will consider the particular case
$u=u_{0}(\alpha)$ (and  thus $\rho=0$), 
which is simpler to deal with.
$\hbox{SLE}(\kappa,\rho)$ is then a chordal $\hbox{SLE}_{\kappa}$ curve starting from $0$. For a description of $\hbox{SLE}$ processes see \cite{Werner2004SLEStFlour}. We will denote by $(\xi_{t})_{t\geq 0}$ this curve. $\xi_{0}=0$. It does not touch $\mathbb{R}$ at positive times. See Figure \ref{CEInterface}.

\begin{figure}[ht]
\begin{center}
  \includegraphics[scale=0.5]{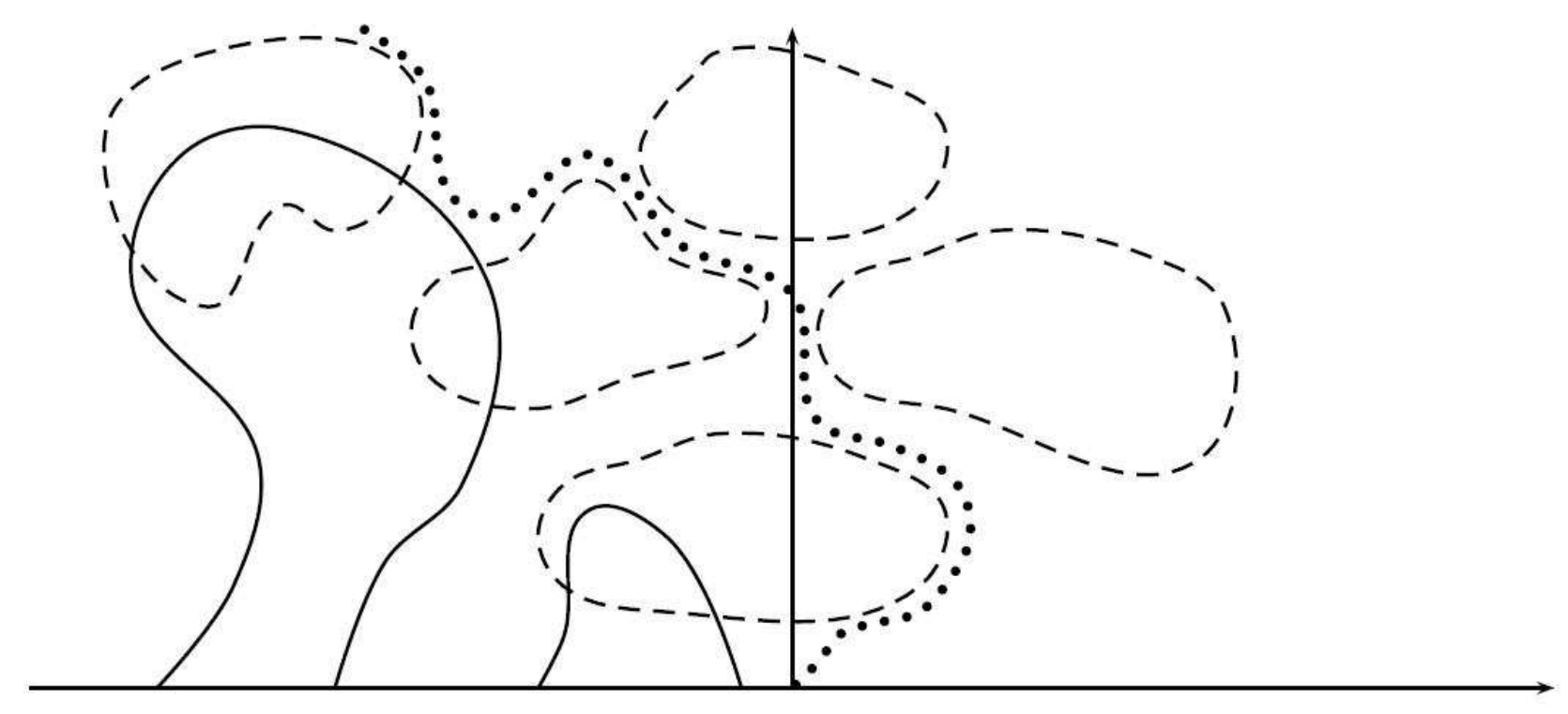}
  \end{center}
  \caption{Full lines represent Brownian excursions in $\mathcal{E}^{\mathbb{H}}_{u}((-\infty,0])$.
  Dashed lines represent
contours in 
$\mathcal{F}_{\rm ext}(\mathcal{L}_{\alpha}^{\mathbb{H}})$.
The dotted line represents $\xi$.}
  \label{CEInterface}
\end{figure}

There is only one conformal map $g_{t}$ that sends $\mathbb{H}\setminus \xi([0,t])$ (half-plane minus the curve up to time $t$) onto $\mathbb{H}$ and that is normalised at infinity $z\rightarrow \infty$ as
\begin{displaymath}
g_{t}(z)=z+\dfrac{a_{t}}{z}+o(z^{-1}).
\end{displaymath}
Moreover, one parametrises the curve by half-plane capacity 
($a_{t}= 2t$).
The Loewner flow $(g_{t})_{t\geq 0}$ satisfies the differential equation
\begin{displaymath}
\dfrac{\partial g_{t}(z)}{\partial t}=\dfrac{2}{g_{t}(z)-\sqrt{\kappa} W_{t}},
\end{displaymath}
where $(W_{t})_{t\geq 0}$ is a standard Brownian motion on $\mathbb{R}$.

\begin{lemma}
\label{LemSLE}
Let $\alpha\in (0,1/2]$.
$p^{\mathbb{H}}_{\alpha,u_{0}(\alpha),v}(q)$ equals the probability that an excursion from 
$\mathcal{E}^{\mathbb{H}}_{v}([1,q])$ intersects an independent 
$\hbox{SLE}_{\kappa(\alpha)}$ curve.
\end{lemma}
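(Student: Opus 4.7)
The plan is to identify almost surely the event whose probability is $p^{\mathbb{H}}_{\alpha,u_{0}(\alpha),v}(q)$ with the event that $\mathcal{E}^{\mathbb{H}}_{v}([1,q])$ meets the curve $\xi$, and then to conclude via the independence of $\mathcal{E}^{\mathbb{H}}_{v}([1,q])$ from the pair $(\mathcal{L}^{\mathbb{H}}_{\alpha},\mathcal{E}^{\mathbb{H}}_{u_{0}(\alpha)}((-\infty,0]))$ (and hence from $\xi$). Let $K$ be the union of $\mathcal{E}^{\mathbb{H}}_{u_{0}(\alpha)}((-\infty,0])$ together with all loops of $\mathcal{L}^{\mathbb{H}}_{\alpha}$ whose clusters contain at least one of those excursions, so that by definition $p^{\mathbb{H}}_{\alpha,u_{0}(\alpha),v}(q)=\mathbb{P}(A)$ where $A$ is the event that some excursion in $\mathcal{E}^{\mathbb{H}}_{v}([1,q])$ meets $K$. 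Let $B$ be the event that some excursion in $\mathcal{E}^{\mathbb{H}}_{v}([1,q])$ meets $\xi$. By the Werner--Wu identification recalled just above, $\xi$ is a chordal $\hbox{SLE}_{\kappa(\alpha)}$ (the value $u_{0}(\alpha)$ being precisely the one making $\rho=0$), so once $A=B$ almost surely is proved, the lemma follows.

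For $\kappa(\alpha)\in(8/3,4]$, $\xi$ is a.s.\ a simple curve that does not touch $(0,+\infty)$ and divides $\mathbb{H}$ into two open Jordan domains $\mathbb{H}_{L}$ (containing $K\cap\mathbb{H}$) and $\mathbb{H}_{R}$ (containing $(0,+\infty)$). The inclusion $A\subset B$ is easy: an excursion $e\in\mathcal{E}^{\mathbb{H}}_{v}([1,q])$ has its endpoints in $[1,q]\subset\partial\mathbb{H}_{R}$, so if it meets $K\subset\overline{\mathbb{H}_{L}}$ then by continuity it must cross $\xi$.

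The reverse inclusion $B\subset A$ is where the real work lies, and it is the main obstacle I foresee. When $e$ hits $\xi$, standard properties of planar Brownian motion imply that $e$ spends a positive amount of time strictly inside $\mathbb{H}_{L}$ in any neighbourhood of the hitting time. The Werner--Wu description of $\xi$ as the right envelope of $K$ in the $\rho=0$ case tells us that, conditionally on $\xi$, the $\alpha$-loop-soup restricted to $\mathbb{H}_{L}$, together with $\mathcal{E}^{\mathbb{H}}_{u_{0}(\alpha)}((-\infty,0])$, accumulates at every point of $\xi$ from the $\mathbb{H}_{L}$ side. It then remains to upgrade ``$e$ enters $\mathbb{H}_{L}$'' to ``$e$ meets at least one curve of $K$''; this should reduce to the standard statement that an independent planar Brownian path spending positive time in a subdomain hosting a Brownian loop-soup of positive intensity a.s.\ intersects at least one of its loops, combined with the fact that all loops in the restricted soup lying in $\mathbb{H}_{L}$ belong to the left cluster $K$ in the $\rho=0$ configuration.

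Combining $A\subset B$, $B\subset A$ and the independence of $\mathcal{E}^{\mathbb{H}}_{v}([1,q])$ from $\xi$ then yields $p^{\mathbb{H}}_{\alpha,u_{0}(\alpha),v}(q)=\mathbb{P}(B)$, which is precisely the probability that an excursion of $\mathcal{E}^{\mathbb{H}}_{v}([1,q])$ meets an independent $\hbox{SLE}_{\kappa(\alpha)}$ curve.
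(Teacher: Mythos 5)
Your overall strategy is the same as the paper's: identify the connection event with the event that $\mathcal{E}^{\mathbb{H}}_{v}([1,q])$ hits the curve $\xi$, prove the two inclusions, and conclude by independence. The inclusion $A\subset B$ is fine and matches the paper's (contrapositive) argument. The gap is in $B\subset A$, exactly where you flag ``the real work.'' Your proposed mechanism --- $e$ spends positive time in $\mathbb{H}_{L}$, hence hits a loop of the soup there, and every such loop belongs to $K$ --- fails at the last step: it is simply false that all loops of $\mathcal{L}^{\mathbb{H}}_{\alpha}$ lying in $\mathbb{H}_{L}$ belong to $K$. The curve $\xi$ is the right envelope of the clusters containing at least one left excursion; $\mathbb{H}_{L}$ also contains (a.s.\ infinitely many) loop-only clusters that touch no excursion, for instance small clusters near the negative real axis or trapped in fjords of $K$. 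So hitting a loop on the left side does not give the connection event. Moreover, ``$K$ accumulates at every point of $\xi$ from the left'' cannot by itself be upgraded to ``$e$ hits $K$'': an independent planar Brownian path does not hit every set that accumulates at a point it visits. A secondary issue is that, conditionally on $\xi$, the loops on the left of $\xi$ are not distributed as an unconditioned loop-soup in $\mathbb{H}_{L}$, so the ``standard statement'' you invoke does not apply directly.

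The paper closes this gap with a topological argument you should adopt. If $e$ hits $\xi$ at a point $z_{0}$ with $\Im(z_{0})>0$, then by independence and standard winding properties of planar Brownian paths, $e$ a.s.\ makes a closed loop around the disc $D(z_{0},\varepsilon)$ for some small $\varepsilon>0$, disconnecting that disc from infinity. Since $z_{0}$ lies on the boundary of the closure of $\bigcup K$, some excursion of $\mathcal{E}^{\mathbb{H}}_{u_{0}(\alpha)}((-\infty,0])$, or some loop joined to such an excursion by a finite chain of pairwise-intersecting loops, enters $D(z_{0},\varepsilon)$. The union of that finite chain with the terminal excursion is a compact connected set meeting both $D(z_{0},\varepsilon)$ and $\mathbb{R}$, hence it must cross the closed loop traced by $e$; therefore $e$ intersects an element of $K$, which is exactly the connection event. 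Replacing your occupation-time step by this disconnection argument repairs the proof.
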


\begin{proof}
Let $\xi$ be the $\hbox{SLE}_{\kappa(\alpha)}$ 
curve constructed from
$\mathcal{L}_{\alpha}^{\mathbb{H}}$ and
$\mathcal{E}^{\mathbb{H}}_{u_{0}(\alpha)}((-\infty,0])$, independent from 
$\mathcal{E}^{\mathbb{H}}_{v}([1,q])$.
If no excursion from $\mathcal{E}^{\mathbb{H}}_{v}([1,q])$
intersects $\xi$, then these excursions are all on the right side of $\xi$ and by definition of $\xi$, can only intersects loops in $\mathcal{L}_{\alpha}^{\mathbb{H}}$ that are not connected to 
$\mathcal{E}^{\mathbb{H}}_{u_{0}(\alpha)}((-\infty,0])$.

Conversely, assume that an excursion $\gamma$ from 
$\mathcal{E}^{\mathbb{H}}_{v}([1,q])$
intersects $\xi$ at a point $z_{0}$. Then $\Im(z_{0})>0$.
Since $\mathcal{E}^{\mathbb{H}}_{v}([1,q])$ and 
$\xi$ are independent, by the properties of sample Brownian paths, there is $\varepsilon>0$ small enough such that $\gamma$ makes a closed loop around the disc with center $z_{0}$ and radius $\varepsilon$, disconnecting it from infinity. Thus, any connected set that intersects both this disc and the real line, has to intersect $\gamma$. By the definition of $\xi$, there is either an excursion from 
$\mathcal{E}^{\mathbb{H}}_{u_{0}(\alpha)}((-\infty,0])$, or
a loop from $\mathcal{L}_{\alpha}^{\mathbb{H}}$ connected by a finite chain to an excursion from 
$\mathcal{E}^{\mathbb{H}}_{u_{0}(\alpha)}((-\infty,0])$, 
that intersects the $\varepsilon$-neighbourhood of $z_{0}$. Denote this excursion or loop by $\gamma'$. In the first case, the excursion $\gamma'$ intersects 
$\gamma$. In the second case, an element from the chain connecting $\gamma'$
to $\mathcal{E}^{\mathbb{H}}_{u_{0}(\alpha)}((-\infty,0])$
intersects $\gamma$.
\end{proof}

The excursions $\mathcal{E}^{\mathbb{H}}_{v}([1,q])$ satisfy the one-sided conformal restriction property 
(see \cite{Werner2005ConfRestr}, Section 8, and 
\cite{Werner2005ConfRestr}, Section 4, in particular Section 4.3 ): if $K$ is a compact subset of $\mathbb{C}$ that does not intersect $[1,q]\times\lbrace 0\rbrace$ and such that $\mathbb{H}\setminus K$ is simply connected, if $f$ is a conformal map from $\mathbb{H}\setminus K$ onto $\mathbb{H}$ such that $f(1)<f(q)\in\mathbb{R}$, then the probability that $\mathcal{E}^{\mathbb{H}}_{v}([1,q])$ does not intersect $K$ equals
\begin{displaymath}
\left(\dfrac{f'(1)f'(q)(q-1)^{2}}{(f(q)-f(1))^{2}}\right)^{v}.
\end{displaymath}
Moreover, conditional on this event, the law of $f(\mathcal{E}^{\mathbb{H}}_{v}([1,q]))$
is $\mathcal{E}^{\mathbb{H}}_{v}([f(1),f(q)])$, up to a change of parametrisation of the excursions. From this conformal restriction property, it immediately follows:

\begin{lemma}
\label{LemCondSLErestr}
Let $\kappa\in(0,4]$. Let $(\xi_{t})_{t\geq 0}$ be an $\hbox{SLE}_{\kappa}$ with the driving Brownian motion 
$(\sqrt{\kappa}W_{t})_{t\geq 0}$ and Loewner flow $(g_{t})_{t\geq 0}$.  Denote by $g'_{t}$ the derivative of $g_{t}$ with respect the complex variable:
\begin{displaymath}
g'_{t}(z)=\dfrac{\partial g_{t}(z)}{\partial z}.
\end{displaymath}
Denote by $\bar{p}_{\kappa,v}(q)$ the probability that an independent family of excursions $\mathcal{E}^{\mathbb{H}}_{v}([1,q])$ does not intersect $\xi$. Then the conditional probability of the event that $\mathcal{E}^{\mathbb{H}}_{v}([1,q])$ does not intersect $\xi$ conditional on $(\xi_{s})_{0\leq s\leq t}$ (or equivalently conditional on
$(W_{s})_{0\leq s\leq t}$) and on not intersecting $(\xi_{s})_{0\leq s\leq t}$ equals
\begin{equation}
\label{CondProbSLE1}
\bar{p}_{\kappa,v}\left(\dfrac{g_{t}(q)-\sqrt{\kappa} W_{t}}{g_{t}(1)-\sqrt{\kappa} W_{t}}\right).
\end{equation}
The conditional probability of the event that $\mathcal{E}^{\mathbb{H}}_{v}([1,q])$ does not intersect $\xi$ conditional on $(\xi_{s})_{0\leq s\leq t}$ is
\begin{equation}
\label{CondProbSLE2}
\left(\dfrac{g_{t}'(1)g_{t}'(q)(q-1)^{2}}{(g_{t}(q)-g_{t}(1))^{2}}\right)^{v}
\bar{p}_{\kappa,v}\left(\dfrac{g_{t}(q)-\sqrt{\kappa} W_{t}}{g_{t}(1)-\sqrt{\kappa} W_{t}}\right).
\end{equation}
In particular, for all $t\geq 0$,
\begin{equation}
\label{IntegralEq}
\bar{p}_{\kappa,v}(q)=\mathbb{E}\left[\left(\dfrac{g_{t}'(1)g_{t}'(q)(q-1)^{2}}{(g_{t}(q)-g_{t}(1))^{2}}\right)^{v}
\bar{p}_{\kappa,v}\left(\dfrac{g_{t}(q)-\sqrt{\kappa} W_{t}}{g_{t}(1)-\sqrt{\kappa} W_{t}}\right)\right].
\end{equation}
\end{lemma}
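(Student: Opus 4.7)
The plan is to combine three ingredients that are essentially already at hand: the one-sided conformal restriction property of the excursion soup recalled in the paragraph above the lemma, the domain Markov property of $\hbox{SLE}_{\kappa}$ associated with the Loewner flow $(g_{t})_{t\ge 0}$, and scale invariance. Throughout, set $K_{t}:=\xi([0,t])$ and let $A_{t}$ denote the event that $\mathcal{E}^{\mathbb{H}}_{v}([1,q])$ does not intersect $K_{t}$.

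To prove \eqref{CondProbSLE1}, first condition on $(\xi_{s})_{0\le s\le t}$. Since $\kappa\le 4$, the curve is simple and avoids $\mathbb{R}$ at positive times, so $\mathbb{H}\setminus K_{t}$ is simply connected, disjoint from $[1,q]\times\{0\}$, and $g_{t}$ extends continuously to $[1,q]$ with $g_{t}(1)<g_{t}(q)$. Because $\xi$ and $\mathcal{E}^{\mathbb{H}}_{v}([1,q])$ are independent, conditionally on $(\xi_{s})_{0\le s\le t}$ the excursion family retains its original law. Applying the conditional form of the restriction property with the now deterministic $K=K_{t}$ and $f=g_{t}$, conditionally on $A_{t}$ the image $g_{t}(\mathcal{E}^{\mathbb{H}}_{v}([1,q]))$ has the law of $\mathcal{E}^{\mathbb{H}}_{v}([g_{t}(1),g_{t}(q)])$ up to reparametrization. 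By the domain Markov property, $(g_{t}(\xi_{t+s})-\sqrt{\kappa}W_{t})_{s\ge 0}$ is an $\hbox{SLE}_{\kappa}$ from $0$ independent of $(\xi_{s})_{0\le s\le t}$. Since non-intersection is a parametrization-free property, on the event $A_{t}$ the excursion family avoids $\xi$ iff $\mathcal{E}^{\mathbb{H}}_{v}([g_{t}(1),g_{t}(q)])$ avoids this continuation $\hbox{SLE}_{\kappa}$ started at $\sqrt{\kappa}W_{t}$. Translating by $-\sqrt{\kappa}W_{t}$ and rescaling by $(g_{t}(1)-\sqrt{\kappa}W_{t})^{-1}>0$ (both operations preserve the law of the excursion soup and send an $\hbox{SLE}_{\kappa}$ to another $\hbox{SLE}_{\kappa}$ up to time change) reduces the problem to an independent $\hbox{SLE}_{\kappa}$ from $0$ and the family $\mathcal{E}^{\mathbb{H}}_{v}([1,\rho_{t}])$ with $\rho_{t}:=(g_{t}(q)-\sqrt{\kappa}W_{t})/(g_{t}(1)-\sqrt{\kappa}W_{t})>1$. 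This yields \eqref{CondProbSLE1}.

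Formula \eqref{CondProbSLE2} is then obtained by multiplying \eqref{CondProbSLE1} by the conditional probability of $A_{t}$ itself given $(\xi_{s})_{0\le s\le t}$, which by the one-sided conformal restriction property equals $\left(\dfrac{g_{t}'(1)g_{t}'(q)(q-1)^{2}}{(g_{t}(q)-g_{t}(1))^{2}}\right)^{v}$. Finally, \eqref{IntegralEq} follows by taking the expectation of \eqref{CondProbSLE2} and using the tower property together with $\bar{p}_{\kappa,v}(q)=\mathbb{P}(\mathcal{E}^{\mathbb{H}}_{v}([1,q])\cap\xi=\emptyset)$.

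The only subtlety worth flagging is the careful composition of two conditioning operations—the conformal restriction decomposition of the excursion soup through $g_{t}$ and the domain Markov decomposition of $\xi$ at time $t$—together with the verification that $\rho_{t}>1$, which holds because $\xi$ is simple, $g_{t}$ preserves the boundary orientation, and both $1$ and $q$ lie on the right side of $\xi$ on $A_{t}$. All other steps are direct invocations of recalled facts.
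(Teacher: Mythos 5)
Your proof is correct and follows essentially the same route as the paper: apply the one-sided conformal restriction property of $\mathcal{E}^{\mathbb{H}}_{v}([1,q])$ through the map $g_{t}$, use that $(g_{t}(\xi_{t+s}))_{s\geq 0}$ is a chordal $\hbox{SLE}_{\kappa}$ from $\sqrt{\kappa}W_{t}$, reduce to the reference configuration by translation and scaling, multiply by the restriction factor for \eqref{CondProbSLE2}, and take expectations for \eqref{IntegralEq}. The paper's proof is just a terser version of the same argument; your extra checks (simple connectivity of $\mathbb{H}\setminus K_{t}$, positivity of $g_{t}(1)-\sqrt{\kappa}W_{t}$, $\rho_{t}>1$) are correct and welcome detail.
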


\begin{proof}
\eqref{CondProbSLE1} is the conditional probability that 
$g_{t}(\mathcal{E}^{\mathbb{H}}_{v}([1,q]))$ does not intersect 
\\$(g_{t}(\xi_{t+s}))_{s\geq 0}$. To express it we used the fact that $g_{t}(\mathcal{E}^{\mathbb{H}}_{v}([1,q]))$ has same law as 
\\$\mathcal{E}^{\mathbb{H}}_{v}([g_{t}(1),g_{t}(q)])$
and that $(g_{t}(\xi_{t+s}))_{s\geq 0}$ is a chordal $\hbox{SLE}_{\kappa}$ starting from $\sqrt{\kappa} W_{t}$. In 
\eqref{CondProbSLE2} we multiplied the conditional probability that $\mathcal{E}^{\mathbb{H}}_{v}([1,q])$ does not intersect $(\xi_{s})_{0\leq s\leq t}$ and the conditional probability that $g_{t}(\mathcal{E}^{\mathbb{H}}_{v}([1,q]))$
does not intersect $(g_{t}(\xi_{t+s}))_{s\geq 0}$.
\end{proof}

Next we derive the differential equation in $q$ satisfied by 
$\bar{p}_{\kappa,v}(q)$ on $(1,+\infty)$, provided 
$\bar{p}_{\kappa,v}$ is $\mathcal{C}^{2}$-regular.

\begin{lemma}
\label{LemDiffEq}
Let $\kappa\in(0,4]$, $v>0$ and $q>1$. Let $f$ be a bounded, $\mathcal{C}^{2}$ function on $(1,+\infty)$.
Then
\begin{displaymath}
\left(\dfrac{g_{t}'(1)g_{t}'(q)(q-1)^{2}}{(g_{t}(q)-g_{t}(1))^{2}}\right)^{v}
f\left(\dfrac{g_{t}(q)-\sqrt{\kappa} W_{t}}{g_{t}(1)-\sqrt{\kappa} W_{t}}\right)
\end{displaymath}
is a martingale if and only if $f$ satisfies the differential equation
\begin{equation}
\label{EqDiffSLEConfRestr}
f''+\dfrac{1}{(q-1)q}\left(\left(2-\dfrac{4}{\kappa}\right)q-\dfrac{4}{\kappa}\right)f'-\dfrac{4v}{\kappa q^{2}}f=0.
\end{equation}
\end{lemma}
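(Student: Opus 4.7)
The plan is a direct Itô-calculus computation. I introduce the three key processes
$X_t := g_t(1) - \sqrt{\kappa}\,W_t$, $Y_t := g_t(q) - \sqrt{\kappa}\,W_t$, and the ratio
$R_t := Y_t/X_t$, together with the prefactor
$A_t := \bigl(g'_t(1)g'_t(q)(q-1)^2 / (Y_t-X_t)^2\bigr)^v$. The Loewner equation gives
$dX_t = (2/X_t)\,dt - \sqrt{\kappa}\,dW_t$ and $dY_t = (2/Y_t)\,dt - \sqrt{\kappa}\,dW_t$, with $d\langle X,Y\rangle_t = d\langle X\rangle_t = d\langle Y\rangle_t = \kappa\,dt$. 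My goal is to expand $M_t := A_t\,f(R_t)$ via Itô's formula and to isolate the condition under which the drift vanishes, which must be the claimed ODE.

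First, I would compute $dR_t$ by Itô's formula applied to $Y/X$. After expressing everything in terms of $R_t$ and $X_t$, the drift of $R_t$ should come out to $(R_t-1)((\kappa-2)R_t-2)/(R_t X_t^2)\,dt$ (the quadratic factor factors cleanly because its discriminant is $(\kappa-4)^2$), and its martingale part to $\sqrt{\kappa}(R_t-1)/X_t\,dW_t$, whence $d\langle R\rangle_t = \kappa(R_t-1)^2/X_t^2\,dt$. Second, I would compute $d\log A_t$; since $Y_t-X_t = g_t(q)-g_t(1)$ has finite variation, and $\partial_t\log g'_t(z) = -2/(g_t(z)-\sqrt{\kappa}W_t)^2$, the process $A_t$ has no martingale part and a direct calculation yields $dA_t/A_t = -2v(R_t-1)^2/(R_t^2 X_t^2)\,dt$.

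Third, I would combine these by applying Itô to $M_t = A_t f(R_t)$. Because $A_t$ has no martingale part, the mixed bracket $\langle A, f(R)\rangle$ vanishes, so
\begin{equation*}
dM_t \;=\; A_t f'(R_t)\,dR_t + \tfrac{1}{2}A_t f''(R_t)\,d\langle R\rangle_t + f(R_t)\,dA_t,
\end{equation*}
and the martingale part reduces to $A_t f'(R_t)\sqrt{\kappa}(R_t-1)/X_t\,dW_t$. Collecting the drift, $M$ is a local martingale iff the bracket
\begin{equation*}
\frac{(R_t-1)((\kappa-2)R_t-2)}{R_t}\,f'(R_t) + \frac{\kappa(R_t-1)^2}{2}\,f''(R_t) - \frac{2v(R_t-1)^2}{R_t^2}\,f(R_t)
\end{equation*}
vanishes. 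Since $R_t$ traces out an open neighbourhood of every point of $(1,+\infty)$ with positive probability, this forces the bracket to be identically zero on $(1,+\infty)$. Dividing through by $\kappa(R-1)^2/2$ (noting that the $f'$ term contains a single factor of $(R-1)$) and simplifying should reproduce exactly equation \eqref{EqDiffSLEConfRestr}; boundedness of $f$ together with the boundedness of the Loewner prefactor up to any finite time then upgrades the local martingale to a true martingale.

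The only substantive step is the algebraic simplification of the drift: one must check that the factor $(\kappa-2)R^2 - \kappa R + 2$ arising in the drift of $R$ indeed factors as $(R-1)((\kappa-2)R-2)$, and that after dividing by $(R-1)^2$ the first-order coefficient becomes $\bigl((2-4/\kappa)R - 4/\kappa\bigr)/((R-1)R)$ as required. No new ideas beyond Itô's formula and the Loewner equation are needed.
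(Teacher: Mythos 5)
Your proposal is correct and follows essentially the same route as the paper: Itô's formula applied to the product of the finite-variation conformal-restriction prefactor and $f$ evaluated at the ratio process, with the drift set to zero. Your intermediate formulas (drift and bracket of the ratio, the logarithmic derivative of the prefactor, and the factorisation $(\kappa-2)R^{2}-\kappa R+2=(R-1)((\kappa-2)R-2)$) all agree with the paper's after rewriting its expressions, which are normalised by $g_{t}(q)-\sqrt{\kappa}W_{t}$ rather than your $X_{t}=g_{t}(1)-\sqrt{\kappa}W_{t}$, and your remarks on the true-martingale upgrade and on why the drift must vanish on all of $(1,+\infty)$ are sound.
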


\begin{proof}
Let
\begin{equation}
\label{NotatRtqt}
R_{t}:=\dfrac{g_{t}'(1)g_{t}'(q)(q-1)^{2}}{(g_{t}(q)-g_{t}(1))^{2}},
\qquad
q_{t}:=\dfrac{g_{t}(q)-\sqrt{\kappa} W_{t}}{g_{t}(1)-\sqrt{\kappa} W_{t}}.
\end{equation}
$R_{t}$ has bounded variation (in $t$). Let
\begin{displaymath}
M_{t}:=R_{t}^{v}f(q_{t}).
\end{displaymath}
We apply Itô's formula to $(M_{t})_{t\geq 0}$.
\begin{displaymath}
dM_{t}=R_{t}^{v}\left(v f(q_{t})\dfrac{dR_{t}}{R_{t}}+
f'(q_{t}) dq_{t}+\dfrac{1}{2}f''(q_{t}) d\langle q\rangle_{t}\right).
\end{displaymath}

Denote $\overline{\mathbb{H}}:=\lbrace\Im(z)\geq 0\rbrace$. For $z\in\overline{\mathbb{H}}\setminus \xi([0,t])$,
\begin{displaymath}
\dfrac{\partial g'_{t}(z)}{\partial t}=\dfrac{\partial}{\partial z}\left(\dfrac{\partial g_{t}(z)}{\partial t}\right)=
\dfrac{\partial}{\partial z}\left(\dfrac{2}{g_{t}(z)-\sqrt{\kappa} W_{t}}\right)=
\dfrac{-2g'_{t}(z)}{(g_{t}(z)-\sqrt{\kappa} W_{t})^{2}}.
\end{displaymath}
Thus
\begin{equation*}
\begin{split}
dR_{t}=&\bigg(\dfrac{-2g'_{t}(1)g'_{t}(q)(q-1)^{2}}{(g_{t}(1)-\sqrt{\kappa} W_{t})^{2}(g_{t}(q)-g_{t}(1))^{2}}
+\dfrac{-2g'_{t}(1)g'_{t}(q)(q-1)^{2}}{(g_{t}(q)-\sqrt{\kappa} W_{t})^{2}(g_{t}(q)-g_{t}(1))^{2}}
\\&+\dfrac{4g_{t}'(1)g_{t}'(q)(q-1)^{2}}{(g_{t}(1)-\sqrt{\kappa} W_{t})(g_{t}(q)-g_{t}(1))^{3}}
+\dfrac{-4g_{t}'(1)g_{t}'(q)(q-1)^{2}}{(g_{t}(q)-\sqrt{\kappa} W_{t})(g_{t}(q)-g_{t}(1))^{3}}\bigg) dt
\\=&-2R_{t}\bigg(\dfrac{1}{(g_{t}(1)-\sqrt{\kappa} W_{t})^{2}}+
\dfrac{1}{(g_{t}(q)-\sqrt{\kappa} W_{t})^{2}}\\&-
\dfrac{2}{(g_{t}(1)-\sqrt{\kappa} W_{t})(g_{t}(q)-\sqrt{\kappa} W_{t})}\bigg) dt
\\=&-2R_{t}\left(\dfrac{1}{g_{t}(1)-\sqrt{\kappa} W_{t}}-
\dfrac{1}{g_{t}(q)-\sqrt{\kappa} W_{t}}\right)^{2} dt
\\=&-2R_{t}\dfrac{(q_{t}-1)^{2}}{(g_{t}(q)-\sqrt{\kappa} W_{t})^{2}} dt.
\end{split}
\end{equation*}

Further
\begin{equation*}
\begin{split}
dq_{t}=&\sqrt{\kappa}\left(\dfrac{-1}{g_{t}(1)-\sqrt{\kappa} W_{t}}+
\dfrac{g_{t}(q)-\sqrt{\kappa} W_{t}}{(g_{t}(1)-\sqrt{\kappa} W_{t})^{2}}\right) dW_{t}
\\&+\bigg(\dfrac{2}{(g_{t}(q)-\sqrt{\kappa} W_{t})(g_{t}(1)-\sqrt{\kappa} W_{t})}-
2\dfrac{g_{t}(q)-\sqrt{\kappa} W_{t}}{(g_{t}(1)-\sqrt{\kappa} W_{t})^{3}}\\&
+\kappa\dfrac{g_{t}(q)-g_{t}(1)}{(g_{t}(1)-\sqrt{\kappa} W_{t})^{3}}\bigg) dt
\\=&\dfrac{\sqrt{\kappa}(q_{t}-1)q_{t}}{g_{t}(q)-\sqrt{\kappa} W_{t}} dW_{t}
+\dfrac{(q_{t}-1)q_{t}}{(g_{t}(q)-\sqrt{\kappa} W_{t})^{2}}((\kappa-2)q_{t}-2) dt.
\end{split}
\end{equation*}

\begin{displaymath}
d\langle q\rangle_{t}=\dfrac{\kappa (q_{t}-1)^{2}q_{t}^{2}}{(g_{t}(q)-\sqrt{\kappa} W_{t})^{2}} dt.
\end{displaymath}

Finally,
\begin{equation*}
\begin{split}
dM_{t}=&R_{t}^{v}f'(q_{t})\dfrac{\sqrt{\kappa}(q_{t}-1)q_{t}}{g_{t}(q)-\sqrt{\kappa} W_{t}} dW_{t}
+\dfrac{R_{t}^{v}(q_{t}-1)}{(g_{t}(q)-\sqrt{\kappa} W_{t})^{2}}\times
\\&\times\left(\dfrac{\kappa}{2}(q_{t}-1)q_{t}^{2}f''(q_{t})
+q_{t}((\kappa-2)q_{t}-2)f'(q_{t})-2v(q_{t}-1)f(q_{t})\right)dt.
\end{split}
\end{equation*}
It follows that $(M_{t})_{t\geq 0}$ is a local martingale (hence a true one, $f$ being bounded) if and only if
\begin{displaymath}
\dfrac{\kappa}{2}(q_{t}-1)q_{t}^{2}f''(q_{t})
+q_{t}((\kappa-2)q_{t}-2)f'(q_{t})
-2v(q_{t}-1)f(q_{t})\equiv 0,
\end{displaymath}
which gives the equation \eqref{EqDiffSLEConfRestr}.
\end{proof}

\eqref{EqDiffSLEConfRestr} is the differential equation for 
$\bar{p}_{\kappa,v}$. However, we do not know \textit{a priori} that 
$\bar{p}_{\kappa,v}$ is $\mathcal{C}^{2}$-regular. The idea is to show that both $\bar{p}_{\kappa,v}$ and a solution of \eqref{EqDiffSLEConfRestr} with right boundary conditions are fixed points of a contracting operator, and thus coincide. We will do this for the case $\kappa=4$ which interests us.

\begin{proposition}
\label{PropConvP}
Let $q>1$, $v>0$. 
\begin{displaymath}
\lim_{n\rightarrow +\infty}p^{\widetilde{\mathtt{H}}_{n}}_{1/2,u_{0}(1/2),v}(q)=
p^{\mathbb{H}}_{1/2,u_{0}(1/2),v}(q)=1-q^{-\sqrt{v}}.
\end{displaymath}
\end{proposition}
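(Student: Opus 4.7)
The plan is to first collapse the proposition to the identification of $\bar{p}_{4,v}$, then to guess an explicit solution of the ODE \eqref{EqDiffSLEConfRestr} at $\kappa=4$, and finally to close by a fixed‐point uniqueness argument for the integral equation \eqref{IntegralEq}. The metric-graph side is immediate: with $\alpha=1/2$, \eqref{ckappa} gives $\kappa(1/2)=4$ and \eqref{Defu0c} gives $u_0(1/2)=1/4$, so Proposition \ref{PropProbMetrGraph} yields
\begin{displaymath}
\lim_{n\to\infty}p^{\widetilde{\mathtt{H}}_n}_{1/2,1/4,v}(q)=1-q^{-2\sqrt{v/4}}=1-q^{-\sqrt{v}}.
\end{displaymath}
On the continuum side, Lemma \ref{LemSLE} gives $p^{\mathbb{H}}_{1/2,1/4,v}(q)=1-\bar{p}_{4,v}(q)$, so the proposition follows from $\bar{p}_{4,v}(q)=q^{-\sqrt{v}}$.

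Next I would identify the candidate explicitly. Specialising \eqref{EqDiffSLEConfRestr} to $\kappa=4$ collapses the coefficient of $f'$ to $1/q$ and reduces the equation to the Cauchy–Euler form
\begin{displaymath}
f''+\frac{1}{q}f'-\frac{v}{q^2}f=0,
\end{displaymath}
with fundamental solutions $q^{\sqrt{v}}$ and $q^{-\sqrt{v}}$. The bounded choice $f_0(q):=q^{-\sqrt{v}}$ is $\mathcal{C}^2$ on $(1,+\infty)$ with $0<f_0\leq 1$, $f_0(1^+)=1$ and $f_0(+\infty)=0$. By Lemma \ref{LemDiffEq}, the process $M_t:=R_t^v f_0(q_t)$ is a local martingale; the expression for $dR_t$ obtained inside the proof of that lemma shows $R_t$ is non-increasing with $R_0=1$, so $R_t\in(0,1]$ and $M_t$ is uniformly bounded, hence a true martingale. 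Taking expectations,
\begin{displaymath}
f_0(q)=\mathbb{E}_q\bigl[R_t^v f_0(q_t)\bigr]\qquad\text{for all } t\geq 0.
\end{displaymath}
Writing $(\mathcal{T}_t f)(q):=\mathbb{E}_q[R_t^v f(q_t)]$, this says $\mathcal{T}_t f_0=f_0$, while \eqref{IntegralEq} says $\mathcal{T}_t\bar{p}_{4,v}=\bar{p}_{4,v}$. So both functions are fixed points of every operator $\mathcal{T}_t$.

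The final step, and the main obstacle, is to show these two bounded fixed points coincide. Setting $h:=\bar{p}_{4,v}-f_0$, one has $h=\mathcal{T}_t h$ and therefore $\|h\|_\infty\leq\|h\|_\infty\sup_{q>1}\mathbb{E}_q[R_t^v]$, so it suffices to make this multiplier strictly less than $1$ uniformly in $q$ — which is not automatic, since $R_0=1$ for every starting $q$. The natural tool is the time change $d\tau=(g_t(q)-\sqrt{\kappa}W_t)^{-2}dt$: the SDEs written out inside the proof of Lemma \ref{LemDiffEq} show that under this change $q_\tau$ becomes a time-homogeneous diffusion on $(1,+\infty)$ and
\begin{displaymath}
R_\tau=\exp\!\Bigl(-2\int_0^{\tau}(q_s-1)^2\,ds\Bigr).
\end{displaymath}
Combining this representation with the scale-invariance of chordal SLE$_4$ (which reduces the sup over $q>1$ to a bounded range of reference configurations) and with the matching boundary values $\bar{p}_{4,v}(1^+)=f_0(1^+)=1$ and $\bar{p}_{4,v}(+\infty)=f_0(+\infty)=0$ (which one derives from the conformal-restriction formula displayed before Lemma \ref{LemCondSLErestr} by taking $q\to 1$ and $q\to +\infty$) yields the required uniform contraction bound. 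This forces $h\equiv 0$, i.e.\ $\bar{p}_{4,v}(q)=q^{-\sqrt{v}}$, and completes the proposition.
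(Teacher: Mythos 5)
Your reduction to $\bar{p}_{4,v}\equiv q^{-\sqrt{v}}$, the identification $u_{0}(1/2)=1/4$, the Cauchy--Euler solution $f_{0}(q)=q^{-\sqrt v}$, and the observation that both $f_{0}$ and $\bar p_{4,v}$ are fixed points of $\mathcal{T}_{t}f(q)=\mathbb{E}_{q}[R_{t}^{v}f(q_{t})]$ all coincide with the paper's proof. The gap is in your closing step. You correctly note that the naive estimate requires $\sup_{q>1}\mathbb{E}_{q}[R_{t}^{v}]<1$ and that this is ``not automatic''; in fact it is false. Since $g_{t}(q)-g_{t}(1)=g_{t}'(1)(q-1)+O((q-1)^{2})$, one has $R_{t}\rightarrow 1$ as $q\rightarrow 1^{+}$ for each fixed $t$, so $\sup_{q>1}\mathbb{E}_{q}[R_{t}^{v}]=1$ and no uniform contraction exists. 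Your proposed repair does not close this: $q$ is itself the scale-invariant parameter of the configuration $(0;1,q)$, so scale invariance of $\hbox{SLE}_{4}$ cannot ``reduce the sup over $q>1$ to a bounded range''; and the time-changed representation $R_{\tau}=\exp(-2\int_{0}^{\tau}(q_{s}-1)^{2}ds)$ only confirms the degeneracy, since starting from $q_{0}$ near $1$ the diffusion $q_{\tau}$ (whose coefficients vanish at $1$) lingers near $1$ and $R_{\tau}$ stays near $1$.

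The paper's fix is different and pointwise rather than uniform: it first proves that $\bar p_{4,v}$ is \emph{continuous} on $(1,+\infty)$ (left-continuity because a.s.\ no excursion of $\mathcal{E}^{\mathbb{H}}_{v}([1,q])$ has endpoint exactly $(q,0)$; right-continuity because a.s.\ some $\varepsilon>0$ exists with no excursion ending in $[q,q+\varepsilon)\times\lbrace 0\rbrace$ hitting the curve), and that $\bar p_{4,v}(1^{+})=1=f_{0}(1^{+})$ and $\bar p_{4,v}(+\infty)=0=f_{0}(+\infty)$. Hence $h:=f_{0}-\bar p_{4,v}$ is continuous and vanishes at both ends of $(1,+\infty)$, so $\vert h\vert$ attains its maximum at some interior point $\hat q$. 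Applying the fixed-point identity only at $\hat q$ gives $\vert h(\hat q)\vert\leq\mathbb{E}_{\hat q}[R_{t}^{v}]\,\vert h(\hat q)\vert$, and for that single fixed $\hat q$ and $t>0$ one has $R_{t}<1$ a.s., hence $\mathbb{E}_{\hat q}[R_{t}^{v}]<1$ and $h\equiv 0$. Note that your sketch asserts the boundary values but never establishes the continuity of $\bar p_{4,v}$ on $(1,+\infty)$, which is exactly what is needed to guarantee the maximum is attained; supplying that argument and replacing the uniform contraction by the evaluation at the maximizer turns your proposal into the paper's proof.
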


\begin{proof}
By definition
\begin{displaymath}
p^{\mathbb{H}}_{1/2,u_{0}(1/2),v}(q)=1-\bar{p}_{4,v}(q).
\end{displaymath}
According to Proposition \ref{PropProbMetrGraph},
\begin{displaymath}
\lim_{n\rightarrow +\infty}p^{\widetilde{\mathtt{H}}_{n}}_{1/2,u_{0}(1/2),v}(q)=
1-q^{-2\sqrt{u_{0}(1/2)v}}=1-q^{-\sqrt{v}}.
\end{displaymath}

Let $f_{v}(q):=q^{-\sqrt{v}}$. With $\kappa=4$, the ODE \eqref{EqDiffSLEConfRestr} becomes
\begin{displaymath}
f''+\dfrac{1}{q}f'-\dfrac{v}{q^{2}}f=0
\end{displaymath}
and it is satisfied by $f_{v}$. According to Lemma \ref{LemDiffEq}, $(R_{t}^{v}f_{v}(q_{t}))_{t\geq 0}$ is a martingale (we use the notations \eqref{NotatRtqt} and $\kappa=4$) for any initial value of $q_{0}$. 
In particular for any $t>0$
\begin{displaymath}
f_{v}(q_{0})=\mathbb{E}[R_{t}^{v}f_{v}(q_{t})].
\end{displaymath}
The same is true if we replace $f_{v}$ by $\bar{p}_{4,v}$ (\eqref{IntegralEq}). Thus,
\begin{equation}
\label{Contraction}
f_{v}(q_{0})-\bar{p}_{4,v}(q_{0})=\mathbb{E}[R_{t}^{v}(f_{v}(q_{t})-\bar{p}_{4,v}(q_{t}))]
\end{equation}
for any starting value of $q_{0}\in(1,+\infty)$ and $t>0$.

$\bar{p}_{4,v}$ is non-increasing on $(1,+\infty)$ with boundary limits 
\begin{displaymath}
\bar{p}_{4,v}(1)=1,\qquad 
\bar{p}_{4,v}(+\infty)=0.
\end{displaymath}
Moreover $\bar{p}_{4,v}$ is continuous. Indeed, let $q\in (1,+\infty)$. A.s.\ there is no excursion in $\mathcal{E}^{\mathbb{H}}_{v}([1,q])$ with endpoint $(q,0)$. This means that $\bar{p}_{4,v}$ is left-continuous at $q$.
Moreover, a.s.\ there is $\varepsilon>0$ such that there is no excursion in $\mathcal{E}^{\mathbb{H}}_{v}([1,q+\varepsilon)$ with an endpoint in $[q,q+\varepsilon)\times\lbrace 0\rbrace$ that intersects an independent $\hbox{SLE}_{4}$ curve.
This implies that $\bar{p}_{4,v}$ is right-continuous at $q$. From the continuity of $\bar{p}_{4,v}$ 
follows that there is $\hat{q}\in (1,+\infty)$ such that
\begin{displaymath}
\vert f_{v}(\hat{q})-\bar{p}_{4,v}(\hat{q})\vert = \max_{q\in (1,+\infty)} \vert f_{v}(q)-\bar{p}_{4,v}(q)\vert.
\end{displaymath}
Let $t>0$ and let $\hat{q}$ be the initial value $q_{0}$ of $(q_{s})_{s\geq 0}$. From
\eqref{Contraction} we get that
\begin{displaymath}
\vert f_{v}(\hat{q})-\bar{p}_{4,v}(\hat{q})\vert\leq\mathbb{E}[R_{t}^{v}]
\vert f_{v}(\hat{q})-\bar{p}_{4,v}(\hat{q})\vert.
\end{displaymath} 
But a.s.\ $R_{t}<1$ and $\mathbb{E}[R_{t}^{v}]<1$. This implies that
\begin{displaymath}
\vert f_{v}(\hat{q})-\bar{p}_{4,v}(\hat{q})\vert = \max_{q\in (1,+\infty)} \vert f_{v}(q)-\bar{p}_{4,v}(q)\vert=0
\end{displaymath}
and that
\begin{displaymath}
\bar{p}_{4,v}(q)\equiv q^{-\sqrt{v}}.
\qedhere
\end{displaymath}
\end{proof}

\section{Convergence to CLE}
\label{secConv}

In this section we prove the convergence results.

Let $Q_{l}:=(-l,l)\times(0,l)$. Let $\mathcal{L}_{\alpha}^{\mathtt{H}_{n}\cap Q_{l},T}$  be the loops in
$\mathcal{L}_{\alpha}^{\mathtt{H}_{n}}$ that are contained in $Q_{l}$ and do at least $T$ jumps. Let
$\mathcal{L}_{\alpha}^{Q_{l}}$ be the Brownian loops in $\mathcal{L}_{\alpha}^{\mathbb{H}}$ that are contained
in $Q_{l}$. From \cite{BrugCamiaLis2014RWLoopsCLE} follows that for $\alpha\in (0,1/2]$, $l>0$ and $\theta\in(16/9,2)$, $\mathcal{F}_{\rm ext}(\mathcal{L}_{\alpha}^{\mathtt{H}_{n}\cap Q_{l},n^{\theta}})$ converges in law to $\mathcal{F}_{\rm ext}(\mathcal{L}_{\alpha}^{Q_{l}})$.

\begin{lemma}
\label{LemConvCutOffHP}
Let $\alpha\in (0,1/2]$ and $\theta\in(16/9,2)$.
$\mathcal{F}_{\rm ext}(\mathcal{L}_{\alpha}^{\mathtt{H}_{n},n^{\theta}})$ converges in law to 
$\mathcal{F}_{\rm ext}(\mathcal{L}_{\alpha}^{\mathbb{H}})$.
\end{lemma}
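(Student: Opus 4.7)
The plan is to combine the bounded-domain convergence from \cite{BrugCamiaLis2014RWLoopsCLE} with a localization estimate showing that loops exiting a large box $Q_l$ do not affect the contours around a fixed finite tuple of points, uniformly in $n$.

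Fix $z_1,\dots,z_j\in\mathbb{H}$ and $\varepsilon>0$. On the continuum side, since $\mathcal{F}_{\rm ext}(\mathcal{L}_{\alpha}^{\mathbb{H}})$ is a $\hbox{CLE}_{\kappa(\alpha)}$, each of the relevant contours is a.s.\ bounded, and they all sit inside some random compact $K\subset\mathbb{H}$; pick a deterministic compact $N\subset\mathbb{H}$ (say a ball around the $z_i$'s) so that $K\subset N$ with probability at least $1-\varepsilon$. If no loop of $\mathcal{L}_{\alpha}^{\mathbb{H}}$ meeting $N$ also meets $\partial Q_l$, then $\mathcal{L}_{\alpha}^{Q_l}$ already contains every loop that could influence the contours around the $z_i$, so the two ensembles agree on the labels $z_1,\dots,z_j$. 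The Poisson mean of loops meeting both $N$ and $\partial Q_l$ equals $\alpha\mu^{\mathbb{H}}(\{\gamma:\gamma\cap N\neq\emptyset,\gamma\cap\partial Q_l\neq\emptyset\})$, and this tends to $0$ as $l\to\infty$ by $\sigma$-finiteness of the Brownian loop measure. Hence for some $l_0$ large,
\begin{displaymath}
\mathbb{P}\bigl(\mathcal{F}_{\rm ext}(\mathcal{L}_{\alpha}^{Q_{l_0}})[z_1,\dots,z_j]\neq \mathcal{F}_{\rm ext}(\mathcal{L}_{\alpha}^{\mathbb{H}})[z_1,\dots,z_j]\bigr)\leq 2\varepsilon.
\end{displaymath}

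The discrete-side analogue is that, for some $l_1\geq l_0$,
\begin{displaymath}
\sup_n \mathbb{P}\bigl(\mathcal{F}_{\rm ext}(\mathcal{L}_{\alpha}^{\mathtt{H}_n,n^\theta})[z_1,\dots,z_j]\neq \mathcal{F}_{\rm ext}(\mathcal{L}_{\alpha}^{\mathtt{H}_n\cap Q_{l_1},n^\theta})[z_1,\dots,z_j]\bigr)\leq 2\varepsilon.
\end{displaymath}
Any discrepancy forces the existence of a loop in $\mathcal{L}_{\alpha}^{\mathtt{H}_n,n^\theta}$ meeting $N$ and exiting $Q_{l_1}$; its Poisson mean is $\alpha$ times the rescaled random walk loop measure of such configurations. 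For $n$ large, such a loop has Euclidean diameter at least $\mathrm{dist}(N,\partial Q_{l_1})$ and does at least $n^\theta$ jumps, so it lies in the range where the Lawler--Trujillo-Ferreras coupling \cite{LawlerFerreras2007RWLoopSoup} compares the rescaled random walk loop measure with the Brownian loop measure of the same type; the latter was just shown to be $O(\varepsilon)$. For the remaining finite range of small $n$, a direct and crude estimate on the discrete loop measure of loops in $\mathtt{H}_n$ crossing an annulus of macroscopic width yields the same bound.

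Combining these two localization estimates with the convergence $\mathcal{F}_{\rm ext}(\mathcal{L}_{\alpha}^{\mathtt{H}_n\cap Q_{l_1},n^\theta})\to \mathcal{F}_{\rm ext}(\mathcal{L}_{\alpha}^{Q_{l_1}})$ from \cite{BrugCamiaLis2014RWLoopsCLE} (which also holds after restricting to the box $Q_{l_1}$ containing $Q_{l_0}$) via the triangle inequality applied to any bounded Lipschitz functional $F$ of the $d_H^\ast$-valued finite collection gives
\begin{displaymath}
\limsup_{n\to\infty}\bigl|\mathbb{E} F\bigl(\mathcal{F}_{\rm ext}(\mathcal{L}_\alpha^{\mathtt{H}_n,n^\theta})[z_1,\dots,z_j]\bigr) - \mathbb{E} F\bigl(\mathcal{F}_{\rm ext}(\mathcal{L}_\alpha^{\mathbb{H}})[z_1,\dots,z_j]\bigr)\bigr|\leq 8\|F\|_\infty\varepsilon,
\end{displaymath}
and letting $\varepsilon\to 0$ yields the desired convergence in law. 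The main obstacle is the uniform-in-$n$ control in the discrete localization: without the cutoff $n^\theta$ the random walk loop measure of loops joining $N$ to $\partial Q_{l_1}$ would be infinite due to microscopic loops and the localization would fail; retaining the cutoff keeps us precisely in the regime where the Lawler--Trujillo-Ferreras comparison applies, and one must also carefully match the $\sigma$-finite tails on the continuum and the discrete side to extract the same small-$\varepsilon$ bound uniformly in $n$.
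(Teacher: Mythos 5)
Your proposal is correct and follows essentially the same route as the paper: localize the relevant contours in a large box using the continuum $\hbox{CLE}$, bound the Brownian loop measure of loops crossing from the inner box to the boundary of a larger one, transfer that crossing estimate to the cutoff random walk loop-soup via the Lawler--Trujillo-Ferreras approximation, and conclude with the bounded-domain convergence of van de Brug--Camia--Lis. The only (immaterial) difference is that you aim for a $\sup_{n}$ bound with a separate crude estimate for small $n$, whereas a $\limsup_{n\to\infty}$ bound, which is all the coupling directly gives, already suffices.
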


\begin{proof}
Let $z_{1},\dots,z_{j}\in\mathbb{H}$. To deduce that 
$\mathcal{F}_{\rm ext}(\mathcal{L}_{\alpha}^{\mathtt{H}_{n},n^{\theta}})[z_{1},\dots,z_{j}]$ converges in law to
$\mathcal{F}_{\rm ext}(\mathcal{L}_{\alpha}^{\mathbb{H}})[z_{1},\dots,z_{j}]$ from the result of \cite{BrugCamiaLis2014RWLoopsCLE} we need only to show that
\begin{displaymath}
\lim_{l\rightarrow +\infty}\liminf_{n\rightarrow +\infty}\mathbb{P}(
\text{Contours of}~\mathcal{F}_{\rm ext}(\mathcal{L}_{\alpha}^{\mathtt{H}_{n},n^{\theta}})[z_{1},\dots,z_{j}]
~\text{contained in}~Q_{l})=1.
\end{displaymath}

Let $\varepsilon\in(0,1/2)$. There is $l_{0}>0$ such that
\begin{displaymath}
\mathbb{P}\left(
\text{Contours of}~\mathcal{F}_{\rm ext}(\mathcal{L}_{\alpha}^{\mathbb{H}})[z_{1},\dots,z_{j}]
~\text{contained in}~Q_{l_{0}}\right)\geq 1-\varepsilon.
\end{displaymath}
Denote
\begin{displaymath}
\partial_{\mathbb{H}}Q_{l}:=(\lbrace -l\rbrace\times(0,l])\cup(\lbrace l\rbrace\times(0,l])
\cup([-l,l]\times\lbrace l\rbrace).
\end{displaymath}
There is $l_{1}>l_{0}$ such that
\begin{displaymath}
\mathbb{P}(\exists\gamma\in\mathcal{L}_{\alpha}^{\mathbb{H}}, \gamma\cap Q_{l_{0}}\neq\emptyset,
\gamma\cap\partial_{\mathbb{H}}Q_{l_{1}}\neq\emptyset)\leq\varepsilon.
\end{displaymath}
Then
\begin{equation*}
\begin{split}
\lim_{n\rightarrow +\infty}\mathbb{P}(&\text{Contours of}~
\mathcal{F}_{\rm ext}(\mathcal{L}_{\alpha}^{\mathtt{H}_{n}\cap Q_{l_{1}},n^{\theta}})
[z_{1},\dots,z_{j}]~\text{contained in}~Q_{l_{0}})
\\&=\mathbb{P}(\text{Contours of}~\mathcal{F}_{\rm ext}(\mathcal{L}_{\alpha}^{Q_{l_{1}}})
[z_{1},\dots,z_{j}]~\text{contained in}~Q_{l_{0}})
\\&\geq\mathbb{P}\left(
\text{Contours of}~\mathcal{F}_{\rm ext}(\mathcal{L}_{\alpha}^{\mathbb{H}})[z_{1},\dots,z_{j}]
~\text{contained in}~Q_{l_{0}}\right)\geq 1-\varepsilon.
\end{split}
\end{equation*}
According to the approximation of \cite{LawlerFerreras2007RWLoopSoup},
\begin{multline*}
\lim_{n\rightarrow +\infty}\mathbb{P}(\exists\gamma\in\mathcal{L}_{\alpha}^{\mathtt{H}_{n},n^{\theta}}, 
\gamma\cap Q_{l_{0}}\neq\emptyset,\gamma\cap\partial_{\mathbb{H}}Q_{l_{1}}\neq\emptyset)
\\=\mathbb{P}(\exists\gamma\in\mathcal{L}_{\alpha}^{\mathbb{H}}, \gamma\cap Q_{l_{0}}\neq\emptyset,
\gamma\cap\partial_{\mathbb{H}}Q_{l_{1}}\neq\emptyset)\leq\varepsilon.
\end{multline*}
But
\begin{equation*}
\begin{split}
\mathbb{P}(\text{Contours of}~&
\mathcal{F}_{\rm ext}(\mathcal{L}_{\alpha}^{\mathtt{H}_{n},n^{\theta}})[z_{1},\dots,z_{j}]
~\text{contained in}~Q_{l_{0}})\geq
\\&\mathbb{P}(\text{Contours of}~\mathcal{F}_{\rm ext}(\mathcal{L}_{\alpha}^{\mathtt{H}_{n}\cap Q_{l_{1}},n^{\theta}})[z_{1},\dots,z_{j}]~\text{contained in}~Q_{l_{0}})\\
&-\mathbb{P}(\exists\gamma\in\mathcal{L}_{\alpha}^{\mathtt{H}_{n},n^{\theta}}, 
\gamma\cap Q_{l_{0}}\neq\emptyset,\gamma\cap\partial_{\mathbb{H}}Q_{l_{1}}\neq\emptyset).
\end{split}
\end{equation*}
Thus,
\begin{displaymath}
\liminf_{n\rightarrow +\infty}\mathbb{P}(\text{Contours of}~\mathcal{F}_{\rm ext}(\mathcal{L}_{\alpha}^{\mathtt{H}_{n},n^{\theta}})[z_{1},\dots,z_{j}]~\text{contained in}
~Q_{l_{0}})\geq 1-2\varepsilon.
\qedhere
\end{displaymath}
\end{proof}

From now on $\theta\in(16/9,2)$ will be fixed. $\alpha$ will belong to $(0,1/2]$. For 
$z_{0}\in\mathbb{H}$, we define
\begin{displaymath}
\delta_{\alpha,n}(z_{0}):=\max\lbrace 
d(z,\mathcal{F}_{\rm ext}(\mathcal{L}_{\alpha}^{\mathtt{H}_{n},n^{\theta}})(z_{0}))\vert
z\in
\mathcal{F}_{\rm ext}(\mathcal{L}_{\alpha}^{\widetilde{\mathtt{H}}_{n}})(z_{0})\rbrace.
\end{displaymath}
By $z\in\mathcal{F}_{\rm ext}(\mathcal{L}_{\alpha}^{\widetilde{\mathtt{H}}_{n}})(z_{0})$ we mean that $z$ is a point on the contour $\mathcal{F}_{\rm ext}(\mathcal{L}_{\alpha}^{\widetilde{\mathtt{H}}_{n}})(z_{0})$. The random variable $\delta_{\alpha,n}(z_{0})$ is defined only when 
$\mathcal{F}_{\rm ext}(\mathcal{L}_{\alpha}^{\mathtt{H}_{n},n^{\theta}})(z_{0})$ is defined, which happens with probability converging to $1$.

\begin{lemma}
\label{LemPosDist}
Assume that $\mathcal{F}_{\rm ext}(\mathcal{L}_{\alpha}^{\widetilde{\mathtt{H}}_{n}})$ does not converge in law to
$\mathcal{F}_{\rm ext}(\mathcal{L}_{\alpha}^{\mathbb{H}})$. Then there is $z_{\alpha,0}\in\mathbb{H}$ such that
$\delta_{\alpha,n}(z_{\alpha,0})$ does not converge in law to $0$.
\end{lemma}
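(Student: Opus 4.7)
We argue by contrapositive. Suppose that for every $z_0 \in \mathbb{H}$ one has $\delta_{\alpha,n}(z_0)\to 0$ in probability (equivalently, in law to $0$); we show that for every choice of $z_1,\dots,z_j \in \mathbb{H}$, $\mathcal{F}_{\rm ext}(\mathcal{L}_\alpha^{\widetilde{\mathtt{H}}_n})[z_1,\dots,z_j]$ converges in law to $\mathcal{F}_{\rm ext}(\mathcal{L}_\alpha^{\mathbb{H}})[z_1,\dots,z_j]$ under $d^{\ast}_H$, which is the required conclusion. Abbreviate $A_n^i := \mathcal{F}_{\rm ext}(\mathcal{L}_\alpha^{\widetilde{\mathtt{H}}_n})(z_i)$, $B_n^i := \mathcal{F}_{\rm ext}(\mathcal{L}_\alpha^{\mathtt{H}_n,n^\theta})(z_i)$, and $L_i := \mathcal{F}_{\rm ext}(\mathcal{L}_\alpha^{\mathbb{H}})(z_i)$.

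By Lemma \ref{LemConvCutOffHP}, the finite collection $(B_n^1,\dots,B_n^j)$ converges in law to $(L_1,\dots,L_j)$ for $d^{\ast}_H$, with each $L_i$ almost surely a Jordan loop. The inclusion of clusters recalled in the introduction ($\mathcal{L}_\alpha^{\mathtt{H}_n,n^\theta}\subset\mathcal{L}_\alpha^{\mathtt{H}_n}$ and each $\mathcal{L}_\alpha^{\mathtt{H}_n}$-cluster lies in a $\mathcal{L}_\alpha^{\widetilde{\mathtt{H}}_n}$-cluster) implies that the contour $B_n^i$ is contained in the closed region enclosed by $A_n^i$. Combined with $A_n^i \subseteq N_{\delta_{\alpha,n}(z_i)}(B_n^i)$ and the tightness of the $B_n^i$'s, this yields tightness of $(A_n^i)_n$ in the Hausdorff topology of closed sets.

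Pick any subsequence. From the joint convergence of $(B_n^i)_i$ in law and $\delta_{\alpha,n}(z_i)\to 0$ in probability, extract by tightness and Skorokhod's representation theorem a further sub-subsequence along which, on a common probability space, $B_n^i \to L_i$ in Hausdorff distance, $\delta_{\alpha,n}(z_i)\to 0$, and $A_n^i \to \tilde L_i$ in Hausdorff distance, all almost surely, for every $i$. The one-sided bound forces $\tilde L_i \subseteq L_i$. Moreover, the region enclosed by $A_n^i$ contains that enclosed by $B_n^i$, and since $B_n^i\to L_i$ in Hausdorff, every point strictly inside the Jordan curve $L_i$ is eventually enclosed by $A_n^i$; by stability of the separation property under Hausdorff convergence, $\tilde L_i$ must still separate $z_i$ from infinity. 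Since no proper closed subset of a Jordan curve $L_i$ in $\mathbb{H}$ disconnects $z_i$ from $\infty$ (a neighbourhood of any point $p\in L_i\setminus\tilde L_i$ provides a path crossing from $D_{L_i}$ to the exterior while avoiding $\tilde L_i$), we conclude $\tilde L_i=L_i$. Hence $(A_n^1,\dots,A_n^j)\to (L_1,\dots,L_j)$ in law for $d^{\ast}_H$.

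The main obstacle is the topological step deducing full (two-sided) Hausdorff closeness of $A_n^i$ to $L_i$ from the merely one-sided control $A_n^i \subseteq N_{\delta_{\alpha,n}(z_i)}(B_n^i)$: one-sided proximity is in general strictly weaker than two-sided proximity. The argument succeeds here only because of the specific structure we exploit---the cluster inclusion forcing $A_n^i$ to enclose $B_n^i$, the almost-sure Jordan character of the limiting CLE contour $L_i$, and the preservation of the enclosure/separation relation under Hausdorff convergence. A minor bookkeeping remark is that when two marked points $z_i,z_j$ fall in the same external cluster, the corresponding contours coincide and must be matched accordingly in the $d^{\ast}_H$ distance; this is absorbed by applying the extraction to the (finite, random) family of distinct contours.
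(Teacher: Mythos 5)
Your proposal is correct and follows essentially the same route as the paper: argue by contraposition, combine the convergence of the cut-off contours $\mathcal{F}_{\rm ext}(\mathcal{L}_{\alpha}^{\mathtt{H}_{n},n^{\theta}})$ from Lemma \ref{LemConvCutOffHP} with the fact that each such contour is surrounded by the corresponding metric-graph contour, and conclude from the one-sided bound given by $\delta_{\alpha,n}$. The paper's own proof is a three-sentence version of this, and your topological step (a closed subset of the Jordan curve $L_{i}$ that still separates $z_{i}$ from $\infty$ must be all of $L_{i}$) is precisely the detail it leaves implicit.
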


\begin{proof}
If $\mathcal{F}_{\rm ext}(\mathcal{L}_{\alpha}^{\widetilde{\mathtt{H}}_{n}})$ does not converge in law to
$\mathcal{F}_{\rm ext}(\mathcal{L}_{\alpha}^{\mathbb{H}})$ then by definition there are 
$z_{1},\dots,z_{j}\in\mathbb{H}$ such that 
$\mathcal{F}_{\rm ext}(\mathcal{L}_{\alpha}^{\widetilde{\mathtt{H}}_{n}})[z_{1},\dots,z_{j}]$ does not converge in law to
\\$\mathcal{F}_{\rm ext}(\mathcal{L}_{\alpha}^{\mathbb{H}})[z_{1},\dots,z_{j}]$. To the contrary
$\mathcal{F}_{\rm ext}(\mathcal{L}_{\alpha}^{\mathtt{H}_{n},n^{\theta}})[z_{1},\dots,z_{j}]$ does converge in law to
$\mathcal{F}_{\rm ext}(\mathcal{L}_{\alpha}^{\mathbb{H}})[z_{1},\dots,z_{j}]$. Since each contour of 
$\mathcal{F}_{\rm ext}(\mathcal{L}_{\alpha}^{\mathtt{H}_{n},n^{\theta}})[z_{1},\dots,z_{j}]$ is surrounded by a contour of
$\mathcal{F}_{\rm ext}(\mathcal{L}_{\alpha}^{\widetilde{\mathtt{H}}_{n}})[z_{1},\dots,z_{j}]$, one of 
$\delta_{\alpha,n}(z_{i})$ must not converge in law to $0$.
\end{proof}

Let $z_{\alpha,0}$ be defined by the previous lemma under the non-convergence assumption. 
The set of points $z$ on the metric graph contained in or surrounded by
$\mathcal{F}_{\rm ext}(\mathcal{L}_{\alpha}^{\widetilde{\mathtt{H}}_{n}})(z_{\alpha,0})$
and not in the interior surrounded by
$\mathcal{F}_{\rm ext}(\mathcal{L}_{\alpha}^{\mathtt{H}_{n},n^{\theta}})(z_{\alpha,0})$, such that 
$d(z,\mathcal{F}_{\rm ext}(\mathcal{L}_{\alpha}^{\mathtt{H}_{n},n^{\theta}})(z_{\alpha,0}))=\delta_{\alpha,n}(z_{\alpha,0})\wedge 1$,
is non-empty (when $\delta_{\alpha,n}(z_{\alpha,0})$ is defined).
Indeed, $\mathcal{F}_{\rm ext}(\mathcal{L}_{\alpha}^{\widetilde{\mathtt{H}}_{n}})(z_{\alpha,0})$ plus the set of points it surrounds, minus the interior surrounded by
$\mathcal{F}_{\rm ext}(\mathcal{L}_{\alpha}^{\mathtt{H}_{n},n^{\theta}})(z_{\alpha,0})$,
 is connected and compact. Let $Z_{\alpha,n}$ be a random point taking values in the above set, for instance the maximum for the lexicographical order.

\begin{lemma}
\label{LemConvDistPoint}
Assume that $\mathcal{F}_{\rm ext}(\mathcal{L}_{\alpha}^{\widetilde{\mathtt{H}}_{n}})$ does not converge in law to
$\mathcal{F}_{\rm ext}(\mathcal{L}_{\alpha}^{\mathbb{H}})$. Then there is a sub-sequence of indices $n_{\alpha,0}$ such that
the joint law of
\begin{displaymath}
(\mathcal{F}_{\rm ext}(\mathcal{L}_{\alpha}^{\mathtt{H}_{n_{\alpha,0}},n_{\alpha,0}^{\theta}})(z_{\alpha,0}),
Z_{\alpha,n_{\alpha,0}})
\end{displaymath}
has a limit when $n_{\alpha,0}\rightarrow +\infty$. It is a law on
\begin{displaymath}
(\mathcal{F}_{\rm ext}(\mathcal{L}_{\alpha}^{\mathbb{H}})(z_{\alpha,0}),Z_{\alpha})
\end{displaymath}
satisfying the property that with positive probability the point $Z_{\alpha}$ is not contained or surrounded by
$\mathcal{F}_{\rm ext}(\mathcal{L}_{\alpha}^{\mathbb{H}})(z_{\alpha,0})$.
\end{lemma}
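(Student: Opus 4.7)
The plan is to combine the convergence of the cutoff contour from Lemma \ref{LemConvCutOffHP} with the failure of $\delta_{\alpha,n}(z_{\alpha,0})$ to converge to $0$ in law, extract a joint subsequential limit, and then upgrade ``$Z_{\alpha}$ is at positive distance from the limit contour'' to ``$Z_{\alpha}$ lies in the exterior of that contour''. By Lemma \ref{LemConvCutOffHP}, $\mathcal{F}_{\rm ext}(\mathcal{L}_{\alpha}^{\mathtt{H}_{n},n^{\theta}})(z_{\alpha,0})$ converges in law to $\mathcal{F}_{\rm ext}(\mathcal{L}_{\alpha}^{\mathbb{H}})(z_{\alpha,0})$ in the Hausdorff topology, so the contour is tight, and since $Z_{\alpha,n}$ lies within Euclidean distance $\leq 1$ of this contour, $(Z_{\alpha,n})$ is tight as well. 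Using the non-convergence hypothesis and Lemma \ref{LemPosDist}, I would fix $\varepsilon_{0},\delta_{0}>0$ and a subsequence along which $\mathbb{P}(\delta_{\alpha,n}(z_{\alpha,0})\wedge 1\geq \varepsilon_{0}\wedge 1)\geq \delta_{0}$, and apply Prokhorov to obtain a further subsequence $n_{\alpha,0}$ along which the joint law of $(\mathcal{F}_{\rm ext}(\mathcal{L}_{\alpha}^{\mathtt{H}_{n_{\alpha,0}},n_{\alpha,0}^{\theta}})(z_{\alpha,0}),Z_{\alpha,n_{\alpha,0}},\delta_{\alpha,n_{\alpha,0}}(z_{\alpha,0})\wedge 1)$ converges to some triple $(C_{\alpha},Z_{\alpha},D_{\alpha})$ with $C_{\alpha}\stackrel{(d)}{=}\mathcal{F}_{\rm ext}(\mathcal{L}_{\alpha}^{\mathbb{H}})(z_{\alpha,0})$. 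Joint continuity of $(K,z)\mapsto d(z,K)$ forces $d(Z_{\alpha},C_{\alpha})=D_{\alpha}$, and Portmanteau applied to the closed event $\{D\geq \varepsilon_{0}\wedge 1\}$ gives $\mathbb{P}(D_{\alpha}\geq \varepsilon_{0}\wedge 1)\geq \delta_{0}>0$.

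For the ``exterior'' statement I would exploit the inclusion of the enclosed regions. Let $R^{m}_{n}$ and $R^{c}_{n}$ denote the closed regions of $\mathbb{H}$ bounded respectively by $\mathcal{F}_{\rm ext}(\mathcal{L}_{\alpha}^{\widetilde{\mathtt{H}}_{n}})(z_{\alpha,0})$ and $\mathcal{F}_{\rm ext}(\mathcal{L}_{\alpha}^{\mathtt{H}_{n},n^{\theta}})(z_{\alpha,0})$; since the metric-graph cluster of $z_{\alpha,0}$ contains the corresponding cutoff discrete cluster, one has $R^{c}_{n}\subseteq R^{m}_{n}$. Whenever $\delta_{\alpha,n}>0$ the annular difference $R^{m}_{n}\setminus R^{c}_{n}$ is non-empty and connected, and on it $d(\cdot,\partial R^{c}_{n})$ attains every value in $[0,\delta_{\alpha,n}]$ by the intermediate value theorem; in particular $\delta_{\alpha,n}\wedge 1$ is attained at some point of $R^{m}_{n}\setminus R^{c}_{n}$. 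The selection prescribed before the lemma (``for instance the maximum for the lexicographical order'') is flexible enough to be refined into a measurable selection picking $Z_{\alpha,n}$ inside $R^{m}_{n}\setminus R^{c}_{n}$ whenever possible; such a choice is still admissible, and with it $Z_{\alpha,n}$ sits in the exterior of the Jordan curve $\partial R^{c}_{n}$ at distance exactly $\delta_{\alpha,n}\wedge 1$. On the event $\{D_{\alpha}\geq \varepsilon_{0}\wedge 1\}$, the Hausdorff convergence $\partial R^{c}_{n_{\alpha,0}}\to C_{\alpha}$ together with the uniform gap $\geq \varepsilon_{0}\wedge 1$ between $Z_{\alpha,n_{\alpha,0}}$ and $\partial R^{c}_{n_{\alpha,0}}$ then force $Z_{\alpha}$ into the exterior of $C_{\alpha}$, so $Z_{\alpha}$ is neither on nor surrounded by $C_{\alpha}$.

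The main obstacle I expect is this last step: transferring the ``exterior'' membership from the prelimit Jordan curves $\partial R^{c}_{n_{\alpha,0}}$ to the limiting simple $\hbox{CLE}_{\kappa(\alpha)}$ loop $C_{\alpha}$. The uniform positive gap reduces it to a soft topological argument (for instance, connect each $Z_{\alpha,n_{\alpha,0}}$ to infinity in $\mathbb{H}$ by a path staying at distance $>(\varepsilon_{0}\wedge 1)/2$ from $\partial R^{c}_{n_{\alpha,0}}$, and verify via Arzelà--Ascoli that such paths have a subsequential limit disjoint from $C_{\alpha}$), but it does use the simplicity of CLE loops established in \cite{SheffieldWerner2012CLE}.
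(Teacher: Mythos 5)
Your first paragraph reproduces the paper's argument essentially verbatim: tightness of the pair from the convergence of the cut-off contour plus the constraint $d(Z_{\alpha,n},\cdot)\leq 1$, extraction of a convergent subsequence by Prokhorov, and Portmanteau on the closed event $\lbrace d\geq\varepsilon\rbrace$ to retain a positive probability of positive distance in the limit; this part is fine. Your insistence on refining the selection rule so that $Z_{\alpha,n}$ lies in $R^{m}_{n}\setminus R^{c}_{n}$ is actually a welcome sharpening: the paper's prescription (lexicographic maximum among all points at distance $\delta_{\alpha,n}\wedge 1$) does not by itself guarantee that $Z_{\alpha,n}$ is unsurrounded by the cut-off contour, yet the paper's closing sentence relies on exactly that property.

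The gap is in your last step, the transfer of ``exterior'' to the limit. A point of the exterior of a Jordan curve at distance at least $\varepsilon_{0}\wedge 1$ from it need \emph{not} be joinable to infinity by a path staying at distance greater than $(\varepsilon_{0}\wedge 1)/2$ from the curve: a curve with a narrow fjord of width much smaller than $\varepsilon_{0}$ ending in a chamber of diameter about $2\varepsilon_{0}$ has an exterior point at its centre, far from the curve, whose every escape route traverses the fjord. So the family of paths you want to feed to Arzelà--Ascoli may be empty, and the argument collapses precisely at the point where work is required. The difficulty is genuine and not merely technical: ``not surrounded'' is an open condition and does not pass to Hausdorff limits (circles with gaps of size $1/n$ converge in Hausdorff distance to a full circle that surrounds their common centre), so neither Portmanteau nor soft topology can close this step. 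What is really needed is a stronger mode of convergence of the cut-off contours to the (a.s.\ simple) limit loop --- e.g.\ convergence as parametrized loops, or joint convergence of the filled hulls to the hull of the limit --- under which the winding number around a point at distance at least $\varepsilon_{0}\wedge 1$ is eventually constant, so that ``unsurrounded at uniformly positive distance'' survives the limit. The paper itself dispatches this in a single unsupported sentence, so you have not lost anything relative to it, but your proposed path construction does not supply the missing justification.
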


\begin{proof}
$\delta_{\alpha,n}(z_{\alpha,0})$ does not converge in law to $0$. This means that there is $\varepsilon>0$ and a sub-sequence of indices $n'$ such that
\begin{equation}
\label{DistPteps}
\forall n', \mathbb{P}(d(Z_{\alpha,n'},\mathcal{F}_{\rm ext}(\mathcal{L}_{\alpha}^
{\mathtt{H}_{n'},n'^{\theta}})(z_{\alpha,0}))\geq\varepsilon)\geq\varepsilon.
\end{equation}
The sub-sequence of random variables
\begin{displaymath}
(\mathcal{F}_{\rm ext}(\mathcal{L}_{\alpha}^{\mathtt{H}_{n',n'^{\theta}}})(z_{\alpha,0}),
Z_{\alpha,n'})
\end{displaymath}
is tight. Indeed the first component of the couple converges in law and the second is by definition at distance at most $1$ from the first. Thus there is a sub-sequence of indices $n_{\alpha,0}$ out of $n'$ such that there is a convergence in law. 
$\mathcal{F}_{\rm ext}(\mathcal{L}_{\alpha}^
{\mathtt{H}_{n_{\alpha,0}},n_{\alpha,0}^{\theta}})(z_{\alpha,0})$ converges in law 
$\mathcal{F}_{\rm ext}(\mathcal{L}_{\alpha}
^{\mathbb{H}})(z_{\alpha,0})$. 
Let $Z_{\alpha}$ be defined as the second component of the limit in law of
$(\mathcal{F}_{\rm ext}(\mathcal{L}_{\alpha}^
{\mathtt{H}_{n_{\alpha,0}},n_{\alpha,0}^{\theta}})(z_{\alpha,0}),
Z_{\alpha,n_{\alpha,0}})$.
\eqref{DistPteps} implies that
\begin{displaymath}
\mathbb{P}(d(Z_{\alpha},\mathcal{F}_{\rm ext}(\mathcal{L}_{\alpha}^{\mathbb{H}})
(z_{\alpha,0}))\geq\varepsilon)\geq\varepsilon.
\end{displaymath}
Moreover, a.s.\ $Z_{\alpha}$ cannot be in the interior surrounded by 
$\mathcal{F}_{\rm ext}(\mathcal{L}_{\alpha}^{\mathbb{H}})(z_{\alpha,0})$ because $Z_{\alpha,n}$ is not surrounded by
$\mathcal{F}_{\rm ext}(\mathcal{L}_{\alpha}^{\mathtt{H}_{n},n^{\theta}})(z_{\alpha,0})$.
\end{proof}

From now on $(z_{j})_{j\geq 1}$ will be a fixed everywhere dense sequence in $\mathbb{H}$.

\begin{lemma}
\label{LemSeqSubSeq}
Assume that $\mathcal{F}_{\rm ext}(\mathcal{L}_{\alpha}^{\widetilde{\mathtt{H}}_{n}})$ does not converge in law to
$\mathcal{F}_{\rm ext}(\mathcal{L}_{\alpha}^{\mathbb{H}})$. Then there is a family of sub-sequences of indices $n_{\alpha,j}$ such that
\begin{itemize}
\item $n_{\alpha,0}$ is given by Lemma \ref{LemConvDistPoint}.
\item $n_{\alpha,j+1}$ is a sub-sequence of $n_{\alpha,j}$.
\item The random variable
\begin{displaymath}
(\mathcal{F}_{\rm ext}(\mathcal{L}_{\alpha}^{\mathtt{H}_{n_{\alpha,j}},n_{\alpha,j}^{\theta}})
[z_{\alpha,0},z_{1},\dots,z_{j}],Z_{\alpha,n_{\alpha,j}})
\end{displaymath}
converges in law as $n_{\alpha,j}\rightarrow +\infty$ and the limit defines the joint law of
\begin{displaymath}
(\mathcal{F}_{\rm ext}(\mathcal{L}_{\alpha}^{\mathbb{H}})
[z_{\alpha,0},z_{1},\dots,z_{j}],Z_{\alpha}).
\end{displaymath}
\item The family of joint laws on $(\mathcal{F}_{\rm ext}(\mathcal{L}_{\alpha}^{\mathbb{H}})
[z_{\alpha,0},z_{1},\dots,z_{j}],Z_{\alpha})_{j\geq 1}$ is consistent in the sense that the law on 
$(\mathcal{F}_{\rm ext}(\mathcal{L}_{\alpha}^{\mathbb{H}})
[z_{\alpha,0},z_{1},\dots,z_{j}],Z_{\alpha})$ induced by the law of
\\$(\mathcal{F}_{\rm ext}(\mathcal{L}_{\alpha}^{\mathbb{H}})
[z_{\alpha,0},z_{1},\dots,z_{j+1}],Z_{\alpha})$ is the same as the one given by the convergence. In particular the law on
$(\mathcal{F}_{\rm ext}(\mathcal{L}_{\alpha}^{\mathbb{H}})(z_{\alpha,0}),Z_{\alpha})$ is the one given by Lemma
\ref{LemConvDistPoint}.
\item The family of laws of $(\mathcal{F}_{\rm ext}(\mathcal{L}_{\alpha}^{\mathbb{H}})
[z_{\alpha,0},z_{1},\dots,z_{j}],Z_{\alpha})_{j\geq 1}$ uniquely defines a law on
$(\mathcal{F}_{\rm ext}(\mathcal{L}_{\alpha}^{\mathbb{H}}),Z_{\alpha})$.
\end{itemize}
\end{lemma}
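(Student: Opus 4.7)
The proof proceeds by a standard diagonal extraction combined with a Kolmogorov-type extension argument. The plan is to construct $n_{\alpha,j+1}$ recursively as a subsequence of $n_{\alpha,j}$ along which the augmented joint variable (with the added point $z_{j+1}$) converges in law, and then assemble these finite-dimensional limits into a single law.

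For the induction step, assume $n_{\alpha,j}$ has been constructed. I would argue tightness of
\begin{displaymath}
\bigl(\mathcal{F}_{\rm ext}(\mathcal{L}_{\alpha}^{\mathtt{H}_{n},n^{\theta}})
[z_{\alpha,0},z_{1},\dots,z_{j+1}],\,Z_{\alpha,n}\bigr)
\end{displaymath}
along $n = n_{\alpha,j}$ as follows. The first component converges in law by Lemma \ref{LemConvCutOffHP} applied to each marked point, so it is tight. For the second, $Z_{\alpha,n}$ lies within distance $1$ of the contour $\mathcal{F}_{\rm ext}(\mathcal{L}_{\alpha}^{\mathtt{H}_{n},n^{\theta}})(z_{\alpha,0})$ by construction, and that contour belongs to a tight family; hence $Z_{\alpha,n}$ is tight as a random point in $\mathbb{H}$. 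Prokhorov's theorem then yields a sub-subsequence $n_{\alpha,j+1}$ along which the joint variable converges in law. The limit defines the desired joint law on $(\mathcal{F}_{\rm ext}(\mathcal{L}_{\alpha}^{\mathbb{H}})[z_{\alpha,0},z_{1},\dots,z_{j+1}],Z_{\alpha})$. Consistency between levels is automatic: because $n_{\alpha,j+1}$ is extracted from $n_{\alpha,j}$, projecting the limit at level $j+1$ onto the first $j+1$ contour coordinates and $Z_{\alpha}$ reproduces the limit at level $j$, by the continuous mapping theorem applied to the projection. At level $j=0$ we recover the law from Lemma \ref{LemConvDistPoint}.

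The final item, uniqueness of a joint law on $(\mathcal{F}_{\rm ext}(\mathcal{L}_{\alpha}^{\mathbb{H}}),Z_{\alpha})$, follows from the Kolmogorov extension theorem applied to the consistent family indexed by $j\geq 1$: it produces a joint law of $Z_{\alpha}$ together with the infinite family $\bigl(\mathcal{F}_{\rm ext}(\mathcal{L}_{\alpha}^{\mathbb{H}})(z_{i})\bigr)_{i\geq 1}$. Because $(z_{j})_{j\geq 1}$ is dense in $\mathbb{H}$ and almost surely every $z\in\mathbb{H}$ is contained in or surrounded by some contour of $\mathcal{F}_{\rm ext}(\mathcal{L}_{\alpha}^{\mathbb{H}})$, this countable family deterministically reconstructs the whole collection $\mathcal{F}_{\rm ext}(\mathcal{L}_{\alpha}^{\mathbb{H}})$ (two distinct non-nested contours are distinguished by some $z_{j}$), and the marginal law of $\mathcal{F}_{\rm ext}(\mathcal{L}_{\alpha}^{\mathbb{H}})$ agrees with the usual one.

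The main obstacle is this last measurability/reconstruction point: one must check that the map from the countable family $(\mathcal{F}_{\rm ext}(\mathcal{L}_{\alpha}^{\mathbb{H}})(z_i))_{i\geq 1}$ back to the full unordered collection $\mathcal{F}_{\rm ext}(\mathcal{L}_{\alpha}^{\mathbb{H}})$ is well defined and measurable in the topology used (finite subcollections under $d^{\ast}_{H}$). The other steps, tightness and the diagonal extraction, are routine given the cut-off convergence of Lemma \ref{LemConvCutOffHP} and the uniform distance bound on $Z_{\alpha,n}$.
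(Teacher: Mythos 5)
Your proposal is correct and follows essentially the same route as the paper's proof: recursive extraction of nested subsequences via tightness (the first component converging in law, the second being within distance $1$ of a tight contour), consistency from the nesting, and the Kolmogorov extension theorem combined with the fact that every contour of $\mathcal{F}_{\rm ext}(\mathcal{L}_{\alpha}^{\mathbb{H}})$ almost surely surrounds one of the dense points $z_{j}$. The reconstruction point you flag at the end is exactly the observation the paper invokes, so there is no gap.
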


\begin{proof}
The consistency of law follows from the fact that $n_{\alpha,j+1}$ is a sub-sequence of $n_{\alpha,j}$. A contour loop in $\mathcal{F}_{\rm ext}(\mathcal{L}_{\alpha}^{\mathbb{H}})$ almost surely surrounds one of the $z_{j}$ points. Thus the fact that a consistent family of laws on $(\mathcal{F}_{\rm ext}(\mathcal{L}_{\alpha}^{\mathbb{H}})
[z_{\alpha,0},z_{1},\dots,z_{j}],Z_{\alpha})_{j\geq 1}$ uniquely defines a law on
$(\mathcal{F}_{\rm ext}(\mathcal{L}_{\alpha}^{\mathbb{H}}),Z_{\alpha})$ follows from the Kolmogorov extension theorem.

Next we explain how we extract $n_{\alpha,j+1}$ out of $n_{\alpha,j}$. By construction,
the sub-sequence \\$(\mathcal{F}_{\rm ext}(\mathcal{L}_{\alpha}^{\mathtt{H}_{n_{\alpha,j}},n_{\alpha,j}^{\theta}})
[z_{\alpha,0},z_{1},\dots,z_{j}],Z_{\alpha,n_{\alpha,j}})$ converges in law as $n_{\alpha,j}\rightarrow +\infty$ and defines a joint law on $(\mathcal{F}_{\rm ext}(\mathcal{L}_{\alpha}^{\mathbb{H}})
[z_{\alpha,0},z_{1},\dots,z_{j}],Z_{\alpha})$. Moreover we have the convergence in law of 
$\mathcal{F}_{\rm ext}(\mathcal{L}_{\alpha}^{\mathtt{H}_{n_{\alpha,j}},n_{\alpha,j}^{\theta}})(z_{j+1})$ to
$\mathcal{F}_{\rm ext}(\mathcal{L}_{\alpha}^{\mathbb{H}})(z_{j+1})$. Thus the sub-sequence
$(\mathcal{F}_{\rm ext}(\mathcal{L}_{\alpha}^{\mathtt{H}_{n_{\alpha,j}},n_{\alpha,j}^{\theta}})
[z_{\alpha,0},z_{1},\dots,z_{j+1}],Z_{\alpha,n_{\alpha,j}})$ is tight and one can extract a subset of indices $n_{\alpha,j+1}$ such that it converges in law. The limit law is a law on
\\$(\mathcal{F}_{\rm ext}(\mathcal{L}_{\alpha}^{\mathbb{H}})
[z_{\alpha,0},z_{1},\dots,z_{j+1}],Z_{\alpha})$.
\end{proof}

\begin{thm}
\label{ThmConvOneHalf}
$\mathcal{F}_{\rm ext}(\mathcal{L}_{1/2}^{\mathtt{H}_{n}})$ and 
$\mathcal{F}_{\rm ext}(\mathcal{L}_{1/2}^{\widetilde{\mathtt{H}}_{n}})$ converge in law as
$n\rightarrow +\infty$ to $\mathcal{F}_{\rm ext}(\mathcal{L}_{1/2}^{\mathbb{H}})$, that is to say to a $\hbox{CLE}_{4}$ on
$\mathbb{H}$.
\end{thm}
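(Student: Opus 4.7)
The plan is to establish convergence of $\mathcal{F}_{\rm ext}(\mathcal{L}_{1/2}^{\widetilde{\mathtt{H}}_n})$ first, and then deduce the convergence of $\mathcal{F}_{\rm ext}(\mathcal{L}_{1/2}^{\mathtt{H}_n})$ by a sandwich argument. The metric graph case will be handled by contradiction: the existence of an ``excess'' piece in the contour limit beyond $\hbox{CLE}_{4}$ would produce an excess in the excursion--cluster connection probability, contradicting the exact equality $\lim p^{\widetilde{\mathtt{H}}_n}_{1/2,u_0(1/2),v}(q)=p^{\mathbb{H}}_{1/2,u_0(1/2),v}(q)=1-q^{-\sqrt{v}}$ of Proposition \ref{PropConvP}.

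Assume for contradiction that $\mathcal{F}_{\rm ext}(\mathcal{L}_{1/2}^{\widetilde{\mathtt{H}}_n})$ does not converge in law to $\mathcal{F}_{\rm ext}(\mathcal{L}_{1/2}^{\mathbb{H}})$. Applying Lemmas \ref{LemPosDist}, \ref{LemConvDistPoint} and \ref{LemSeqSubSeq} with $\alpha=1/2$, I extract a subsequence, a reference point $z_{1/2,0}\in\mathbb{H}$, and a limiting random pair $(\mathcal{F}_{\rm ext}(\mathcal{L}_{1/2}^{\mathbb{H}}),Z_{1/2})$ with the property that, with some positive probability $\eta>0$, the point $Z_{1/2}$ lies strictly outside the $\hbox{CLE}_{4}$ loop around $z_{1/2,0}$, while being attached to it through the metric graph cluster absorbing $z_{1/2,0}$ (since $Z_{1/2,n}$ is, by construction, contained in or surrounded by $\mathcal{F}_{\rm ext}(\mathcal{L}_{1/2}^{\widetilde{\mathtt{H}}_n})(z_{1/2,0})$ for every $n$). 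Thus $Z_{1/2}$ witnesses an excess region of the metric graph contour limit not captured by $\hbox{CLE}_{4}$.

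Now superimpose, independently of everything else, two Poisson ensembles of excursions $\mathcal{E}^{\widetilde{\mathtt{H}}_n}_{u_0(1/2)}((-\infty,0])$ and $\mathcal{E}^{\widetilde{\mathtt{H}}_n}_v([1,q])$, with parameters $v>0$ and $q>1$ to be chosen. By Proposition \ref{PropConvP},
\begin{displaymath}
\lim_{n\to\infty} p^{\widetilde{\mathtt{H}}_n}_{1/2,u_0(1/2),v}(q) = p^{\mathbb{H}}_{1/2,u_0(1/2),v}(q) = 1-q^{-\sqrt{v}},
\end{displaymath}
and by Lemma \ref{LemSLE} the right-hand side is the probability that an independent $\hbox{SLE}_{4}$ curve, built from $\mathcal{L}_{1/2}^{\mathbb{H}}$ and $\mathcal{E}^{\mathbb{H}}_{u_0(1/2)}((-\infty,0])$ and living on the $\hbox{CLE}_{4}$ contours, is intersected by an excursion from $\mathcal{E}^{\mathbb{H}}_v([1,q])$. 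The excess region around $Z_{1/2}$, however, provides an additional non-$\hbox{CLE}_{4}$ mechanism for linking these two ensembles: by positioning $z_{1/2,0}$ conveniently and tuning $v$ and $q$ so that $\mathcal{E}^{\widetilde{\mathtt{H}}_n}_{v}([1,q])$ reaches a neighbourhood of $z_{1/2,0}$ while $\mathcal{E}^{\widetilde{\mathtt{H}}_n}_{u_0(1/2)}((-\infty,0])$ reaches a neighbourhood of $Z_{1/2}$, one gets with positive probability a connection event beyond those counted by the $\hbox{SLE}_{4}$ curve. Transferred to the limit, this yields $\lim p^{\widetilde{\mathtt{H}}_n}_{1/2,u_0(1/2),v}(q) > 1-q^{-\sqrt{v}}$, contradicting the equality above. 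Hence the metric graph contours converge in law to $\hbox{CLE}_{4}$.

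For the discrete case, the sandwich is immediate: each cluster of $\mathcal{L}_{1/2}^{\mathtt{H}_n,n^{\theta}}$ is contained in a cluster of $\mathcal{L}_{1/2}^{\mathtt{H}_n}$, which is in turn contained in a cluster of $\mathcal{L}_{1/2}^{\widetilde{\mathtt{H}}_n}$. Consequently, for each $z_0\in\mathbb{H}$, the contour $\mathcal{F}_{\rm ext}(\mathcal{L}_{1/2}^{\mathtt{H}_n})(z_0)$ is nested between the innermost cutoff contour $\mathcal{F}_{\rm ext}(\mathcal{L}_{1/2}^{\mathtt{H}_n,n^{\theta}})(z_0)$ and the outermost metric graph contour $\mathcal{F}_{\rm ext}(\mathcal{L}_{1/2}^{\widetilde{\mathtt{H}}_n})(z_0)$; both extremes converge in law to the $\hbox{CLE}_{4}$ loop around $z_0$ (Lemma \ref{LemConvCutOffHP} and the step just established), so the discrete contour does too. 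The main obstacle is the strict-excess step: turning the mere existence of a single excess point $Z_{1/2}$ into a \emph{positive-probability} excess in the excursion--cluster connection event. This requires showing that the excess region around $Z_{1/2}$ is a.s.\ thick enough, containing an open set touching $\mathbb{R}$ on the appropriate sides, so that independent excursion ensembles tuned to the right intervals can actually detect it; the flexibility of having $z_{1/2,0}$ arbitrary in $\mathbb{H}$ and countably many auxiliary test points $(z_j)_{j\geq 1}$, provided by Lemmas \ref{LemPosDist}--\ref{LemSeqSubSeq}, is what makes such a detection argument possible.
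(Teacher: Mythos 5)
Your architecture matches the paper's: contradiction via Lemmas \ref{LemPosDist}--\ref{LemSeqSubSeq}, comparison with the exact identity of Proposition \ref{PropConvP}, then the sandwich $\mathcal{F}_{\rm ext}(\mathcal{L}_{1/2}^{\mathtt{H}_{n},n^{\theta}})(z)$ -- $\mathcal{F}_{\rm ext}(\mathcal{L}_{1/2}^{\mathtt{H}_{n}})(z)$ -- $\mathcal{F}_{\rm ext}(\mathcal{L}_{1/2}^{\widetilde{\mathtt{H}}_{n}})(z)$ for the discrete case. But the step you yourself flag as ``the main obstacle'' is precisely the heart of the proof, and the way you propose to fill it --- showing that the excess region around $Z_{1/2}$ is ``thick enough, containing an open set touching $\mathbb{R}$ on the appropriate sides'' --- is not what is needed and is not something you could establish: a priori the discrepancy between the metric-graph contour and the $\hbox{CLE}_{4}$ loop could be carried by a single limit point $Z_{1/2}$ with no interior and no connection to $\mathbb{R}$. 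Likewise, ``positioning $z_{1/2,0}$ conveniently'' is not available to you ($z_{1/2,0}$ is dictated by the failure of convergence) and no tuning of $v,q$ is required since the identity of Proposition \ref{PropConvP} holds for all $v>0$, $q>1$.

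The idea that closes the gap is different: one enlarges the connection event on the continuum side rather than trying to make the excess region detectable as a set. Define $A^{+}_{1/2,u,v}(q)$ by adding to $A_{1/2,u,v}(q)$ the events that one excursion family intersects the contour $\mathcal{F}_{\rm ext}(\mathcal{L}_{1/2}^{\mathbb{H}})(z_{1/2,0})$ while an excursion of the other family \emph{hits or surrounds the single point} $Z_{1/2}$. A Brownian excursion surrounds a fixed point of $\mathbb{H}$ with positive probability, so no thickness is needed; combining this with the positive probability that $Z_{1/2}$ lies outside the contour, and a conformal-restriction argument isolating one excursion on each side, one gets $\mathbb{P}(A^{+}_{1/2,u,v}(q))>\mathbb{P}(A_{1/2,u,v}(q))=p^{\mathbb{H}}_{1/2,u,v}(q)$. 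The prelimit counterpart $A^{n,+}$ is then \emph{contained} in the metric-graph connection event, because $Z_{1/2,n}$ lies on (or is surrounded by) $\mathcal{F}_{\rm ext}(\mathcal{L}_{1/2}^{\widetilde{\mathtt{H}}_{n}})(z_{1/2,0})$, so an excursion surrounding $Z_{1/2,n}$ automatically meets the same metric-graph cluster as one hitting the contour of $z_{1/2,0}$. Hence $p^{\widetilde{\mathtt{H}}_{n}}_{1/2,u,v}(q)\geq\mathbb{P}(A^{n,+}_{1/2,u,v}(q,j))$, and passing to the limit along the subsequences of Lemma \ref{LemSeqSubSeq} (with the finite-$j$ truncations needed to make the probabilities converge) yields $p^{\mathbb{H}}_{1/2,u_{0}(1/2),v}(q)\geq\mathbb{P}(A^{+}_{1/2,u_{0}(1/2),v}(q))>p^{\mathbb{H}}_{1/2,u_{0}(1/2),v}(q)$, the desired contradiction. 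Without this mechanism your argument does not go through.
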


\begin{proof}
It is enough to prove the convergence of 
$\mathcal{F}_{\rm ext}(\mathcal{L}_{1/2}^{\widetilde{\mathtt{H}}_{n}})$. Indeed we already have the convergence for 
$\mathcal{F}_{\rm ext}(\mathcal{L}_{1/2}^{\mathtt{H}_{n},n^{\theta}})$ and each contour 
$\mathcal{F}_{\rm ext}(\mathcal{L}_{1/2}^{\mathtt{H}_{n}})(z)$ lies between the contour
$\mathcal{F}_{\rm ext}(\mathcal{L}_{1/2}^{\mathtt{H}_{n},n^{\theta}})(z)$ and the contour
$\mathcal{F}_{\rm ext}(\mathcal{L}_{1/2}^{\widetilde{\mathtt{H}}_{n}})(z)$.  

Assume that $\mathcal{F}_{\rm ext}(\mathcal{L}_{1/2}^{\widetilde{\mathtt{H}}_{n}})$ does not converge in law to
$\mathcal{F}_{\rm ext}(\mathcal{L}_{1/2}^{\mathbb{H}})$. Let $z_{1/2,0}$ be the point defined by Lemma
\ref{LemPosDist} and $n_{1/2,j}$ the sub-sequences defined by Lemma \ref{LemSeqSubSeq}. We also consider the joint law of $(\mathcal{F}_{\rm ext}(\mathcal{L}_{1/2}^{\mathbb{H}}),Z_{1/2})$ defined by Lemma \ref{LemSeqSubSeq}.

For $u,v>0$ and $q>1$ we consider additional independent Poisson point processes of excursions
$\mathcal{E}^{\mathbb{H}}_{u}((-\infty,0])$ and $\mathcal{E}^{\mathbb{H}}_{v}([1,q])$.
Let $A_{1/2,u,v}(q)$ be the event that is satisfied if either 
an excursion from $\mathcal{E}^{\mathbb{H}}_{u}((-\infty,0])$ and one from 
$\mathcal{E}^{\mathbb{H}}_{v}([1,q])$ intersect each other or both intersect a common contour from
$\mathcal{F}_{\rm ext}(\mathcal{L}_{1/2}^{\mathbb{H}})$. By definition
\begin{displaymath}
\mathbb{P}(A_{1/2,u,v}(q))=p^{\mathbb{H}}_{1/2,u,v}(q).
\end{displaymath}

Let $A^{+}_{1/2,u,v}(q)$ be the event that is satisfied if one of the following conditions holds:
\begin{itemize}
\item An excursion from $\mathcal{E}^{\mathbb{H}}_{u}((-\infty,0])$ and one from 
$\mathcal{E}^{\mathbb{H}}_{v}([1,q])$ intersect each other.
\item An excursion from $\mathcal{E}^{\mathbb{H}}_{u}((-\infty,0])$ and one from 
$\mathcal{E}^{\mathbb{H}}_{v}([1,q])$ intersect a common contour from
$\mathcal{F}_{\rm ext}(\mathcal{L}_{1/2}^{\mathbb{H}})$.
\item An excursion from $\mathcal{E}^{\mathbb{H}}_{u}((-\infty,0])$ intersects
$\mathcal{F}_{\rm ext}(\mathcal{L}_{1/2}^{\mathbb{H}})(z_{1/2,0})$
and an excursion from $\mathcal{E}^{\mathbb{H}}_{v}([1,q])$ hits or surrounds
$Z_{1/2}$.
\item An excursion from $\mathcal{E}^{\mathbb{H}}_{v}([1,q])$ intersects
$\mathcal{F}_{\rm ext}(\mathcal{L}_{1/2}^{\mathbb{H}})(z_{1/2,0})$
and an excursion from $\mathcal{E}^{\mathbb{H}}_{u}((-\infty,0])$ hits or surrounds
$Z_{1/2}$.
\end{itemize}
We claim that
\begin{displaymath}
\mathbb{P}(A^{+}_{1/2,u,v}(q))>\mathbb{P}(A_{1/2,u,v}(q))
= p^{\mathbb{H}}_{1/2,u,v}(q).
\end{displaymath}
To see that the strict inequity holds, consider the following:
\begin{itemize}
\item Restrict to the event when $Z_{1/2}$ is not contained or surrounded by
the contour $\mathcal{F}_{\rm ext}(\mathcal{L}_{1/2}^{\mathbb{H}})(z_{1/2,0})$, which
has a positive probability.
\item Let $K$ by a compact subset of $\lbrace\Im(z)\geq 0\rbrace$ that contains
$\mathcal{F}_{\rm ext}(\mathcal{L}_{1/2}^{\mathbb{H}})(z_{1/2,0})$ and 
$Z_{1/2}$, such that $\mathbb{H}\setminus K$ is simply connected and such that
$K$ intersects the real line on $(0,+\infty)$ only.
\item Since
$\mathcal{E}^{\mathbb{H}}_{u}((-\infty,0])$ is independent from 
$(\mathcal{F}_{\rm ext}(\mathcal{L}_{1/2}^{\mathbb{H}}),Z_{1/2},K)$, there is a positive 
probability that no excursions in 
$\mathcal{E}^{\mathbb{H}}_{u}((-\infty,0])$, except one, hits K, 
and one excursion hits the contour
$\mathcal{F}_{\rm ext}(\mathcal{L}_{1/2}^{\mathbb{H}})(z_{1/2,0})$
without surrounding $Z_{1/2}$.
Then the point $Z_{1/2}$ is to the right from the region
defined by $\mathcal{E}^{\mathbb{H}}_{u}((-\infty,0])$
and the contours in 
$\mathcal{F}_{\rm ext}(\mathcal{L}_{1/2}^{\mathbb{H}})$ it intersects.
See Figure \ref{CEInterface} again for a representation of this region.
\item Since $\mathcal{E}^{\mathbb{H}}_{v}([1,q])$ is independent from
$(\mathcal{F}_{\rm ext}(\mathcal{L}_{1/2}^{\mathbb{H}}),Z_{1/2},
\mathcal{E}^{\mathbb{H}}_{u}((-\infty,0]))$, there is a positive probability that no excursion from $\mathcal{E}^{\mathbb{H}}_{v}([1,q])$
hits the region defined by $\mathcal{E}^{\mathbb{H}}_{u}((-\infty,0]))$ and the contours in 
$\mathcal{F}_{\rm ext}(\mathcal{L}_{1/2}^{\mathbb{H}})$ intersected by
$\mathcal{E}^{\mathbb{H}}_{u}((-\infty,0]))$, but one excursion
from $\mathcal{E}^{\mathbb{H}}_{v}([1,q])$ surrounds the point
$Z_{1/2}$, which is to the right from this region.
\end{itemize}

See Figure \ref{CEAplus} for the illustration of $A^{+}_{1/2,u,v}(q)\setminus A_{1/2,u,v}(q)$.

\begin{figure}[ht]
\begin{center}
  \includegraphics[scale=0.5]{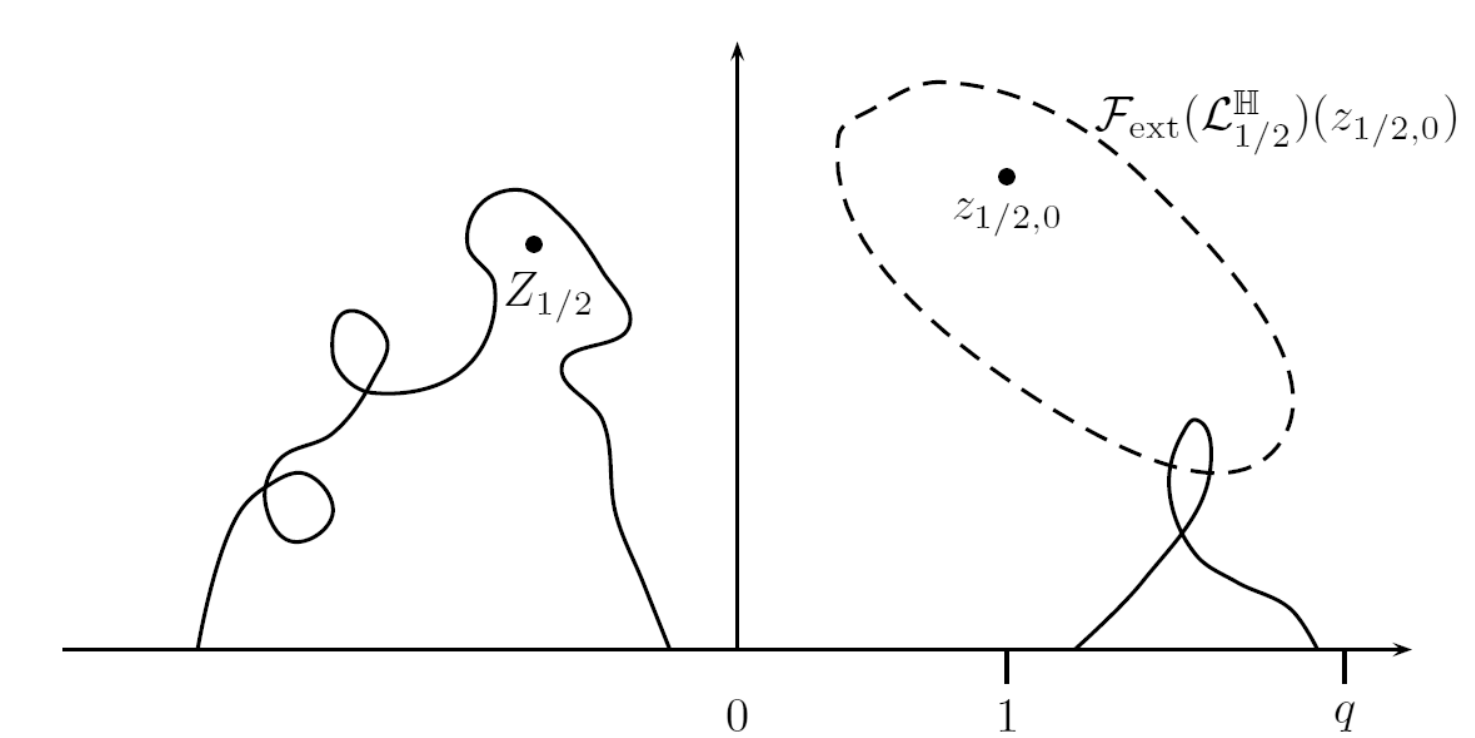}
  \end{center}
  \caption{Illustration of $A^{+}_{1/2,u,v}(q)$ where an excursion from 
  $\mathcal{E}^{\mathbb{H}}_{u}((-\infty,0])$ surrounds $Z_{1/2}$
  and an excursion from $\mathcal{E}^{\mathbb{H}}_{v}([1,q])$
intersects $\mathcal{F}_{\rm ext}(\mathcal{L}_{1/2}^{\mathbb{H}})(z_{1/2,0})$.
  }
  \label{CEAplus}
\end{figure}

Let $j\geq 1$. The events $A_{1/2,u,v}(q,j)$ respectively $A^{+}_{1/2,u,v}(q,j)$ are defined similarly to
$A_{1/2,u,v}(q)$ respectively $A^{+}_{1/2,u,v}(q)$, where the condition of 
$\mathcal{E}^{\mathbb{H}}_{u}((-\infty,0])$ and $\mathcal{E}^{\mathbb{H}}_{v}([1,q])$ intersecting a common contour of
$\mathcal{F}_{\rm ext}(\mathcal{L}_{1/2}^{\mathbb{H}})$ is replaced by the condition of intersecting a common contour of $\mathcal{F}_{\rm ext}(\mathcal{L}_{1/2}^{\mathbb{H}})[z_{1/2,0},z_{1},\dots,z_{j}]$. Then
\begin{displaymath}
\lim_{j\rightarrow +\infty}\mathbb{P}(A_{1/2,u,v}(q,j))=\mathbb{P}(A_{1/2,u,v}(q)),
\quad
\lim_{j\rightarrow +\infty}\mathbb{P}(A^{+}_{1/2,u,v}(q,j))=\mathbb{P}(A^{+}_{1/2,u,v}(q)).
\end{displaymath}

We will denote by  $A^{n}_{1/2,u,v}(q,j)$ and
$A^{n,+}_{1/2,u,v}(q,j)$ the events defined similarly to 
\\$A_{1/2,u,v}(q,j)$ and $A^{+}_{1/2,u,v}(q,j)$ by doing the following replacements:
\begin{itemize}
\item $\mathcal{E}^{\mathbb{H}}_{u}((-\infty,0])$ replaced by 
$\mathcal{E}^{\widetilde{\mathtt{H}}_{n}}_{u}((-\infty,0])$ and
$\mathcal{E}^{\mathbb{H}}_{v}([1,q])$ replaced by 
$\mathcal{E}^{\widetilde{\mathtt{H}}_{n}}_{v}([1,q])$,
\item $Z_{1/2}$ replaced by $Z_{1/2,n}$,
\item $\mathcal{F}_{\rm ext}(\mathcal{L}_{1/2}^{\mathbb{H}})$ replaced by
$\mathcal{F}_{\rm ext}(\mathcal{L}_{1/2}^{\mathtt{H}_{n},n^{\theta}})$ and 
$\mathcal{F}_{\rm ext}(\mathcal{L}_{1/2}^{\mathbb{H}})[z_{1/2,0},z_{1},\dots,z_{j}]$ replaced by
\\$\mathcal{F}_{\rm ext}(\mathcal{L}_{1/2}^{\mathtt{H}_{n},n^{\theta}})[z_{1/2,0},z_{1},\dots,z_{j}]$.
\end{itemize}
$\mathcal{F}_{\rm ext}(\mathcal{L}_{1/2}^{\mathtt{H}_{n},n^{\theta}})
[z_{1/2,0},z_{1},\dots,z_{n}]$ converges in law to $\mathcal{F}_{\rm ext}(\mathcal{L}_{1/2}^{\mathbb{H}})[z_{1/2,0},z_{1},\dots,z_{j}]$, the P.p.p.
$\mathcal{E}^{\widetilde{\mathtt{H}}_{n}}_{u}((-\infty,0])$ to $\mathcal{E}^{\mathbb{H}}_{u}((-\infty,0])$ and
$\mathcal{E}^{\widetilde{\mathtt{H}}_{n}}_{v}([1,q])$ to $\mathcal{E}^{\mathbb{H}}_{v}([1,q])$.
Moreover, in the limit, if an excursion intersects a contour loop in $\mathcal{F}_{\rm ext}(\mathcal{L}_{1/2}^{\mathbb{H}})[z_{1/2,0},z_{1},\dots,z_{j}]$, then a.s.\ it goes inside the interior surrounded by the loop. Thus the intersection still holds for small deformations of the excursion and of the contour. Thus for all $j\geq 1$ we have the convergence
\begin{displaymath}
\lim_{n\rightarrow +\infty}\mathbb{P}(A^{n}_{1/2,u,v}(q,j))=\mathbb{P}(A_{1/2,u,v}(q,j)).
\end{displaymath}
From Lemma \ref{LemSeqSubSeq} follows that
\begin{displaymath}
\lim_{n_{1/2,j}\rightarrow +\infty}\mathbb{P}(A^{n_{1/2,j},+}_{1/2,u,v}(q,j))=
\mathbb{P}(A^{+}_{1/2,u,v}(q,j)).
\end{displaymath}

Each contour of $\mathcal{F}_{\rm ext}(\mathcal{L}_{1/2}^{\mathtt{H}_{n},n^{\theta}})$ is surrounded by a contour of
$\mathcal{F}_{\rm ext}(\mathcal{L}_{1/2}^{\widetilde{\mathtt{H}}_{n}})$ and $Z_{1/2,n}$ belongs to or is surrounded by
$\mathcal{F}_{\rm ext}(\mathcal{L}_{1/2}^{\widetilde{\mathtt{H}}_{n}})(z_{1/2},0)$. Thus, on the event
$A^{n,+}_{1/2,u,v}(q,j)$, an excursion from $\mathcal{E}^{\widetilde{\mathtt{H}}_{n}}_{u}((-\infty,0])$ and one
from $\mathcal{E}^{\widetilde{\mathtt{H}}_{n}}_{v}([1,q])$ either intersect each other or intersect a common contour from
$\mathcal{F}_{\rm ext}(\mathcal{L}_{1/2}^{\widetilde{\mathtt{H}}_{n}})[z_{1/2,0},z_{1},\dots,z_{j}]$. Thus,
\begin{displaymath}
p^{\widetilde{\mathtt{H}}_{n}}_{1/2,u,v}(q)\geq \mathbb{P}(A^{n,+}_{1/2,u,v}(q,j)).
\end{displaymath}
Let $u$ be equal to $u_{0}(1/2)$. Then
\begin{multline*}
p^{\mathbb{H}}_{1/2,u_{0}(1/2),v}(q)=
\lim_{n_{1/2,j}\rightarrow +\infty}
p^{\widetilde{\mathtt{H}}_{n_{1/2,j}}}_{1/2,u_{0}(1/2),v}(q)\geq\\
\lim_{n_{1/2,j}\rightarrow +\infty}\mathbb{P}(A^{n_{1/2,j},+}_{1/2,u_{0}(1/2),v}(q,j))=
\mathbb{P}(A^{+}_{1/2,u_{0}(1/2),v}(q,j)).
\end{multline*}
Taking the limit as $j\rightarrow +\infty$ we get
\begin{multline*}
p^{\mathbb{H}}_{1/2,u_{0}(1/2),v}(q)\geq \lim_{j\rightarrow +\infty}
\mathbb{P}(A^{+}_{1/2,u_{0}(1/2),v}(q,j))=\mathbb{P}(A^{+}_{1/2,u_{0}(1/2),v}(q))
>\\\mathbb{P}(A_{1/2,u_{0}(1/2),v}(q))=p^{\mathbb{H}}_{1/2,u_{0}(1/2),v}(q),
\end{multline*}
which is a contradiction. It follows that $\mathcal{F}_{\rm ext}(\mathcal{L}_{1/2}^{\widetilde{\mathtt{H}}_{n}})$ converges in law to $\mathcal{F}_{\rm ext}(\mathcal{L}_{1/2}^{\mathbb{H}})$.
\end{proof}




\begin{lemma}
\label{LemPositivity}
Let $\alpha\in (0,1/2)$. Let $\bar{\alpha}:=1/2-\alpha$. Let $\mathcal{L}_{\alpha}^{\mathbb{H}}$
and $\mathcal{L}_{\bar{\alpha}}^{\mathbb{H}}$ be independent and let
\begin{displaymath}
\mathcal{L}_{1/2}^{\mathbb{H}}=\mathcal{L}_{\alpha}^{\mathbb{H}}\cup
\mathcal{L}_{\bar{\alpha}}^{\mathbb{H}}.
\end{displaymath}
Let $z\neq \tilde{z}\in\mathbb{H}$.
Let 
$\mathcal{F}_{\rm ext}^{\bullet}(\mathcal{L}_{\bar{\alpha}}^{\mathbb{H}})(\tilde{z})$,
respectively
$\mathcal{F}_{\rm ext}^{\bullet}(\mathcal{L}_{\alpha}^{\mathbb{H}})(z)$,
denote the region surrounded by
$\mathcal{F}_{\rm ext}(\mathcal{L}_{\bar{\alpha}}^{\mathbb{H}})
(\tilde{z})$, 
respectively
$\mathcal{F}_{\rm ext}(\mathcal{L}_{\alpha}^{\mathbb{H}})(z)$,
i.e. the complement in $\mathbb{H}$ 
of the unique unbounded connected component of
$\mathbb{H}\setminus\mathcal{F}_{\rm ext}
(\mathcal{L}_{\bar{\alpha}}^{\mathbb{H}})(\tilde{z})$,
respectively
$\mathbb{H}\setminus\mathcal{F}_{\rm ext}(\mathcal{L}_{\alpha}^{\mathbb{H}})(z)$.
The conditional probability
\begin{displaymath}
\mathbb{P}(\mathcal{F}_{\rm ext}(\mathcal{L}_{1/2}^{\mathbb{H}})(z)\neq
\mathcal{F}_{\rm ext}(\mathcal{L}_{1/2}^{\mathbb{H}})(\tilde{z})\vert 
\mathcal{F}_{\rm ext}(\mathcal{L}_{\alpha}^{\mathbb{H}}),
\mathcal{F}_{\rm ext}(\mathcal{L}_{\bar{\alpha}}^{\mathbb{H}})(\tilde{z}))
\end{displaymath}
is a.s.\ positive on the event
\begin{displaymath}
\mathcal{F}_{\rm ext}^{\bullet}(\mathcal{L}_{\bar{\alpha}}^{\mathbb{H}})(\tilde{z})\cap
\mathcal{F}_{\rm ext}^{\bullet}(\mathcal{L}_{\alpha}^{\mathbb{H}})(z)=\emptyset.
\end{displaymath}
\end{lemma}

\begin{proof}
On the event that 
$\mathcal{F}_{\rm ext}(\mathcal{L}_{\alpha}^{\mathbb{H}})(z)$ 
does not surround $\tilde{z}$ one can choose a continuous path 
$\tilde{\eta}$ joining $\tilde{z}$ to 
$\partial\mathbb{H}=\mathbb{R}\times\lbrace 0\rbrace$ and avoiding
$\mathcal{F}_{\rm ext}(\mathcal{L}_{\alpha}^{\mathbb{H}})(z)$ ($\tilde{\eta}$ is thus random and measurable with respect to
$\mathcal{F}_{\rm ext}(\mathcal{L}_{\alpha}^{\mathbb{H}})(z)$). 
Let $\widetilde{K}$ be the union of 
$\tilde{\eta}$,
$\mathcal{F}_{\rm ext}(\mathcal{L}_{\bar{\alpha}}^{\mathbb{H}})(\tilde{z})$ and
all the contours in $\mathcal{F}_{\rm ext}(\mathcal{L}_{\alpha}^{\mathbb{H}})$ that do intersect either
$\tilde{\eta}$ or
$\mathcal{F}_{\rm ext}(\mathcal{L}_{\bar{\alpha}}^{\mathbb{H}})(\tilde{z})$.
Let $\hbox{Hull}(\widetilde{K})$ be the hull of $\widetilde{K}$, that is to say the complement in $\mathbb{H}$
of the unique unbounded connected component of $\mathbb{H}\setminus\widetilde{K}$.

On the event that 
$\mathcal{F}_{\rm ext}^{\bullet}(\mathcal{L}_{\alpha}^{\mathbb{H}})(z)$ does not intersect
$\mathcal{F}_{\rm ext}^{\bullet}(\mathcal{L}_{\bar{\alpha}}^{\mathbb{H}})(\tilde{z})$, $z$ does not belong to 
$\hbox{Hull}(\widetilde{K})$. One can than choose a path $\eta$ that connects $z$ to $\partial\mathbb{H}$ and avoids 
$\hbox{Hull}(\widetilde{K})$, $\eta$ being random measurable with respect to $\hbox{Hull}(\widetilde{K})$. Let $K$ be the union of 
$\eta$ and all the contours in $\mathcal{F}_{\rm ext}(\mathcal{L}_{\alpha}^{\mathbb{H}})$ that intersects $\eta$. Let 
$\hbox{Hull}(K)$ be the hull of $K$. Figure \ref{CEEnv} is an illustration of $\tilde{\eta}$, $\eta$, $\widetilde{K}$ and $K$.

\begin{figure}[ht]
\begin{center}
  \includegraphics[scale=0.5]{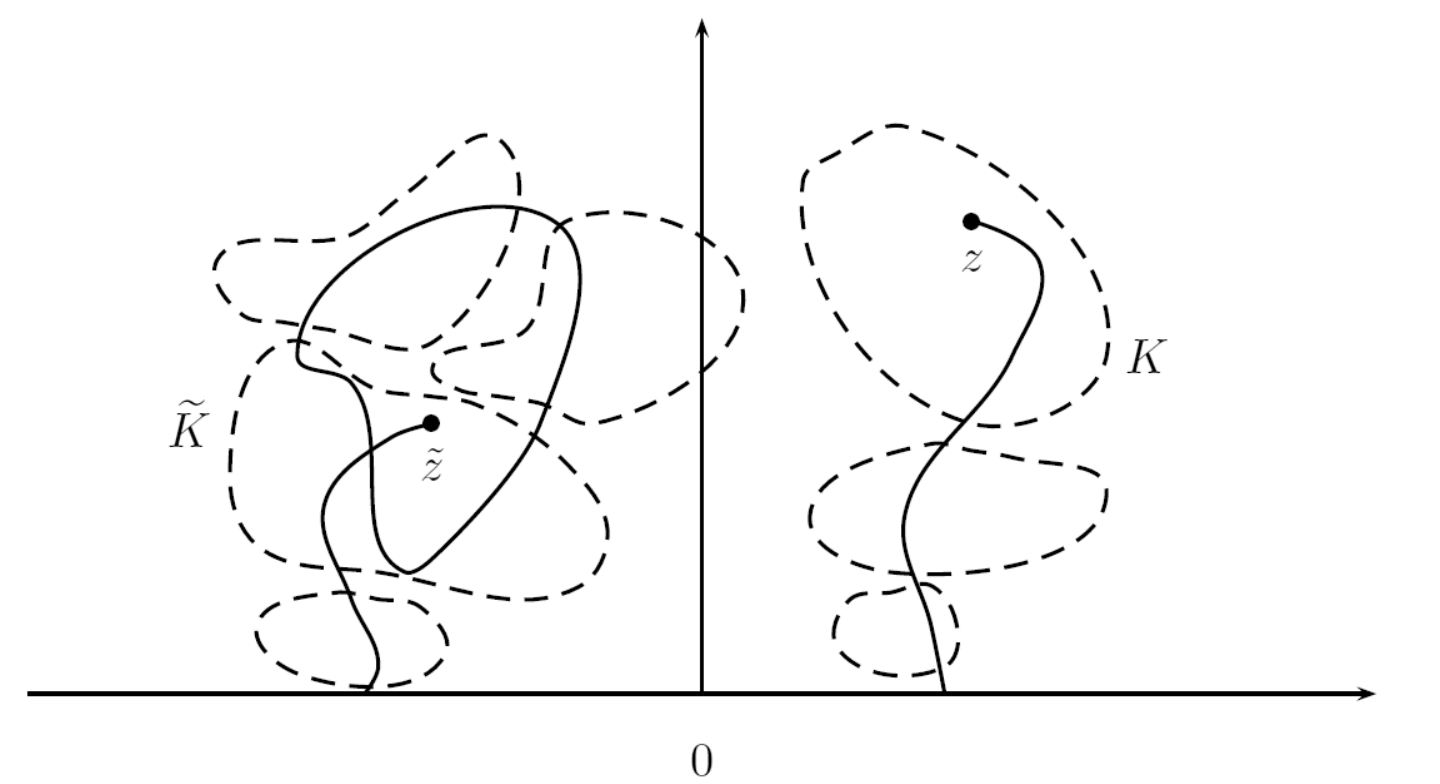}
  \end{center}
  \caption{Illustration of $\tilde{\eta}$, $\eta$, $\widetilde{K}$ and $K$.
$\tilde{\eta}$, $\eta$ and $\mathcal{F}_{\rm ext}(\mathcal{L}_{\bar{\alpha}}^{\mathbb{H}})(\tilde{z})$ 
are drawn in full lines.
Elements of
$\mathcal{F}_{\rm ext}(\mathcal{L}_{\alpha}^{\mathbb{H}})$ 
are drawn in dashed lines.
  }
  \label{CEEnv}
\end{figure}

By construction, on the event 
\begin{displaymath}
\mathcal{F}_{\rm ext}^{\bullet}(\mathcal{L}_{\bar{\alpha}}^{\mathbb{H}})(\tilde{z})\cap
\mathcal{F}_{\rm ext}^{\bullet}(\mathcal{L}_{\alpha}^{\mathbb{H}})(z)=\emptyset,
\end{displaymath}
we have
\begin{itemize}
\item $\hbox{Hull}(K)\cap \hbox{Hull}(\widetilde{K})=\emptyset$,
\item $\mathbb{H}\setminus(\hbox{Hull}(K)\cup \hbox{Hull}(\widetilde{K}))$ is simply connected,
\item no Brownian loop from $\mathcal{L}_{\alpha}^{\mathbb{H}}$ crosses the boundary of 
$\hbox{Hull}(K)$ or $\hbox{Hull}(\widetilde{K})$ and in particular a contour in 
$\mathcal{F}_{\rm ext}(\mathcal{L}_{\alpha}^{\mathbb{H}})$ is either inside $\hbox{Hull}(K)$, $\hbox{Hull}(\widetilde{K})$ or inside the complement $\mathbb{H}\setminus(\hbox{Hull}(K)\cup \hbox{Hull}(\widetilde{K}))$.
\end{itemize}

Conditional on 
\begin{displaymath}
\mathcal{F}_{\rm ext}^{\bullet}(\mathcal{L}_{\bar{\alpha}}^{\mathbb{H}})(\tilde{z})\cap
\mathcal{F}_{\rm ext}^{\bullet}(\mathcal{L}_{\alpha}^{\mathbb{H}})(z)=\emptyset,
\end{displaymath}
and on $\hbox{Hull}(K)$, $\hbox{Hull}(\widetilde{K})$, the law of the contours 
$\mathcal{F}_{\rm ext}(\mathcal{L}_{1/2}^{\mathbb{H}\setminus(\text{Hull}(K)\cup \text{Hull}(\widetilde{K}))})$,
created by the loops $\mathcal{L}_{1/2}^{\mathbb{H}\setminus(\text{Hull}(K)\cup \text{Hull}(\widetilde{K}))}$ from
$\mathcal{L}_{1/2}^{\mathbb{H}}$ that stay inside 
$\mathbb{H}\setminus(\hbox{Hull}(K)\cup \hbox{Hull}(\widetilde{K}))$,
is a $\hbox{CLE}_{4}$ inside
$\mathbb{H}\setminus(\hbox{Hull}(K)\cup \hbox{Hull}(\widetilde{K}))$, and they are conditionally independent from
$\mathcal{L}_{1/2}^{\mathbb{H}}\setminus
\mathcal{L}_{1/2}^{\mathbb{H}\setminus(\text{Hull}(K)\cup \text{Hull}(\widetilde{K}))}$.

Conditional on the event
\begin{displaymath}
\mathcal{F}_{\rm ext}^{\bullet}(\mathcal{L}_{\bar{\alpha}}^{\mathbb{H}})(\tilde{z})\cap
\mathcal{F}_{\rm ext}^{\bullet}(\mathcal{L}_{\alpha}^{\mathbb{H}})(z)=\emptyset
\end{displaymath}
and on $\hbox{Hull}(K)$, $\hbox{Hull}(\widetilde{K})$,
$\mathcal{F}_{\rm ext}(\mathcal{L}_{\alpha}^{\mathbb{H}})$,
$\mathcal{F}_{\rm ext}(\mathcal{L}_{\bar{\alpha}}^{\mathbb{H}})(\tilde{z})$, the probability that
\begin{displaymath}
\mathcal{F}_{\rm ext}(\mathcal{L}_{1/2}^{\mathbb{H}})(z)=
\mathcal{F}_{\rm ext}(\mathcal{L}_{1/2}^{\mathbb{H}})(\tilde{z})
\end{displaymath} 
is less or equal to the probability that $\hbox{Hull}(K)$ and $\hbox{Hull}(\widetilde{K})$ are connected by a cluster of
$\mathcal{L}_{1/2}^{\mathbb{H}}$, which is less or equal to the probability that
given the contours
$\mathcal{F}_{\rm ext}(\mathcal{L}_{1/2}^{\mathbb{H}\setminus(\text{Hull}(K)\cup \text{Hull}(\widetilde{K}))})$ and an independent
loop-soup in $\mathbb{H}$ of parameter $\bar{\alpha}$, there is a contour $\Gamma$ and two loops $\gamma_{1}$ and 
$\gamma_{2}$ in the loop-soup of intensity $\bar{\alpha}$ such that
\begin{itemize}
\item $\gamma_{1}$ intersects $\Gamma$ and $\hbox{Hull}(K)$,
\item $\gamma_{2}$ intersects $\Gamma$ and $\hbox{Hull}(\widetilde{K})$.
\end{itemize}
The latter conditional probability is a.s.\ strictly smaller than $1$. This is what we needed to prove.
\end{proof}

\begin{thm}
\label{ThmConvAll} Let $\alpha\in (0,1/2)$.
$\mathcal{F}_{\rm ext}(\mathcal{L}_{\alpha}^{\mathtt{H}_{n}})$ and 
$\mathcal{F}_{\rm ext}(\mathcal{L}_{\alpha}^{\widetilde{\mathtt{H}}_{n}})$ converge in law as
$n\rightarrow +\infty$ to $\mathcal{F}_{\rm ext}(\mathcal{L}_{\alpha}^{\mathbb{H}})$, that is to say to a 
$\hbox{CLE}_{\kappa(\alpha)}$ on $\mathbb{H}$.
\end{thm}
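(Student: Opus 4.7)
The plan is to adapt the contradiction scheme of Theorem \ref{ThmConvOneHalf}, using the independent decomposition $\mathcal{L}_{1/2}^{\widetilde{\mathtt{H}}_{n}} = \mathcal{L}_{\alpha}^{\widetilde{\mathtt{H}}_{n}} \cup \mathcal{L}_{\bar{\alpha}}^{\widetilde{\mathtt{H}}_{n}}$ with $\bar{\alpha} := 1/2 - \alpha$, together with Lemma \ref{LemPositivity} as the required strict inequality. Exactly as at the beginning of the proof of Theorem \ref{ThmConvOneHalf}, it is enough to establish the convergence of the metric-graph family $\mathcal{F}_{\rm ext}(\mathcal{L}_{\alpha}^{\widetilde{\mathtt{H}}_{n}})$, since each discrete contour $\mathcal{F}_{\rm ext}(\mathcal{L}_{\alpha}^{\mathtt{H}_{n}})(z)$ is sandwiched between the cutoff contour $\mathcal{F}_{\rm ext}(\mathcal{L}_{\alpha}^{\mathtt{H}_{n},n^{\theta}})(z)$ (which converges by Lemma \ref{LemConvCutOffHP}) and the metric-graph contour $\mathcal{F}_{\rm ext}(\mathcal{L}_{\alpha}^{\widetilde{\mathtt{H}}_{n}})(z)$.

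Assume for contradiction that $\mathcal{F}_{\rm ext}(\mathcal{L}_{\alpha}^{\widetilde{\mathtt{H}}_{n}})$ does not converge in law to $\mathcal{F}_{\rm ext}(\mathcal{L}_{\alpha}^{\mathbb{H}})$. The statements and proofs of Lemmas \ref{LemPosDist}, \ref{LemConvDistPoint} and \ref{LemSeqSubSeq} never invoke the specific value $\alpha = 1/2$ and apply verbatim; they produce a point $z_{\alpha,0} \in \mathbb{H}$, a subsequence of indices $n_{\alpha,j}$, and a joint limit law $(\mathcal{F}_{\rm ext}(\mathcal{L}_{\alpha}^{\mathbb{H}}), Z_\alpha)$ for which, with positive probability, the point $Z_\alpha$ lies at positive distance from $\mathcal{F}_{\rm ext}(\mathcal{L}_{\alpha}^{\mathbb{H}})(z_{\alpha,0})$ and is not surrounded by it.

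I would then enlarge the construction by an independent metric-graph loop-soup $\mathcal{L}_{\bar{\alpha}}^{\widetilde{\mathtt{H}}_{n}}$, so that $\mathcal{L}_{1/2}^{\widetilde{\mathtt{H}}_{n}} := \mathcal{L}_{\alpha}^{\widetilde{\mathtt{H}}_{n}} \cup \mathcal{L}_{\bar{\alpha}}^{\widetilde{\mathtt{H}}_{n}}$ is a critical loop-soup. The crucial discrete observation is that adding more loops only merges clusters, so the outer contour $\mathcal{F}_{\rm ext}(\mathcal{L}_{1/2}^{\widetilde{\mathtt{H}}_{n}})(z_{\alpha,0})$ deterministically surrounds $\mathcal{F}_{\rm ext}(\mathcal{L}_{\alpha}^{\widetilde{\mathtt{H}}_{n}})(z_{\alpha,0})$, and in particular surrounds $Z_{\alpha,n}$. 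Passing to a further subsequence and combining the joint convergence of the $\alpha$, $\bar{\alpha}$ and (by Theorem \ref{ThmConvOneHalf}) $1/2$ contour families with the convergence of $Z_{\alpha,n}$ to $Z_\alpha$, one obtains a joint limit in which $\mathcal{L}_{\alpha}^{\mathbb{H}}$ and $\mathcal{L}_{\bar{\alpha}}^{\mathbb{H}}$ are independent, $\mathcal{L}_{1/2}^{\mathbb{H}}$ is their union, and $\mathcal{F}_{\rm ext}(\mathcal{L}_{1/2}^{\mathbb{H}})(z_{\alpha,0})$ surrounds $Z_\alpha$ almost surely.

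To contradict this last statement I would invoke Lemma \ref{LemPositivity}. Using the dense sequence $(z_j)_{j \geq 1}$ of Lemma \ref{LemSeqSubSeq}, a covering argument yields $j \geq 1$ and $\delta > 0$ such that the event $F_{j,\delta} := \lbrace Z_\alpha \in B(z_j, \delta) \rbrace \cap \lbrace \overline{B(z_j, \delta)} \cap \mathcal{F}_{\rm ext}(\mathcal{L}_{\alpha}^{\mathbb{H}})(z_{\alpha,0}) = \emptyset \rbrace$ has positive probability. On $F_{j,\delta}$ I would apply Lemma \ref{LemPositivity} with $z = z_{\alpha,0}$ and $\tilde{z} = z_j$: since $\mathcal{L}_{\bar{\alpha}}^{\mathbb{H}}$ is independent of $(\mathcal{F}_{\rm ext}(\mathcal{L}_{\alpha}^{\mathbb{H}}), Z_\alpha)$, the conditional probability that $\mathcal{F}_{\rm ext}(\mathcal{L}_{\bar{\alpha}}^{\mathbb{H}})(z_j) \subset B(z_j, \delta)$ is strictly positive, and on this further event Lemma \ref{LemPositivity} gives a strictly positive conditional probability that $z_j$ and $z_{\alpha,0}$ lie in distinct contours of $\mathcal{F}_{\rm ext}(\mathcal{L}_{1/2}^{\mathbb{H}})$; then $\mathcal{F}_{\rm ext}(\mathcal{L}_{1/2}^{\mathbb{H}})(z_{\alpha,0})$ does not surround $z_j$, and by shrinking $\delta$ and exploiting that a $\hbox{CLE}_4$ contour a.s.\ avoids any fixed point, it does not surround the nearby $Z_\alpha$ either, contradicting the preceding paragraph. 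The main obstacle I expect is this last geometric transfer from the fixed $z_j$ to the random $Z_\alpha$, which requires some care with Hausdorff continuity of the contours but introduces no new probabilistic input.
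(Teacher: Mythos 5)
Your overall strategy is the paper's own: reduce to the metric-graph family via the sandwiching of contours, run the contradiction through Lemmas \ref{LemPosDist}, \ref{LemConvDistPoint} and \ref{LemSeqSubSeq} (which indeed apply verbatim for $\alpha\in(0,1/2]$), superpose an independent $\mathcal{L}_{\bar{\alpha}}^{\widetilde{\mathtt{H}}_{n}}$ to reach the critical soup, and invoke Lemma \ref{LemPositivity} for the strict inequality. The first half of your argument is sound: the relation ``$Z_{\alpha,n}$ is contained in or surrounded by $\mathcal{F}_{\rm ext}(\mathcal{L}_{1/2}^{\widetilde{\mathtt{H}}_{n}})(z_{\alpha,0})$'' is a closed condition under joint Hausdorff convergence of the contour and convergence of the point, so in the subsequential limit $Z_{\alpha}$ a.s.\ lies in the hull of $\mathcal{F}_{\rm ext}(\mathcal{L}_{1/2}^{\mathbb{H}})(z_{\alpha,0})$.

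The gap is in your last step. Lemma \ref{LemPositivity} gives you, with positive conditional probability, $\mathcal{F}_{\rm ext}(\mathcal{L}_{1/2}^{\mathbb{H}})(z_{\alpha,0})\neq \mathcal{F}_{\rm ext}(\mathcal{L}_{1/2}^{\mathbb{H}})(z_{j})$, hence that the former contour does not surround the fixed point $z_{j}$. But this does not prevent it from surrounding the nearby \emph{random} point $Z_{\alpha}$: nothing stops $\mathcal{F}_{\rm ext}(\mathcal{L}_{1/2}^{\mathbb{H}})(z_{\alpha,0})$ from entering $B(z_{j},\delta)$ and passing between $z_{j}$ and $Z_{\alpha}$. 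Your proposed fix --- that a $\hbox{CLE}_{4}$ contour a.s.\ avoids any fixed point, plus shrinking $\delta$ --- does not address this, because $Z_{\alpha}$ is correlated with $\mathcal{L}_{1/2}^{\mathbb{H}}$ and ``not surrounding $z_{j}$'' is not a statement about a neighbourhood of $z_{j}$. The paper closes exactly this gap by making the auxiliary contour do the shielding: it works on the event $E_{4}$ that $\mathcal{F}_{\rm ext}(\mathcal{L}_{\bar{\alpha}}^{\mathbb{H}})(\tilde{z})$ itself \emph{surrounds} $Z_{\alpha}$ while being disjoint from $\mathcal{F}_{\rm ext}(\mathcal{L}_{\alpha}^{\mathbb{H}})(z_{\alpha,0})$. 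Then $Z_{\alpha}$ lies in the hull of $\mathcal{F}_{\rm ext}(\mathcal{L}_{1/2}^{\mathbb{H}})(\tilde{z})$, and since distinct outermost contours are disjoint and non-nested they cannot both contain $Z_{\alpha}$ in their hulls; so on $E_{4}\cap\bar{A}_{\alpha}$ the contour of $z_{\alpha,0}$ fails to surround $Z_{\alpha}$, while in the discrete picture the same surrounding event forces $A^{n}_{\alpha}$ because $Z_{\alpha,n}$ sits on $\mathcal{F}_{\rm ext}(\mathcal{L}_{\alpha}^{\widetilde{\mathtt{H}}_{n}})(z_{\alpha,0})$. Your argument can be repaired, but only by additionally requiring $\mathcal{F}_{\rm ext}(\mathcal{L}_{\bar{\alpha}}^{\mathbb{H}})(z_{j})$ to surround $Z_{\alpha}$ (not merely to be small), which is precisely the paper's device; as written, the transfer from $z_{j}$ to $Z_{\alpha}$ fails.
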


\begin{proof}
As for Theorem \ref{ThmConvOneHalf}, it is enough to prove that 
$\mathcal{F}_{\rm ext}(\mathcal{L}_{\alpha}^{\widetilde{\mathtt{H}}_{n}})$ converges in law to
$\mathcal{F}_{\rm ext}(\mathcal{L}_{\alpha}^{\mathbb{H}})$. Let us assume that this is not the case.
Let $z_{\alpha,0}$ be the point and $n_{\alpha,0}$ the sub-sequence defined by Lemma \ref{LemPosDist}. 
We also consider the joint law of 
$(\mathcal{F}_{\rm ext}(\mathcal{L}_{\alpha}^{\mathbb{H}}),Z_{\alpha})$ defined by Lemma \ref{LemSeqSubSeq}. 

Since
\begin{displaymath}
\lim_{z\rightarrow\infty}
\mathbb{P}\left(\mathcal{F}_{\rm ext}(\mathcal{L}_{\alpha}^{\mathbb{H}})(z_{\alpha,0})=
\mathcal{F}_{\rm ext}(\mathcal{L}_{\alpha}^{\mathbb{H}})(z)\right)=0
\end{displaymath}
and
\begin{displaymath}
\mathbb{P}\left(d(Z_{\alpha},
\mathcal{F}_{\rm ext}(\mathcal{L}_{\alpha}^{\mathbb{H}})(z_{\alpha,0}))>0\right)>0,
\end{displaymath}
we can choose $\tilde{z}\in\mathbb{H}$ such that
\begin{displaymath}
\mathbb{P}\left(\mathcal{F}_{\rm ext}(\mathcal{L}_{\alpha}^{\mathbb{H}})(z_{\alpha,0})=
\mathcal{F}_{\rm ext}(\mathcal{L}_{\alpha}^{\mathbb{H}})(\tilde{z})\right)<
\mathbb{P}\left(d(Z_{\alpha},
\mathcal{F}_{\rm ext}(\mathcal{L}_{\alpha}^{\mathbb{H}})(z_{\alpha,0}))>0\right).
\end{displaymath}
In that way
\begin{displaymath}
\mathbb{P}\left(d(Z_{\alpha},
\mathcal{F}_{\rm ext}(\mathcal{L}_{\alpha}^{\mathbb{H}})(z_{\alpha,0}))>0,
\mathcal{F}_{\rm ext}(\mathcal{L}_{\alpha}^{\mathbb{H}})(z_{\alpha,0})\neq
\mathcal{F}_{\rm ext}(\mathcal{L}_{\alpha}^{\mathbb{H}})(\tilde{z})
\right)>0.
\end{displaymath}

Let $\bar{\alpha}:=1/2-\alpha$. We take $\mathcal{L}_{\bar{\alpha}}^{\mathbb{H}}$ independent from
$(\mathcal{L}_{\alpha}^{\mathbb{H}},Z_{\alpha})$ and $\mathcal{L}_{\bar{\alpha}}^{\widetilde{\mathtt{H}}_{n}}$ independent from $(\mathcal{L}_{\alpha}^{\widetilde{\mathtt{H}}_{n}},Z_{\alpha,n})$. We define
$\mathcal{L}_{1/2}^{\mathbb{H}}$ and 
$\mathcal{L}_{1/2}^{\widetilde{\mathtt{H}}_{n}}$ as unions of two independent Poisson point processes:
\begin{displaymath}
\mathcal{L}_{1/2}^{\mathbb{H}}=\mathcal{L}_{\alpha}^{\mathbb{H}}\cup
\mathcal{L}_{\bar{\alpha}}^{\mathbb{H}},\qquad
\mathcal{L}_{1/2}^{\widetilde{\mathtt{H}}_{n}}=\mathcal{L}_{\alpha}^{\widetilde{\mathtt{H}}_{n}}\cup
\mathcal{L}_{\bar{\alpha}}^{\widetilde{\mathtt{H}}_{n}}.
\end{displaymath}

Let $A_{\alpha}$ be the event defined by $\mathcal{F}_{\rm ext}(\mathcal{L}_{1/2}^{\mathbb{H}})(z_{\alpha,0})=
\mathcal{F}_{\rm ext}(\mathcal{L}_{1/2}^{\mathbb{H}})(\tilde{z})$.
Let $A^{+}_{\alpha}$ be the event which holds if one of the below conditions is satisfied:
\begin{itemize}
\item $\mathcal{F}_{\rm ext}(\mathcal{L}_{1/2}^{\mathbb{H}})(z_{\alpha,0})=
\mathcal{F}_{\rm ext}(\mathcal{L}_{1/2}^{\mathbb{H}})(\tilde{z})$,
\item $\mathcal{F}_{\rm ext}(\mathcal{L}_{\bar{\alpha}}^{\mathbb{H}})(\tilde{z})$ surrounds
$Z_{\alpha}$.
\end{itemize}

Figure \ref{CEAplus2} is an illustration of $A^{+}_{\alpha}\setminus A_{\alpha}$.

\begin{figure}[ht]
\begin{center}
  \includegraphics[scale=0.5]{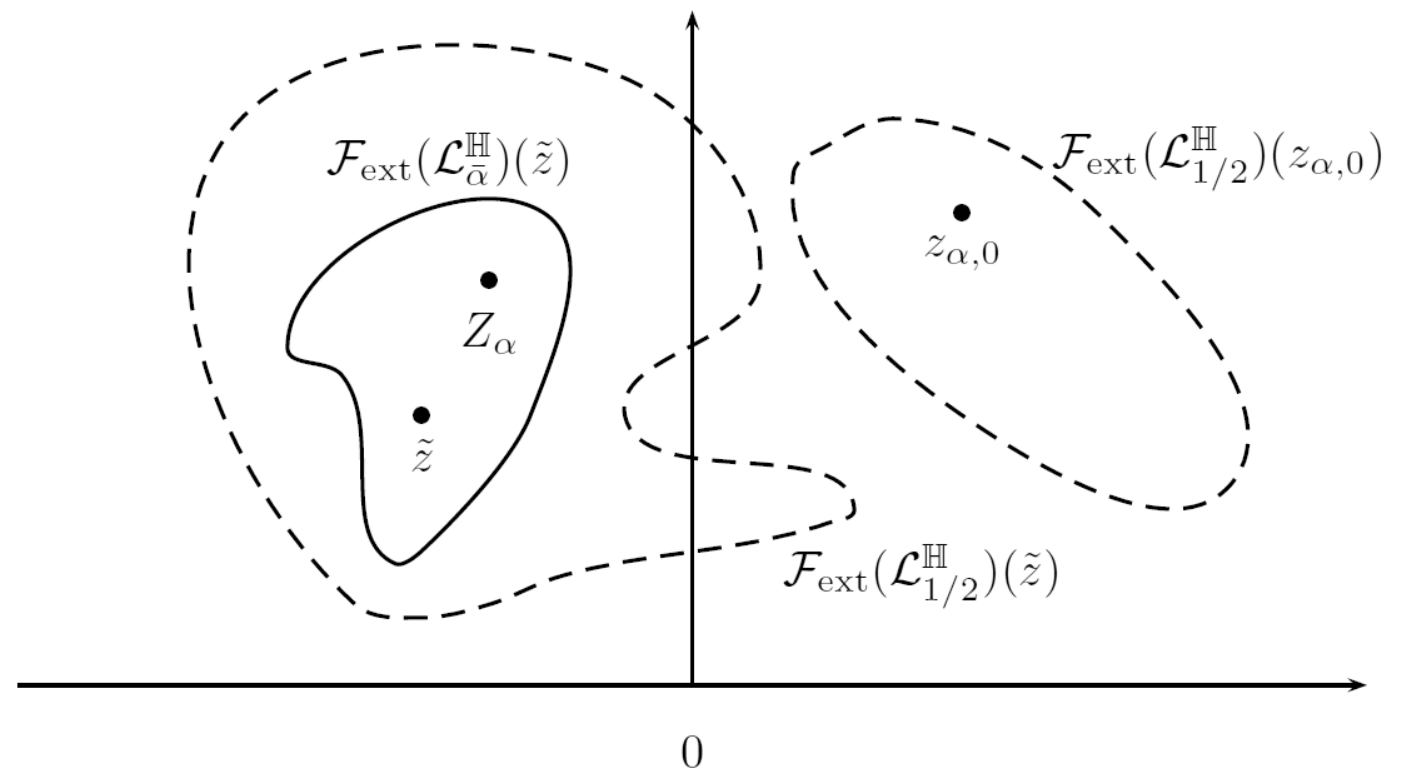}
  \end{center}
  \caption{Illustration of $A^{+}_{\alpha}\setminus A_{\alpha}$.}
  \label{CEAplus2}
\end{figure}

Let us show that
$\mathbb{P}(A^{+}_{\alpha}\setminus A_{\alpha})>0$. Let $E_{4}$ be the event defined by the following four conditions:
\begin{itemize}
\item $d(Z_{\alpha},
\mathcal{F}_{\rm ext}(\mathcal{L}_{\alpha}^{\mathbb{H}})(z_{\alpha,0}))>0$,
\item $\mathcal{F}_{\rm ext}(\mathcal{L}_{\alpha}^{\mathbb{H}})(z_{\alpha,0})\neq
\mathcal{F}_{\rm ext}(\mathcal{L}_{\alpha}^{\mathbb{H}})(\tilde{z})$,
\item $\mathcal{F}_{\rm ext}(\mathcal{L}_{\bar{\alpha}}^{\mathbb{H}})(\tilde{z})$ surrounds
$Z_{\alpha}$,
\item $\mathcal{F}_{\rm ext}^{\bullet}(\mathcal{L}_{\bar{\alpha}}^{\mathbb{H}})(\tilde{z})\cap
\mathcal{F}_{\rm ext}^{\bullet}(\mathcal{L}_{\alpha}^{\mathbb{H}})(z_{\alpha,0})=\emptyset$.
\end{itemize}
It has positive probability because of our choice of $\tilde{z}$ and the independence of
$\mathcal{F}_{\rm ext}(\mathcal{L}_{\bar{\alpha}}^{\mathbb{H}})(\tilde{z})$ from
$(\mathcal{F}_{\rm ext}(\mathcal{L}_{\alpha}^{\mathbb{H}}),Z_{\alpha})$.
Let $\bar{A}_{\alpha}$ be the complement of $A_{\alpha}$. $\bar{A}_{\alpha}$ and $E_{4}$ are independent conditional on
$(\mathcal{F}_{\rm ext}(\mathcal{L}_{\alpha}^{\mathbb{H}}),
\mathcal{F}_{\rm ext}(\mathcal{L}_{\bar{\alpha}}^{\mathbb{H}})(\tilde{z}))$. Thus
\begin{displaymath}
\mathbb{P}(A^{+}_{\alpha}\setminus A_{\alpha})=\mathbb{P}(E_{4},\bar{A}_{\alpha})=
\mathbb{E}[1_{E_{4}}\mathbb{P}(\bar{A}_{\alpha}\vert 
\mathcal{F}_{\rm ext}(\mathcal{L}_{\alpha}^{\mathbb{H}}),
\mathcal{F}_{\rm ext}(\mathcal{L}_{\bar{\alpha}}^{\mathbb{H}})(\tilde{z}))].
\end{displaymath}
According to Lemma \ref{LemPositivity}, 
$\mathbb{P}(\bar{A}_{\alpha}\vert 
\mathcal{F}_{\rm ext}(\mathcal{L}_{\alpha}^{\mathbb{H}}),
\mathcal{F}_{\rm ext}(\mathcal{L}_{\bar{\alpha}}^{\mathbb{H}})(\tilde{z}))$ is a.s.\ positive on the event $E_{4}$.
It follows that $\mathbb{P}(A^{+}_{\alpha}\setminus A_{\alpha})>0$.

Let $A^{n}_{\alpha}$ and $A^{n,+}_{\alpha}$ be the events defined similarly to $A_{\alpha}$ and $A^{+}_{\alpha}$ where
the contours $\mathcal{F}_{\rm ext}(\mathcal{L}_{1/2}^{\mathbb{H}})(z_{\alpha,0})$, 
$\mathcal{F}_{\rm ext}(\mathcal{L}_{1/2}^{\mathbb{H}})(\tilde{z})$ and
$\mathcal{F}_{\rm ext}(\mathcal{L}_{\bar{\alpha}}^{\mathbb{H}})(\tilde{z})$ are replaced by
\\$\mathcal{F}_{\rm ext}(\mathcal{L}_{1/2}^{\widetilde{\mathtt{H}}_{n}})(z_{\alpha,0})$, 
$\mathcal{F}_{\rm ext}(\mathcal{L}_{1/2}^{\widetilde{\mathtt{H}}_{n}})(\tilde{z})$ and
$\mathcal{F}_{\rm ext}(\mathcal{L}_{\bar{\alpha}}^{\widetilde{\mathtt{H}}_{n}})(\tilde{z})$ respectively and
$Z_{\alpha}$ is replaced by $Z_{\alpha,n}$. Since $Z_{\alpha,n}$ is on the contour
$\mathcal{F}_{\rm ext}(\mathcal{L}_{\alpha}^{\widetilde{\mathtt{H}}_{n}})(z_{\alpha,0})$ we have the equality
$A^{n,+}_{\alpha}=A^{n}_{\alpha}$. From Theorem \ref{ThmConvOneHalf} follows that
\begin{displaymath}
\lim_{n\rightarrow +\infty}\mathbb{P}(A^{n}_{\alpha})=\mathbb{P}(A_{\alpha}).
\end{displaymath}
On the other hand
\begin{displaymath}
\liminf_{n_{\alpha,0}\rightarrow +\infty}\mathbb{P}(A^{n_{\alpha,0},+}_{\alpha})\geq\mathbb{P}(A^{+}_{\alpha})>
\mathbb{P}(A_{\alpha}),
\end{displaymath}
which is a contradiction. It follows that $\mathcal{F}_{\rm ext}(\mathcal{L}_{\alpha}^{\widetilde{\mathtt{H}}_{n}})$ converges in law to $\mathcal{F}_{\rm ext}(\mathcal{L}_{\alpha}^{\mathbb{H}})$.
\end{proof}

\section*{Acknowledgements}

This research was supported by Université Paris-Sud, Orsay.

\bigskip

The author thanks Wendelin Werner for explaining the theory of restriction measures and pointing out the $1/2$ factor in the relation between the loop-soup intensity parameter and the central charge.


\begin{thebibliography}{plain}






\bibitem{ASW2017BTLS}
J.~Aru, A.~Sepúlveda, and W.~Werner.
\newblock On bounded-type thin local sets of the two-dimensional {G}aussian free field.
\newblock {\em J. Inst. Math. Jussieu}:1--28, 2017.

\bibitem{BrugCamiaLis2014RWLoopsCLE}
T.~Van de~Brug, F.~Camia, and M.~Lis.
\newblock Random walk loop soups and conformal loop ensembles.
\newblock {\em Probab. Theory Related Fields}, 166:553--584, 2016.

\bibitem{LeJan2011Loops}
Y.~Le Jan.
\newblock Markov paths, loops and fields.
\newblock In {\em 2008 St-Flour summer school, L.N. Math.}, volume 2026.
  Springer, 2011.

\bibitem{LawlerFerreras2007RWLoopSoup}
G.~F. Lawler and J.~A. Trujillo-Ferreras.
\newblock Random walk loop soup.
\newblock {\em Trans. Amer. Math. Soc.}, 359(2):767--787, 2007.

\bibitem{LawlerWerner2004ConformalLoopSoup}
G.~F. Lawler and W.~Werner.
\newblock The {B}rownian loop-soup.
\newblock {\em Probab. Theory Related Fields}, 128:565--588, 2004.

\bibitem{Lawler2009PartFuncSLELoopMes}
G.F. Lawler.
\newblock Partition functions, loop measure, and versions of {S}{L}{E}.
\newblock {\em J. Stat. Phys.}, 134:813--837, 2009.

\bibitem{LawlerLimic2010RW}
G.F. Lawler and V.~Limic.
\newblock {\em Random walk: a modern introduction}, volume 123 of {\em
  Cambridge Stud. Adv. Math.}
\newblock Cambridge University Press, 1st edition, 2010.

\bibitem{LeJanMarcusRosen2012Loops}
Y.~{Le Jan}, M.B. Marcus, and J.~Rosen.
\newblock Permanental fields, loop soups and continuous additive functionals.
\newblock {\em Ann. Probab.}, 43(1):44--84, 2015.

\bibitem{Lupu2014LoopsGFF}
T.~Lupu.
\newblock From loop clusters and random interlacements to the free field.
\newblock {\em Ann. Probab.}, 44(3):2117--2146, 2016.

\bibitem{Lupu2014LoopsHalfPlane}
T.~Lupu.
\newblock Loop percolation on discrete half-plane.
\newblock {\em Electron. Commun. Probab.}, 21(30), 2016.

\bibitem{MillerSheffieldCLE4GFF}
J.~Miller and S.~Sheffield.
\newblock {C}{L}{E}(4) and the {G}aussian free field.
\newblock In preparation.

\bibitem{SchrammSheffield2009ContourDiscrGFF}
O.~Schramm and S.~Sheffield.
\newblock Contour lines of the two-dimensional discrete {G}aussian free field.
\newblock {\em Acta Math.}, 202:21--137, 2009.

\bibitem{SchrammSheffield2013ContourContGFF}
O.~Schramm and S.~Sheffield.
\newblock A contour line of the continuum {G}aussian free field.
\newblock {\em Probab. Theory Related Fields}, 157:47--80, 2013.

\bibitem{SheffieldWerner2012CLE}
S.~Sheffield and W.~Werner.
\newblock Conformal loop ensembles: the {M}arkovian characterization and the
  loop-soup construction.
\newblock {\em Ann. of Math.}, 176(3):1827--1917, 2012.

\bibitem{WangWu2014LvlLineGFF}
M.~Wang and H.~Wu.
\newblock Level lines of {G}aussian free field {I}: zero-boundary {G}{F}{F}.
\newblock {\em Stochastic Process. Appl.}, 127(4):1045--1124 , 2017.

\bibitem{Werner2003SLeasBound}
W.~Werner.
\newblock {S}{L}{E}s as boundaries of clusters of {B}rownian loops.
\newblock {\em C.R. Acad. Sci. Paris}, 337:481--486, 2003.

\bibitem{Werner2004SLEStFlour}
W.~Werner.
\newblock Random planar curves and {S}chramm-{L}oewner {E}volutions.
\newblock In {\em 2002 St-Flour summer school, L.N. Math.}, volume 1840.
  Springer, 2004.

\bibitem{Werner2005ConfRestr}
W.~Werner.
\newblock Conformal restriction and related questions.
\newblock {\em Probab. Surv.}, 2:145--190, 2005.

\bibitem{WernerWu2013FromCLEtoSLE}
W.~Werner and H.~Wu.
\newblock From {C}{L}{E}$(\kappa)$ to {S}{L}{E}$(\kappa,\rho)$'s.
\newblock {\em Electron. J. Probab.}, 18:1--20, 2013.
\end{thebibliography}
\end{document}